\newcommand{\ts}{\textsuperscript}
\definecolor{darkblue}{rgb}{0.0,0,0.7} 
\definecolor{darkred}{rgb}{0.7,0,0} 
\def\defn#1{{\sf #1}}
\newcommand{\RR}{\mathbb R}
\newcommand{\ZZ}{\mathbb Z}
\newcommand{\spanz}{\mathrm{span}_{\mathbb{Z}}}
\newcommand{\spanr}{\mathrm{span}_{\mathbb{R}}}
\newcommand{\lt}{\ell_{T}}
\newenvironment{patrick}
{\color{red}\begin{quote}\baselineskip=10pt\noindent
P: $\blacktriangleright$\ \small\sf} 
{\hskip1em\hbox{}\nobreak\hfill$\blacktriangleleft$\par
\end{quote}}
\newenvironment{thomas}
{\color{orange}\begin{quote}\baselineskip=10pt\noindent
T: $\blacktriangleright$\ \small\sf} 
{\hskip1em\hbox{}\nobreak\hfill$\blacktriangleleft$\par
\end{quote}}
\DeclareMathOperator{\Red}{Red}
\DeclareMathOperator{\Det}{det}
\DeclareMathOperator{\Rank}{rk}
\DeclareMathOperator{\idop}{id}
\DeclareMathOperator{\stab}{Stab}
\DeclareMathOperator{\Vol}{vol}
\newtheorem{theorem}{Theorem}[section]
\newtheorem{corollary}[theorem]{Corollary}
\newtheorem{Proposition}[theorem]{Proposition}
\newtheorem{Lemma}[theorem]{Lemma}
\theoremstyle{definition}
\newtheorem{Definition}[theorem]{Definition}
\newtheorem{remark}[theorem]{Remark}
\newtheorem{example}[theorem]{Example}
\newtheorem{Notation}[theorem]{Notation}
\newtheorem{question}[theorem]{Question}
\title[On the Hurwitz action in finite Coxeter groups]{On the Hurwitz action in finite Coxeter groups}
\author[B.~Baumeister]{Barbara Baumeister}
\address{Barbara Baumeister, Universit\"at Bielefeld, Germany}
\email{b.baumeister@math.uni-bielefeld.de}
\thanks{}
\author[T.~Gobet]{Thomas Gobet}
\address{Thomas Gobet, Technische Universit\"at Kaiserslautern, Germany}
{\color{red}\email{gobet@mathematik.uni-kl.de}}
\thanks{}
\author[K.~Roberts]{Kieran Roberts}
\address{Kieran Roberts, Universit\"at Bielefeld, Germany}
\email{kroberts@math.uni-bielefeld.de}
\thanks{}
\author[P.~Wegener]{Patrick Wegener}
\address{Patrick Wegener, Universit\"at Bielefeld, Germany}
\email{pwegener@math.uni-bielefeld.de}
\date{\today}
\begin{document}

\begin{abstract}
We provide a necessary and sufficient condition on an element of a finite Coxeter group to ensure the transitivity of the Hurwitz action on its set of reduced decompositions into products of reflections. We show that this action is transitive
if and only if the element is a parabolic quasi-Coxeter element, that is, if and only if it has a reduced decomposition 
into a product of reflections that generate a parabolic subgroup.
\end{abstract}

\maketitle





\tableofcontents

\section{Introduction}\label{sec:intro}


This paper is concerned with the so-called dual approach to Coxeter groups. A \defn{dual Coxeter system} is a Coxeter group together with a generating set consisting of all the reflections in the group, that is, the set of conjugates of all the elements of a simple system. 

There are several motivations for studying dual Coxeter systems. They were introduced by Bessis \cite{Be03} and independently by Brady and Watt \cite{Br01, BW02}. The dual Coxeter systems are crucial in the theory of dual braid monoids, which are alternative braid monoids embedding in the Artin-Tits group attached to a finite Coxeter group (that is, a spherical Artin-Tits group) and providing an alternative Garside structure of it.
The latter allows a new presentation of the group and thereby for instance to get better solutions to the word problem in the spherical Artin-Tits groups (see \cite{BDM}, \cite{Be03}). Each dual braid monoid depends on a choice of a Coxeter element and  its poset of simple elements ordered by left-divisibility is isomorphic to the generalized noncrossing partition lattice with respect to that Coxeter element (see \cite{Be03}). 


Bessis~\cite{Be14} later generalized these notions to complex reflections groups and their braid groups where the dual approach is natural. Indeed, there is in general no canonical choice of a simple system for a complex reflection group. 


Having replaced the set of simple generators of a Coxeter group by the whole set of reflections
in the Coxeter group (and in the Artin-Tits group), one needs to find new sets of relations between these new generators that define the respective groups. The idea is to take the so-called \defn{dual braid relations} \cite{Be03}. Unlike the classical braid relations, a dual braid relation can involve three generators and has the form $ab=ca$ (or $ba=ac$), where $a,b$ and $c$ are reflections. 


In the classical case Matsumoto's Lemma \cite{Mat} allows one to pass from any reduced decomposition of an element to any other one by successive applications of braid relations. The same question can be asked for reduced decompositions with respect to the new set of generators, and can be studied using the so-called \defn{Hurwitz action} on reduced decompositions. 

Let us say a bit more on this action. Let $G$ be an arbitrary group, $n \geq 2$. There is an action of the braid group $\mathcal{B}_n$ on $n$ strands on $G^n$ where the standard generator $\sigma_i\in\mathcal{B}_n$ which exchanges the $i$\ts{th} and $(i+1)$\ts{th} strands acts as 
$$\sigma_i\cdot (g_1,\dots, g_n):=(g_1,\dots, g_{i-1}, g_i g_{i+1}g_i^{-1},g_i,g_{i+2},\dots, g_n).$$
Notice that the product of the entries stays unchanged. This action is called the \defn{Hurwitz action} since it was first studied by Hurwitz in 1891 (\cite{Hur91}) in case $G=\mathfrak{S}_n$. 

Two elements $g,h \in G^n$ are called \defn{Hurwitz equivalent} if there is a braid $\beta \in \mathcal{B}_n$ such that $\beta(g)=h$. It has been shown by Liberman and Teicher (see \cite{LT}) that the question whether two elements in $G^n$ are Hurwitz equivalent or not is undecidable in general. Nevertheless there are results in many cases (see for instance \cite {Hou08}, \cite{Sia09}).
Certainly the Hurwitz action also plays a role in algebraic geometry, more precisely in the braid monodromy of a projective curve (e.g. see \cite{KT00, Bri88} or \cite{JMS14}). 

In the case of finite Coxeter groups, the Hurwitz action can be restricted to the set of minimal length decompositions of a given fixed element $w$ into products of reflections. Given a reduced decomposition $(t_1,\dots, t_k)$ of $w$ where $t_1,\dots, t_k$ are reflections (that is, $w=t_1\cdots t_k$ with $k$ minimal), the generator $\sigma_i\in\mathcal{B}_n$ then acts as
$$\sigma_i\cdot (t_1,\dots, t_k):=(t_1,\dots, t_{i-1}, t_i t_{i+1}t_i,t_i,t_{i+2},\dots, t_k).$$

The right-hand side is again a reduced decomposition of $w$. In fact, we see that the braid group generator $\sigma_i$ acts on the $i$\ts{th} and $(i+1)$\ts{th} entries by replacing $(t_i, t_{i+1})$ by $(t_i t_{i+1}t_i, t_i)$ which corresponds exactly to a dual braid relation. Hence determining whether one can pass from any reduced decomposition of an element to any other just by applying a sequence of dual braid relations is equivalent to determining whether the Hurwitz action on the set of reduced decompositions of the element is transitive. 

The transitivity of the Hurwitz action on the set of reduced decompositions has long been known to be true for a family of elements commonly called \defn{parabolic Coxeter elements} (note that there are several unequivalent definitions of these in the literature). For more on the topic we refer to \cite{BDSW14}, and the references therein, where a simple proof of the transitivity of the Hurwitz action was shown for (suitably defined) parabolic Coxeter elements in a (not necessarily finite) Coxeter group. See also \cite{IS10}. The Hurwitz action in Coxeter groups has also been studied outside the context of parabolic Coxeter elements (see \cite{Voi85}, \cite{Hum03}, \cite{Mi06} - be aware that there are mistakes in \cite{Hum03}). 

The aim of this paper is to provide a necessary and sufficient condition on an element of a finite Coxeter group to ensure the transitivity of the Hurwitz action on its set of reduced decompositions. We call an element of a Coxeter group a \defn{parabolic quasi-Coxeter element} if it admits a reduced decomposition which generates a parabolic subgroup. 

The main result of this paper is the following characterization.

\begin{theorem}
\label{th:maintheorem1}
  Let $(W,T)$ be a finite, dual Coxeter system of rank~$n$ and 
  let~$w \in W$.
  The Hurwitz action on $\Red_T(w)$ is transitive if and only if $w$ is a parabolic quasi-Coxeter element for $(W,T)$.
\end{theorem}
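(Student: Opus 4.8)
The plan is to isolate a single Hurwitz-invariant, namely the subgroup generated by the entries of a decomposition, reduce the whole problem to elliptic elements inside a parabolic subgroup, and then treat the two implications very asymmetrically: the ``only if'' direction will be soft, while the ``if'' direction will carry essentially all the difficulty.

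First I would record the two structural facts that make everything work. By Carter's lemma, if $(t_1,\dots,t_k)\in\Red_T(w)$ then the roots of $t_1,\dots,t_k$ form a basis of the moved space $\mathrm{Mov}(w)=\mathrm{Im}(w-1)$, so $k=\ell_T(w)=\dim\mathrm{Mov}(w)$ depends on $w$ alone. Writing $U=\mathrm{Mov}(w)$ and letting $P$ be the pointwise stabilizer of $U^\perp=\mathrm{Fix}(w)$, Steinberg's theorem identifies $P$ with the parabolic subgroup generated by all reflections whose roots lie in $U$; every reflection occurring in any reduced decomposition of $w$ lies in $P$, so $\Red_T(w)=\Red_{T\cap P}(w)$ and we may work inside $P$, where $w$ is elliptic (it fixes no nonzero vector of $U$). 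Since a parabolic subgroup is recovered from its moved space as a pointwise stabilizer, the only parabolic subgroup with moved space $U$ is $P$ itself; hence $w$ is a parabolic quasi-Coxeter element of $(W,T)$ if and only if $w$ generates $P$, i.e. is a quasi-Coxeter element of $P$. The second fact is that the Hurwitz action preserves the generated subgroup $\langle t_1,\dots,t_k\rangle$, because $\{t_it_{i+1}t_i,t_i\}$ and $\{t_i,t_{i+1}\}$ generate the same group; thus this subgroup is constant on each Hurwitz orbit.

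For the ``only if'' direction I would argue as follows. Because $w$ is elliptic in $P$, its reflection length $k=\Rank P$ is maximal in $P$, and since reflection length changes by exactly $\pm 1$ under left multiplication by a reflection (parity via $\det$), every reflection $t\in T\cap P$ satisfies $\ell_T(tw)=k-1$. Prepending $t$ to a reduced decomposition of $tw$ then shows that every reflection of $P$ occurs as the first entry of some reduced decomposition of $w$. If the Hurwitz action were transitive, all decompositions would generate one common subgroup $W'$; but then $W'$ would contain every reflection of $P$, forcing $W'=P$. Hence $w$ generates $P$, and is a parabolic quasi-Coxeter element.

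The ``if'' direction is the real obstacle: assuming $w$ is a quasi-Coxeter element of $P$ (now rename $P=W$ and $n=\Rank W$), one must show that $\Red_T(w)$ is a single Hurwitz orbit. Transitivity is classical when $w$ is a Coxeter element (the parabolic Coxeter case, cf.\ \cite{BDSW14,IS10}), so the new content is the passage to genuine quasi-Coxeter elements. I would proceed by induction on $n$, the base case $n=1$ being trivial. Given two reduced decompositions, the strategy is to use Hurwitz moves to bring them to a common last reflection $t$, after which the truncated $(n-1)$-tuples are reduced decompositions of $wt$ inside the rank-$(n-1)$ parabolic subgroup they generate, and the inductive hypothesis finishes the argument. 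The hard part, and where I expect genuine difficulty, is twofold: (i) showing that the Hurwitz orbit of any reduced decomposition of a quasi-Coxeter element contains decompositions ending in a prescribed, or at least a common, reflection $t$; and (ii) guaranteeing that the peeled element $wt$ is again a parabolic quasi-Coxeter element of the smaller parabolic subgroup, so that induction applies. Both require controlling which reflections can be divided off a quasi-Coxeter element while staying in the quasi-Coxeter world, and I expect the irreducible exceptional types to resist uniform arguments and to need a separate, possibly computer-assisted, verification serving as the base cases of the induction, with the reducible case then assembled from the irreducible ones.
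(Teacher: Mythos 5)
Your ``only if'' direction is essentially correct and is in substance the paper's own uniform argument (Proposition~\ref{PropEquiv}): since $w$ is elliptic in $P=C_W(\mathrm{Fix}(w))$, every reflection $t\in T\cap P$ is a left divisor of $w$ and hence occurs as a first letter of some reduced decomposition, while the generated subgroup is a Hurwitz invariant; transitivity then forces that invariant to be all of $P$. One caveat: your opening reduction, ``$w$ is a parabolic quasi-Coxeter element of $(W,T)$ if and only if $w$ generates $P$,'' silently identifies the paper's dual notion of parabolic subgroup (a subgroup generated by part of a simple system $S'\subseteq T$) with the classical one (pointwise stabilizer of a subspace). That equivalence is not free: it is Proposition~\ref{prop:parabmemes}/Corollary~\ref{cor:same}, and the paper's proof of it itself combines the transitivity theorem of \cite{BDSW14} for parabolic Coxeter elements with exactly the Hurwitz-invariance argument you use. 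Your assertion that ``a parabolic subgroup is recovered from its moved space as a pointwise stabilizer'' is what has to be proven for dual-parabolic subgroups; it is patchable along the paper's lines, but as written it assumes the conclusion of Section~\ref{sec:parab}.

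The genuine gap is in the ``if'' direction: your induction scheme is precisely the paper's, but the two steps you flag as hard are the entire mathematical content of the theorem, and you leave both unproven. Step (ii), that the peeled element $wt$ sits as a quasi-Coxeter element inside a \emph{parabolic} subgroup of corank one, is Theorem~\ref{thm:quasi-CoxeterParabolic}; its proof is a nontrivial lattice-theoretic argument in the simply laced types (Carter's lemma plus $L(\Phi')=L(\Psi)$ via Proposition~\ref{PropVoigt}) and is case-by-case elsewhere, and it genuinely fails for infinite Coxeter groups (Remark~\ref{counterexample}), so no soft general argument can deliver it. Relatedly, reducing from parabolic quasi-Coxeter to quasi-Coxeter elements requires knowing that \emph{every} reduced decomposition generates $P_w$ (Theorem~\ref{ThmA}), whose proof rests on Kluitmann's invariance of the connection index $i(R)=i(Q)$ across reduced decompositions (Theorem~\ref{EqualConnecIndex}) together with Proposition~\ref{FormelConnecIndex} --- none of which appears in your outline. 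Step (i), arranging a common last letter, is where the paper needs a genuinely new geometric input in type $D_n$: any two maximal parabolic subgroups intersect nontrivially for $n\geq 6$ (Proposition~\ref{Prop:AppB}) --- a statement that is \emph{false} in types $D_4$, $D_5$, $E_7$ and $E_8$, which is why $D_4,D_5$ are checked by computer and types $E$ require a separate computational verification that every reflection occurs as a last entry in the Hurwitz orbit. Finally, types $A_n$, $B_n$ and $I_2(m)$ are handled in the paper not by your induction but by showing (via hyperoctahedral combinatorics in type $B_n$) that parabolic quasi-Coxeter elements there coincide with parabolic Coxeter elements, whence \cite{BDSW14} applies directly. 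So your proposal correctly reconstructs the architecture and the soft half, but the load-bearing lemmas are only anticipated, not supplied.
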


The following theorem is an immediate consequence. 

\begin{theorem}
\label{th:maintheorem2}
Let $(W,T)$ be a finite, dual Coxeter system of rank~$n$ and 
  let~$w \in W$. If $w$ is a quasi-Coxeter element for $(W,T)$, then for \textbf{each} $(t_1 , \ldots , t_n) \in \Red_T(w)$ one has 
  $W= \langle t_1 , \ldots , t_n \rangle$. 
\end{theorem}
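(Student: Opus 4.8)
The plan is to derive the statement directly from Theorem~\ref{th:maintheorem1} together with one elementary observation about the Hurwitz action. The key point is that the Hurwitz action preserves the subgroup generated by the entries of a tuple. Indeed, a single Hurwitz move replaces a consecutive pair $(t_i, t_{i+1})$ by $(t_i t_{i+1} t_i, t_i)$; since each reflection $t_j$ is an involution, one recovers $t_{i+1} = t_i (t_i t_{i+1} t_i) t_i$, so that $\langle t_i, t_{i+1}\rangle = \langle t_i t_{i+1} t_i, t_i\rangle$. Hence $\langle t_1, \dots, t_n\rangle$ is left unchanged by every generator $\sigma_i$ of $\mathcal{B}_n$, and is therefore constant on each Hurwitz orbit.

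First I would record that a quasi-Coxeter element is in particular a parabolic quasi-Coxeter element: by definition it admits a reduced decomposition $(s_1, \dots, s_n) \in \Red_T(w)$ with $\langle s_1, \dots, s_n\rangle = W$, and $W$ is itself a parabolic subgroup of $(W,T)$. Theorem~\ref{th:maintheorem1} then applies and yields that the Hurwitz action on $\Red_T(w)$ is transitive.

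Combining the two facts finishes the argument. Given any $(t_1, \dots, t_n) \in \Red_T(w)$, transitivity provides a braid $\beta \in \mathcal{B}_n$ with $\beta \cdot (s_1, \dots, s_n) = (t_1, \dots, t_n)$. By the invariance observation the generated subgroup is constant along the action of $\beta$, so $\langle t_1, \dots, t_n\rangle = \langle s_1, \dots, s_n\rangle = W$, which is the desired conclusion.

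Since the substantive work is already carried by Theorem~\ref{th:maintheorem1}, I do not expect any serious obstacle here; the only step requiring care is the invariance of the generated subgroup under a single Hurwitz move, and that reduces to the involution identity above. This is precisely why the statement is an immediate consequence of the main theorem.
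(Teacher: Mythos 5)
Your proposal is correct and matches the paper's own treatment: the paper states Theorem~\ref{th:maintheorem2} as an immediate consequence of Theorem~\ref{th:maintheorem1}, and your two ingredients --- that a quasi-Coxeter element is in particular a parabolic quasi-Coxeter element (take $m=n$ in Definition~\ref{def:coxeter}(c)) and that the subgroup generated by the entries of a tuple is invariant under Hurwitz moves --- are exactly the details the paper leaves implicit. One remark for context rather than correction: this derivation treats Theorem~\ref{th:maintheorem1} as a black box, whereas inside the paper's logical development the simply laced case of the statement (Theorem~\ref{ThmA}) is proved first, by lattice-theoretic arguments, because it is itself an input to the proof of Theorem~\ref{th:maintheorem1} in those types.
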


The proof that being a parabolic quasi-Coxeter element is a necessary condition to ensure the transitivity of the Hurwitz action is uniform. The other direction is case-by-case. In the simply laced types we first prove Theorem \ref{th:maintheorem2}
(see Theorem~\ref{ThmA}) and use it to prove the second direction of Theorem \ref{th:maintheorem1}.

Another consequence of Theorem \ref{th:maintheorem1} and of work of Dyer \cite[Theorem~3.3]{Dy90} is the following

\begin{corollary}
\label{th:theorem3}
Let $(W,T)$ be a dual Coxeter system and let $w \in W$. Further let
$(t_1 , \ldots , t_m) \in \Red_T(w)$, set $W^\prime := \langle t_1, \ldots , t_m\rangle$
and $T^\prime:= W^\prime \cap T$. If $W^\prime$ is finite, then the Hurwitz action on $\Red_{T^\prime}(w)$ is transitive.
\end{corollary}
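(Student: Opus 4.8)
The plan is to apply Theorem~\ref{th:maintheorem1} not to $(W,T)$ itself (which need not be finite), but to the reflection subgroup $(W',T')$, after using Dyer's result to certify that this really is a finite dual Coxeter system and that $w$ is a (parabolic) quasi-Coxeter element for it. So the proof is a reduction: all the work has already been done in Theorem~\ref{th:maintheorem1}, and the corollary is the statement that the hypotheses of that theorem are satisfied by the smaller system.

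First I would invoke \cite[Theorem~3.3]{Dy90}. Since $W' = \langle t_1,\ldots,t_m\rangle$ is generated by reflections, it is a reflection subgroup of $W$, and Dyer's theorem equips it with a canonical Coxeter generating set $S'$ such that $(W',S')$ is a Coxeter system whose full set of reflections is exactly $W'\cap T = T'$. Thus $(W',T')$ is a dual Coxeter system in the sense of the paper, and by hypothesis it is finite, so Theorem~\ref{th:maintheorem1} becomes available for it. Note that neither this step nor the length comparison below uses finiteness of $W$; the finiteness of $W'$ enters only when we appeal to the main theorem.

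Next I would verify that the given factorization survives as a reduced decomposition in the smaller system, i.e.\ that $(t_1,\ldots,t_m)\in\Red_{T'}(w)$. Each $t_i$ lies in $T'=W'\cap T$ because $t_i\in T$ and $t_i\in W'$, so $(t_1,\ldots,t_m)$ is a $T'$-factorization of $w$ of length $m$, whence $\ell_{T'}(w)\le m$. Conversely $T'\subseteq T$ forces $\ell_T(w)\le\ell_{T'}(w)$, and $\ell_T(w)=m$ since $(t_1,\ldots,t_m)\in\Red_T(w)$. Combining these inequalities gives $\ell_{T'}(w)=m$, so the factorization is $T'$-reduced. This length equality $\ell_{T'}(w)=\ell_T(w)$ is the one point deserving care, since it is what guarantees that a $T$-reduced decomposition living inside $W'$ does not become shorter, nor fail to be minimal, once measured against the smaller generating set $T'$.

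Finally, the $T'$-reduced decomposition $(t_1,\ldots,t_m)$ generates $W'$ itself, and $W'$ is a parabolic subgroup of $(W',S')$ — the improper one, obtained as $W_{S'}=W'$. Hence $w$ admits a reduced decomposition generating a parabolic subgroup of $(W',T')$, i.e.\ $w$ is a parabolic quasi-Coxeter element (in fact a quasi-Coxeter element) for the finite dual Coxeter system $(W',T')$. Theorem~\ref{th:maintheorem1}, applied to $(W',T')$, then yields that the Hurwitz action on $\Red_{T'}(w)$ is transitive, which is precisely the assertion. I do not anticipate a genuine obstacle beyond correctly citing Dyer's theorem to identify $(W',T')$ as a dual Coxeter system and establishing the length identity above.
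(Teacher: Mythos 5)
Your proposal is correct and follows exactly the route the paper intends (the paper states Corollary~\ref{th:theorem3} without proof, as an immediate consequence of Dyer's theorem and Theorem~\ref{th:maintheorem1}): Dyer's result makes $(W',T')$ a finite dual Coxeter system, your length comparison shows $(t_1,\ldots,t_m)\in\Red_{T'}(w)$, and the main theorem applied to $(W',T')$ gives transitivity. The only point you leave tacit is that $m$ equals the rank of $(W',T')$, which the formal definition of a (parabolic) quasi-Coxeter element requires: this holds because $t_1,\ldots,t_m$ generate $W'$, so $m$ is at least the rank by the unlabelled proposition in Section~\ref{sec:Dual} (after \cite[Corollary 3.11]{Dy90}), while $m=\ell_{T'}(w)$ is at most the rank since by \cite[Lemma 3]{Carter} the roots of the reflections in a reduced decomposition are linearly independent in the span of the root system of the finite group $W'$.
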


It is an open question whether this statement remains true if there is no extra assumption on $W^\prime$.

The structure of the paper is as follows.
We adopt the approach and terminology introduced in [BDSW14] (see Section~\ref{sec:Dual}). 
In particular, we use an unusual definition of parabolic subgroup, which we show in 
Section~\ref{sec:parab} (after recalling some well-known facts on root systems in Section~\ref{sec:root}) to be equivalent to the classical definition for finite Coxeter groups and (as a consequence of results in [FHM06]) for a large family of irreducible infinite Coxeter groups, the so-called irreducible $2$-spherical Coxeter groups. As a byproduct, we obtain some results on parabolic subgroups of finite Coxeter groups. In particular, we show the following:

\begin{Proposition}\label{prop:parabmemes}
Let $(W,S)$ be a finite Coxeter system and $S'\subseteq T$ such that $(W,S')$ is a simple system. Then the parabolic subgroups with respect to $S$ coincide with those with respect to $S'$.
\end{Proposition}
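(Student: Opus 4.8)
The plan is to reduce the statement to the equivalence established in Section~\ref{sec:parab}, namely that for a finite Coxeter group the classical notion of parabolic subgroup (conjugates of the standard parabolic subgroups $\langle I\rangle$, $I\subseteq S$) agrees with the intrinsic notion of parabolic subgroup attached to the dual Coxeter system $(W,T)$. The crucial observation is that this intrinsic notion depends only on the pair $(W,T)$ and not on any choice of simple system. Thus it suffices to show that the simple system $S'$ gives rise to the \emph{same} dual Coxeter system, i.e.\ that the set $T'$ of reflections of $(W,S')$ equals $T$.

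First I would record that, since $S'\subseteq T$ and $T$ is stable under conjugation in $W$, every reflection of $(W,S')$ — being a $W$-conjugate of an element of $S'$ — already lies in $T$, so $T'\subseteq T$. For the reverse inclusion I would invoke the root-system results of Section~\ref{sec:root}: the reflections of a finite Coxeter group are precisely the elements of $W$ whose fixed space in the reflection representation $V$ is a hyperplane, a description that is manifestly independent of the chosen simple system. Since each element of $S'$ acts as such a reflection on $V$ and $S'$ generates $W$ of full rank $n=\dim V$, the roots of $S'$ form a basis of $V$, and by the rigidity of the reflection representation of a finite Coxeter group the system $(W,S')$ is realised on the \emph{same} essential action of $W$ on $V$, with $S'$ playing the role of simple reflections. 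Hence the reflections of $(W,S')$ exhaust all hyperplane-fixing involutions of $W$, giving $T=T'$, so that $(W,T)$ and $(W,T')$ are literally the same dual Coxeter system.

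With $T=T'$ in hand the conclusion is immediate. Applying the equivalence of Section~\ref{sec:parab} to the simple system $S$ identifies the classical parabolic subgroups with respect to $S$ with the intrinsic parabolic subgroups of $(W,T)$, and applying the same equivalence to the simple system $S'$ identifies the classical parabolic subgroups with respect to $S'$ with the intrinsic parabolic subgroups of $(W,T')=(W,T)$. As the intrinsic parabolic subgroups depend only on $(W,T)$, both families coincide with this common collection, and therefore with each other. The main obstacle is precisely the equality $T=T'$: one must rule out that $S'$, although contained in $T$, generates a proper subsystem of reflections, and this is exactly where the spanning of the roots of $S'$ together with the intrinsic (hyperplane-fixing) description of reflections from Section~\ref{sec:root} is needed.
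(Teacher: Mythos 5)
Your overall architecture is exactly that of the paper: Corollary~\ref{cor:same} establishes that a subgroup is parabolic for $(W,T)$ if and only if it is parabolic in the classical sense (via the Hurwitz transitivity for parabolic Coxeter elements from \cite[Theorem~1.3]{BDSW14} combined with Proposition~\ref{PropEquiv}), and the Proposition then follows by applying this equivalence once with $S$ and once with $S'$, since the intrinsic notion depends only on $(W,T)$. You also correctly isolate the one step that makes the second application legitimate, namely that the reflection set $T'$ of $(W,S')$ equals $T$. But note that the paper does not prove this fact; it quotes it from \cite[Lemma~3.7]{BMMN02} in Section~\ref{sec:Dual}, where it is used to justify calling such an $S'$ a simple system for $(W,T)$ in the first place.

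Your own argument for $T'=T$, however, has a genuine gap. The inclusion $T'\subseteq T$ is fine, as is the characterization of $T$ as the set of elements of $W$ fixing a hyperplane in the fixed representation $V$ of $(W,S)$. But the reverse inclusion does not follow from ``rigidity of the reflection representation'': to transport the statement ``abstract reflections $=$ hyperplane-fixing elements'' from the geometric representation of the abstract system $(W,S')$ to $V$, you would need $(V,S')$ to be equivalent, as a \emph{marked} reflection representation, to the geometric representation of $(W,S')$ --- and this fails. Finite Coxeter groups are reflection rigid but not \emph{strongly} reflection rigid (see the Rigidity subsection of Section~\ref{sec:Dual} and Remark~\ref{rmk:coxeter}(c)): in $I_2(5)$ with $S=\{s,t\}$, the set $S'=\{s,tst\}$ is a simple system contained in $T$, yet on $V$ the product $s\cdot tst=(st)^2$ acts as a rotation through $4\pi/5$, whereas in the geometric representation of $(W,S')$ the product of the two simple reflections rotates through $2\pi/5$; the two representations are not even isomorphic as $W$-modules, so no rigidity statement identifies them, and your ``hence the reflections of $(W,S')$ exhaust all hyperplane-fixing involutions'' is precisely where the needed content is assumed rather than proved. (In this example $T'=T$ does hold, but only because all five reflections of $I_2(5)$ are conjugate --- an argument your sketch does not supply.) The repair is immediate: replace the rigidity paragraph by the citation of \cite[Lemma~3.7]{BMMN02}, as the paper does in Section~\ref{sec:Dual}; with that substitution the rest of your argument coincides with the paper's proof of Corollary~\ref{cor:same} in Section~\ref{sec:parab}.
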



Given a root subsystem $\Phi^\prime$ of a given 
root system $\Phi$, we discuss in Section~\ref{sec:lattice} the relationship between the corresponding Coxeter groups and
root lattices, especially in the simply laced types. These results are needed later in Section~\ref{sec:quasi}.
It is known for the Coxeter groups of type $A_n$ that all the elements are parabolic Coxeter elements in the sense of \cite{BDSW14}. For types $B_n$ and $I_2(m)$, the sets of parabolic Coxeter elements and parabolic quasi-Coxeter elements coincide as it is shown in Section~\ref{sec:quasi}. In particular, Theorem \ref{th:maintheorem1} is true for $A_n, B_n$ and $I_2(m)$ as a consequence of \cite{BDSW14}. For the other types it is in general false that parabolic quasi-Coxeter elements coincide with parabolic Coxeter elements.  In Section~\ref{sec:quasi} we also show:

\begin{theorem}\label{thm:quasi-CoxeterParabolic}
Let $w$ be a quasi-Coxeter element in a finite dual Coxeter system $(W,T)$ of rank $n$ and $(t_1 , \ldots , t_n) \in \text{Red}_T(w)$ such that 
 $W = \langle t_1, \ldots , t_n \rangle$. Then the reflection subgroup $W' := \langle t_1 , \ldots , t_{n-1} \rangle$ is parabolic.
\end{theorem}

The proof of this theorem is uniform for the simply laced types, but case-by-case for the other ones. As a corollary we obtain a new characterization of the maximal parabolic subgroups of a finite dual Coxeter system (see Corollary~\ref{cor:CharParSub}). Moreover it follows from 
Theorem~\ref{thm:quasi-CoxeterParabolic} that an element is a parabolic quasi-Coxeter element if and only if it is a prefix of a quasi-Coxeter element (see Corollary~\ref{lem:parabquasicoxdividesII}).

Theorem~\ref{thm:quasi-CoxeterParabolic} allows us to argue by induction to prove the main theorem. For type $D_n$ we need to show that every two maximal 
parabolic subgroups intersect non-trivially provided that $n \geq 6$ (see Section~\ref{sec:Dn}) which then allows us to conclude Theorem \ref{th:maintheorem1} by induction. For the types $E_6, E_7$ and $E_8$ we first check by computer that every reflection occurs in the Hurwitz orbit of every reduced decomposition of a quasi-Coxeter element, and then argue by induction. Theorem \ref{th:maintheorem1} is verified for types $F_4$, $H_3$ and $H_4$ by computer. All this is done in Section~\ref{sec:proof}.
%
\bigskip\\
\textbf{Acknowledgments} 
The work was done while the third author held a position at the CRC 701 within the project C13 ''The geometry and combinatorics of groups''. The other authors wish to thank the DFG for its support through CRC 701 as well. The fourth author also thanks Bielefeld University for financial support through a scholarship by the rectorship. Last but not least we thank Christian Stump for fruitful discussions.

\section{Dual Coxeter systems and Hurwitz action} \label{sec:Dual}

\subsection{Dual Coxeter systems}\label{sub:dual}
Let $(W,T)$ be a \defn{dual Coxeter system} of finite rank $n$ in the sense 
of~\cite{Be03}. This is to say that there is a subset $S \subseteq T$ with $|S| = n$ such that $(W,S)$ is a (not necessarily finite) Coxeter system, and $T = \big\{ wsw^{-1} \mid w \in W, s \in S \big\}$ is the set of \defn{reflections} for the Coxeter system $(W,S)$ (unlike Bessis, we specify no Coxeter element). We call~$(W,S)$ a \defn{simple system} for $(W,T)$ and $S$ a set of \defn{simple reflections}. If $S'\subseteq T$ is such that $(W,S')$ is a Coxeter system, then $\big\{ wsw^{-1} ~|~w \in W, s \in S' \big\}=T$ (see \cite[Lemma 3.7]{BMMN02}). Hence a set $S'\subseteq T$ is a simple system for $(W,T)$ if and only if $(W,S')$ is a Coxeter system. The \defn{rank} of $(W,T)$ is defined as $|S|$ for a simple system $S$. This is well-defined by \cite[Theorem 3.8]{BMMN02}. It is equal to the rank of the corresponding root system.

Simple systems for $(W,T)$ have been studied by several authors (see \cite{FHM06}). Clearly, if $S$ is a simple system for $(W,T)$, then so is $wSw^{-1}$ for 
any $w \in W$. Moreover, it is shown in~\cite{FHM06} that for an important class of infinite Coxeter groups including the irreducible affine Coxeter groups, all simple 
systems for $(W,T)$ are conjugate to one another in this sense.

\noindent The following result is well-known and follows from \cite{Dy90}
\begin{Proposition}
Let $(W,S)$ be a (not necessarily finite) Coxeter system of rank $n$. Then $W$ cannot be generated by less than $n$ reflections.
\end{Proposition}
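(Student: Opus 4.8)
The plan is to work inside the geometric (Tits) representation of $(W,S)$ on the real vector space $V = \spanr\{\alpha_s \mid s \in S\}$ of dimension $n$, equipped with the $W$-invariant symmetric bilinear form $B$ normalized so that $B(\alpha_s,\alpha_s)=1$. Recall that every reflection $t \in T$ is the reflection $t_\beta \colon v \mapsto v - 2B(v,\beta)\beta$ in a root $\beta = \beta_t$, that $W$-invariance of $B$ gives $B(\beta,\beta)=1$ for every root, and that consequently the ``moved space'' $(t-1)V$ equals the line $\RR\beta$.

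Now suppose toward a contradiction that $W = \langle t_1,\dots,t_m\rangle$ with $t_1,\dots,t_m \in T$ and $m < n$. Writing $\beta_i$ for the root of $t_i$, I would set $M := \spanr\{\beta_1,\dots,\beta_m\}$, so that $\dim M \le m < n$. First I would check that $M$ is $W$-invariant: each generator satisfies $t_j(\beta_i) = \beta_i - 2B(\beta_i,\beta_j)\beta_j \in \spanr\{\beta_i,\beta_j\} \subseteq M$, so every $t_j$ preserves $M$ and hence so does all of $W$. Thus $V/M$ carries a well-defined induced $W$-representation.

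The key step is to observe that this quotient representation is trivial and then to feed the simple reflections back into it. Since $(t_i-1)V = \RR\beta_i \subseteq M$, each generator $t_i$ acts as the identity on $V/M$; as the $t_i$ generate $W$, the whole group $W$ acts trivially on $V/M$. In particular every simple reflection $s \in S \subseteq W$ acts trivially on $V/M$, which forces $(s-1)V = \RR\alpha_s \subseteq M$, that is, $\alpha_s \in M$ for every $s \in S$. But $\{\alpha_s \mid s\in S\}$ spans $V$, so $V \subseteq M$ and $\dim M = n$, contradicting $\dim M \le m < n$. Hence $m \ge n$.

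The point I expect to require the most care is resisting the temptation to argue more directly by intersecting the fixed hyperplanes of the $t_i$: that naive dimension count fails for infinite Coxeter groups because $B$ may be degenerate (e.g.\ in the affine case $W$ genuinely fixes the radical of $B$ pointwise), so one cannot conclude that a common fixed space is trivial. The device above circumvents this entirely by passing to $V/M$ and exploiting that the simple reflections themselves lie in $W$, and it uses neither (non)degeneracy of $B$ nor faithfulness of the representation. Alternatively, the statement can be deduced from Dyer's description \cite{Dy90} of a reflection subgroup as a Coxeter group carrying a canonical simple system, combined with the well-definedness of the rank \cite[Theorem 3.8]{BMMN02}, but the geometric argument seems cleaner and self-contained.
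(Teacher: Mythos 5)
Your proof is correct, but it follows a genuinely different route from the paper's. The paper argues entirely inside Dyer's theory of reflection subgroups: writing $\chi(W)=\{t\in T \mid \{u\in T \mid \ell(ut)<\ell(t)\}=\{t\}\}$ for the canonical Coxeter generators of $W$ viewed as a reflection subgroup of itself, it invokes \cite[Corollary 3.11(i)]{Dy90} to get $|\chi(W)|\leq k$ for any generating set of $k$ reflections, and combines this with $S\subseteq\chi(W)$, so $|\chi(W)|\geq n$. Your argument instead runs through the Tits representation: the span $M=\spanr\{\beta_1,\dots,\beta_m\}$ of the roots of the generating reflections is $W$-invariant, each $t_i$ acts trivially on $V/M$ since $(t_i-1)V=\RR\beta_i\subseteq M$, hence all of $W$ does, and feeding the simple reflections back in forces $\alpha_s\in M$ for all $s\in S$, so $\dim M=n$. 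Each step checks out (in particular, no faithfulness or nondegeneracy of $B$ is used, and your caveat about the naive fixed-hyperplane count failing in the affine case is well taken). What each approach buys: yours is elementary and self-contained, needing only the geometric representation and in fact proving slightly more, namely that the roots of any generating set of reflections must span $V$; the paper's one-line appeal to Dyer is shorter given that \cite{Dy90} is already a standing reference there, and the machinery of canonical generators $\chi(W')$ applies uniformly to arbitrary reflection subgroups $W'$, which the paper exploits elsewhere (e.g.\ in the proof of Proposition \ref{PropVoigt}), whereas your spanning argument as stated only bounds the number of generators for the full group unless one first passes to $\spanr$ of the relevant root subsystem.
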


\begin{proof}
Assume that $W=\left\langle t_1,\dots, t_k\right\rangle$, with $k\leq n$. Following \cite{Dy90}, write $$\chi(W)= \{t \in T~|~\{u \in T~|~ \ell(ut) < \ell(t)\} = \{t\} \}$$ for the set of canonical Coxeter generators of $W$ where $\ell$ is the length function in $(W,S)$. It follows from \cite[Corollary 3.11 (i)]{Dy90} that  $|\chi(W)|\leq k$. But since $W$ is of rank $n$ the set $\chi(W)$ contains $S$ hence we have $|\chi(W)|\geq n$, which concludes the proof.
\end{proof}

A \defn{reflection subgroup} $W'$ is a subgroup of $W$ generated by reflections.
It is well-known that $(W',W'\cap T)$ is again a dual Coxeter system (see \cite{Dy90}).
For $w \in W$, a reduced $T$-decomposition of~$w$ is a shortest length decomposition of~$w$ into reflections, and we denote by $\Red_T(w)$ the set 
of all such reduced $T$-decompositions. When the context is clear, we drop the $T$ and elements of $\Red_T(w)$ are referred to as \defn{reduced decompositions} of $w$. The length of any element in $\Red_T(w)$ is called the \defn{reflection length} (or \defn{absolute length}) of $w$ and we denote it by $\lt(w)$. The absolute length function $\lt:W\rightarrow\mathbb{Z}_{\geq 0}$ can be used to define the \defn{absolute order} $<_T$ on $W$ as follows. For $u,v\in W$:
$$u<_T v\text{ if and only if }\lt(u)+\lt(u^{-1}v)=\lt(v).$$


The reflection subgroup generated by $\{s_1,\ldots,s_m\}$ is called a \defn{parabolic subgroup} for $(W,T)$ if there is a simple system~$S = \{ s_1,\ldots,s_n \}$ for $(W,T)$
with $m \leq n$. Notice that this differs from the usual notion of a parabolic subgroup generated by a conjugate of a subset of a fixed simple system $S$ (see \cite[Section 1.10]{Hu90}). However we prove in Section \ref{sec:parab} the equivalence of the definitions for finite Coxeter groups.

\subsection{Coxeter and quasi-Coxeter elements}

We now define (parabolic) Coxeter elements and introduce (parabolic) quasi-Coxeter elements. The second item of the definition below is borrowed from \cite{BDSW14}.

\begin{Definition}\label{def:coxeter} Let $(W, T)$ be a dual Coxeter system and $S=\{s_1,\dots, s_n\}$ be a simple system for $(W,T)$.
\begin{enumerate}

\item[(a)] We say that $c \in W$ is a \defn{classical Coxeter element} if $c$ is conjugate to some $s_{\pi(1)} \cdots s_{\pi(n)}$ for $\pi$ a permutation of the symmetric group $\mathfrak{S}_n$. An element $w \in W$ is a \defn{classical parabolic Coxeter element} if $w<_T c$ for some classical Coxeter element $c$.
\item[(b)] An element $c\in W$ is a \defn{Coxeter element} if there exists a simple system $S'=\{s_1',\dots, s_n'\}$ for $(W,T)$ such that $c=s_1'\cdots s_n'$. An element $w\in W$ is a \defn{parabolic Coxeter element} if there exists a simple system $S'=\{s_1',\dots, s_n'\}$ for $(W,T)$ such that $w=s_1'\cdots s_m'$ for some $m\leq n$.
\item[(c)] An element $w \in W$ is a \defn{quasi-Coxeter element} for $(W,T)$ if there exists $(t_1,\dots ,t_n)\in\Red_T(w)$ such that $W= \langle t_1 , \ldots , t_n \rangle$. An element $w \in W$ is a \defn{parabolic quasi-Coxeter element} for $(W,T)$ 
if there is a simple system $S'= \{ s_1' , \ldots , s_n'\}$ and $(t_1 , \dots , t_m)\in\Red_T(w)$ such that $\langle t_1 , \ldots , t_m \rangle = \langle s_1' , \ldots , s_m' \rangle$ for some $m \leq n$. 
\end{enumerate}
\end{Definition}

\begin{remark}\label{rmk:coxeter}
Let us point out a few facts about these definitions.
\begin{enumerate}
\item[(a)] An element of the form $s_{\pi(1)}\cdots s_{\pi(n)}$ obtained as product of elements of a fixed simple system in some order as in Definition \ref{def:coxeter} (a) is usually called a \defn{standard Coxeter element}. In case the Dynkin diagram of the Coxeter system is a tree, every two standard Coxeter elements are conjugate to each other by a sequence of cyclic conjugations (see \cite[V, 6.1, Lemme 1]{Bou81}), hence the set of classical Coxeter elements forms in that case a single conjugacy class. In particular, this holds for all the finite Coxeter groups. 
\item[(b)] If the Coxeter group is finite, then an element $w\in W$ is a parabolic Coxeter element if and only if $w<_T c$ for some Coxeter element $c$ (see \cite[Corollary 3.6]{DG}).
\item[(c)] It is clear that a classical Coxeter element is a Coxeter element and hence it follows from Remark \ref{rmk:coxeter} (b) that a classical parabolic Coxeter element is a parabolic Coxeter element. The difference between classical Coxeter elements and Coxeter elements is somewhat subtle. For finite Weyl groups (see 
Section~\ref{sec:root} for the definition), the two definitions are equivalent, as a consequence of \cite[Theorem 1.8(ii) and Remark 1.10]{RRS14}. It seems however that no case-free proof of this fact is known. An example where these two definitions differ is the dihedral group $I_2(5)$ (see \cite[Remark 1.1]{BDSW14}).

\item[(d)] We will show in Corollary  \ref{lem:parabquasicoxdividesII}  the same statement as the one given in Remark \ref{rmk:coxeter} (b) but for parabolic quasi-Coxeter elements, namely that $w\in W$ is a parabolic quasi-Coxeter element if and only if there exists a quasi-Coxeter element $w'\in W$ such that $w <_T w'$.
\end{enumerate}
\end{remark}

\begin{example}
In type $I_2(5)$ with Coxeter generators $s,t$, the classical Coxeter elements are the conjugates of $st$. The element $stst$ is a Coxeter element which is not classical. 
\end{example}

\begin{example}
In type $D_4$ with simple system $\{s_0, s_1, s_2, s_3\}$ where $s_2$ does not commute with any other simple reflection, the element $$s_1 (s_2 s_1 s_2) (s_2 s_0 s_2) s_3$$
is a quasi-Coxeter element. It has a reduced expression generating the whole group since if writing $$(t_1,\dots, t_4)=(s_1, s_2 s_1 s_2, s_2 s_0 s_2, s_3)$$
we have that $t_1 t_2 t_1=s_2$ and $s_2 t_3 s_2=s_0$. Using the permutation model for a group of type $D_4$ (see Section~\ref{sec:Dn}), it can be shown that there is no reduced expression of this element yielding a simple system for the group. By computer we checked that the poset $\{w\in W~|~w<_T c\}$ has $54$ elements and is not a lattice. There is a single conjugacy class of quasi-Coxeter elements which are not Coxeter elements in that case. 
\end{example}

\subsection{Rigidity}
Finite Coxeter groups are \defn{reflection rigid}, that is, if $(W,S)$ and $(W,S')$ are simple systems for $(W,T)$, then both systems determine the same diagram (see \cite[Theorem 3.10]{BMMN02}). , we define $(W,T)$ to be \defn{irreducible} if $(W,S)$ is irreducible and its type to be the type of $(W,S)$ for some (equivalently each) simple system $S \subseteq T$. In most cases, the type is determined by the group itself. There are only two exceptions, namely $W_{B_{2k+1}} \cong W_{A_1} \times W_{D_{2k+1}}$ $(k \geq 1)$ and $W_{I_{2}(4k+1)} \cong W_{A_1} \times W_{I_2(2k+1)}$ $(k \geq 1)$, which follows from the classification of the finite irreducible Coxeter groups and \cite[Theorem 2.17, Lemma 2.18 and Theorem 3.3]{Nu06}. 

\medskip

We say that a reflection group is \defn{strongly reflection rigid} if whenever 
$(W,S)$ and $(W,S')$ are simple systems for $(W,T)$, then $S$ and $S^`$ are conjugate in $W$.
    Notice that  $(W,T)$ is strongly reflection rigid if and only if every Coxeter element is a classical Coxeter element. Hence in particular let us point out that Remark~\ref{rmk:coxeter} (c) implies
    
    \begin{remark} The finite Weyl groups are strongly reflection rigid.
    \end{remark}


\subsection{Hurwitz action on reduced decompositions}
The \defn{braid group} on $n$ strands denoted $\mathcal{B}_n$ is the group with generators 
$\sigma_1,\ldots , \sigma_{n-1}$ subject to the relations
\begin{align*}
 \sigma_i \sigma_j & = \sigma_j \sigma_i \quad \text{for } |i-j| > 1,\\
\sigma_i \sigma_{i+1} \sigma_i & = \sigma_{i+1} \sigma_i \sigma_{i+1} \quad \text{for } i=1,\dots,n-2.
\end{align*}
It acts on the set $T^n$ of $n$-tuples of reflections as
\begin{align*}
\sigma_i\cdot (t_1 ,\ldots , t_n ) &= (t_1 ,\ldots , t_{i-1} , \hspace*{5pt} t_i t_{i+1} t_i,
\hspace*{5pt} \phantom{t_{i+1}}t_i\phantom{t_{i+1}}, \hspace*{5pt} t_{i+2} ,
\ldots , t_n), \\
\sigma_i^{-1}\cdot (t_1 ,\ldots , t_n ) &= (t_1 ,\ldots , t_{i-1} , \hspace*{5pt} \phantom
{t_i}t_{i+1}\phantom{t_i}, \hspace*{5pt} t_{i+1}t_it_{i+1}, \hspace*{5pt} t_{i+2} ,
\ldots , t_n).
\end{align*}
\noindent We call this action of $\mathcal{B}_n$ on $T^n$ the \defn{Hurwitz action} and an orbit of this action an \defn{Hurwitz orbit}.



\begin{Lemma}[{\cite[Lemma 1.2]{BDSW14}}]\label{lem:HurwitzOnFactorizations} Let $W'$ be a reflection subgroup of~$W$ and let $T' = T \cap W'$ be the 
  set of reflections in~$W'$.
  For an element $w \in W'$ with reduced $T$-factorization $w=t_1\cdots t_n$, the braid group on~$n$ strands acts on $\Red_{T'}(w)$.
\end{Lemma}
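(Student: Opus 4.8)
My plan is to realise $\Red_{T'}(w)$ as a $\mathcal{B}_n$-stable subset of the set $T^n$ on which the Hurwitz action is already defined, and for this I would isolate two claims: (i) the reflection lengths agree, $\ell_{T'}(w)=\lt(w)=n$, so that $\Red_{T'}(w)\subseteq (T')^n\subseteq T^n$ really consists of $n$-tuples; and (ii) the subset $\Red_{T'}(w)$ is preserved by each generator $\sigma_i^{\pm1}$. Granting (i), one checks that $\Red_{T'}(w)=\{(s_1,\dots,s_n)\in\Red_T(w)\mid s_j\in T'\text{ for all }j\}$: any reduced $T'$-decomposition is a $T$-decomposition of length $\ell_{T'}(w)=n=\lt(w)$, hence lies in $\Red_T(w)$, and conversely a $T$-reduced tuple with all entries in $T'$ is a $T'$-factorisation of minimal length $n=\ell_{T'}(w)$. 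Since $\mathcal{B}_n$ already acts on $T^n$ and preserves the product of the entries, it then suffices to prove (i) and (ii).

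Claim (ii) is the easy half and needs no length information beyond the invariance of the number of entries. Each generator replaces a consecutive pair $(s_i,s_{i+1})$ by $(s_is_{i+1}s_i,\,s_i)$ (respectively by $(s_{i+1},\,s_{i+1}s_is_{i+1})$ for $\sigma_i^{-1}$); if $s_i,s_{i+1}\in T'$ then $s_is_{i+1}s_i$ lies in $W'$, being a product of elements of $W'$, and it is a conjugate of a reflection, hence a reflection, so $s_is_{i+1}s_i\in T\cap W'=T'$. Thus $(T')^n$ is stable, the product of the entries is unchanged (and so remains $w$), and the number of factors stays $n=\ell_{T'}(w)$, so reducedness is preserved. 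Hence $\Red_{T'}(w)$ is $\sigma_i^{\pm1}$-stable, and therefore $\mathcal{B}_n$-stable.

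The heart of the matter is claim (i). The inequality $\lt(w)\le\ell_{T'}(w)$ is immediate from $T'\subseteq T$. For the reverse I would pass to the reflection representation $V$ of $(W,S)$ with root system $\Phi$, and invoke the dimension formula $\lt(u)=\dim\operatorname{im}(u-\operatorname{id}_V)$ valid for finite Coxeter groups (Carter). Writing $\Phi'=\{\alpha\in\Phi\mid s_\alpha\in T'\}$ for the root subsystem attached to $W'$ and $V'=\spanr(\Phi')$, the fact that $w$ is a product of reflections $s_\alpha$ with $\alpha\in\Phi'$, together with the elementary inclusion $\operatorname{im}(AB-\operatorname{id})\subseteq A\operatorname{im}(B-\operatorname{id})+\operatorname{im}(A-\operatorname{id})$, shows $\operatorname{im}(w-\operatorname{id})\subseteq V'$. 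Using a $W$-invariant inner product (available in the finite case) one gets $V=V'\oplus (V')^{\perp}$ with $w$ fixing $(V')^{\perp}$ pointwise, whence $\operatorname{im}(w-\operatorname{id})=\operatorname{im}\big((w-\operatorname{id})|_{V'}\big)$. Applying the same dimension formula inside the reflection representation $V'$ of the dual Coxeter system $(W',T')$ gives $\ell_{T'}(w)=\dim\operatorname{im}\big((w-\operatorname{id})|_{V'}\big)$, and chaining these equalities yields $\ell_{T'}(w)=\dim\operatorname{im}(w-\operatorname{id})=\lt(w)$, which is claim (i).

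The main obstacle is precisely this length identity, and in particular making it work beyond the finite setting: for infinite Coxeter groups there is no positive-definite invariant form, the orthogonal splitting $V=V'\oplus (V')^{\perp}$ is unavailable, and the dimension formula for $\lt$ may fail, so one must instead compare $\lt$ and $\ell_{T'}$ through the theory of reflection subgroups and the canonical reflection cocycle (Dyer). Since the applications in this paper only require finite $W$, I would present the geometric argument above and merely remark that $\ell_{T'}(w)=\lt(w)$ for $w\in W'$ holds in general. With (i) and (ii) established, the restriction of the Hurwitz action to $\Red_{T'}(w)$ is a well-defined $\mathcal{B}_n$-action, as asserted.
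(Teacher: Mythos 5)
Your proposal is correct, but note that the paper itself contains no proof of this lemma: it is quoted verbatim from \cite[Lemma 1.2]{BDSW14}, and the proof there rests on Dyer's theorem that for a reflection subgroup $W'$ of an \emph{arbitrary} Coxeter system one has $\ell_{T'}(w)=\ell_T(w)$ for all $w\in W'$, combined with exactly your closure observation (ii). So the skeleton --- length equality plus stability of $(T')^n$ under the Hurwitz moves --- is the intended one; where you genuinely diverge is step (i), which you establish geometrically in the finite case via Carter's formula $\ell_T(u)=\dim\mathrm{im}(u-\mathrm{id}_V)$, the splitting $V=V'\oplus(V')^{\perp}$, and the dimension formula applied to $W'$ acting on $V'$. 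One small point worth making explicit there: to apply Carter's formula to $(W',T')$ inside $V'$ you need that the reflections of the group $W'\vert_{V'}$ are precisely the restrictions of $T'=T\cap W'$, which follows because an element of $W'$ acting as a reflection on $V$ is a reflection of $W$, hence lies in $T\cap W'$. Your route buys a self-contained, elementary argument well suited to the finite-type focus of the paper; the cost is generality, and here your caveat is not merely cosmetic: the lemma is stated in Section~\ref{sec:Dual} for dual Coxeter systems in which $(W,S)$ need not be finite, and Corollary~\ref{th:theorem3} invokes precisely the equality $\ell_{T'}=\ell_T\vert_{W'}$ with $W$ possibly infinite and only $W'$ finite. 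So your concluding ``merely remark'' must be an actual citation of Dyer's general result (M.~Dyer, \emph{On minimal lengths of expressions of Coxeter group elements as products of reflections}, Proc.\ Amer.\ Math.\ Soc.\ 129 (2001), 2591--2595) rather than an assertion; with that citation in place your write-up covers the stated generality, and without it it proves only the finite case.
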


\medskip





We now give the main result of \cite{BDSW14}

\begin{theorem}
\label{theoremCoxeter}
  Let $(W,T)$ be a dual Coxeter system of finite rank~$n$ and 
  let~$c = s_1\cdots s_m$ be a parabolic Coxeter element in~$W$.
  The Hurwitz action on $\Red_T(c)$ is transitive.
  In symbols, for each $(t_1,\ldots,t_m) \in T^m$ such that 
  $c = t_1 \cdots t_m$, there is a braid $\beta \in \mathcal{B}_m$ such that
  $$\beta\cdot(t_1,\ldots,t_m) = (s_1,\ldots,s_m).$$
\end{theorem}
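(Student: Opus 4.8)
The plan is to prove the final statement, namely Theorem~\ref{theoremCoxeter}, which asserts the transitivity of the Hurwitz action on $\Red_T(c)$ for a parabolic Coxeter element $c = s_1 \cdots s_m$. By the definition of parabolic Coxeter element, we may assume $\{s_1, \dots, s_m\}$ is contained in a simple system $S' = \{s_1', \dots, s_n'\}$ for $(W,T)$, with $s_i = s_i'$ for $i \le m$. The key reduction is to pass to the reflection subgroup generated by $s_1, \dots, s_m$: by Lemma~\ref{lem:HurwitzOnFactorizations}, if I set $W' := \langle s_1, \dots, s_m \rangle$ and $T' := T \cap W'$, then the braid group acts on $\Red_{T'}(c)$, and $c$ is a genuine Coxeter element for the dual Coxeter system $(W', T')$ of rank $m$. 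Thus it suffices to treat the case where $c$ is an honest (non-parabolic) Coxeter element and every reflection in the factorization already lies in $W'$.

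The main step will therefore be to show transitivity when $c = s_1 \cdots s_n$ is a Coxeter element for a dual Coxeter system $(W,T)$ of rank $n$ and $(t_1, \dots, t_n) \in \Red_T(c)$ is arbitrary. I would argue by induction on the rank $n$. The base case $n = 1$ is trivial since $\Red_T(c) = \{(c)\}$. For the inductive step, the natural strategy is to show that some reflection from the target tuple, say $s_n$, can be moved into the first position of the given tuple $(t_1, \dots, t_n)$ by a sequence of Hurwitz moves. Once $s_n$ appears in position $1$, one peels it off: the remaining $n-1$ reflections form a reduced decomposition of $s_n c = s_1 \cdots s_{n-1}$, which is a Coxeter element (indeed a parabolic Coxeter element) in the rank $n-1$ reflection subgroup $\langle s_1, \dots, s_{n-1} \rangle$, and the inductive hypothesis finishes the job.

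The hard part will be establishing that the target reflection $s_n$ (or some chosen simple reflection) really can be brought to the front of an arbitrary reduced decomposition via Hurwitz moves. This is where the geometry of the root system and the properties of Coxeter elements must enter. The key structural fact to exploit is that the Hurwitz orbit of $(t_1, \dots, t_n)$ generates the same reflection subgroup regardless of the starting point (the product $t_1 \cdots t_n = c$ is fixed, and the set of reflections appearing can be analyzed via the absolute order $<_T$). One wants to show that every reflection $t$ with $t <_T c$ appears as the first entry of some tuple in the Hurwitz orbit; combined with the fact that for a Coxeter element the reflections below it are exactly those in $T$ whose associated roots lie in the appropriate root subsystem, a counting or connectivity argument on the set of such reflections yields that $s_n$ is reachable. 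I expect the delicate point to be ensuring that this reachability argument does not secretly require transitivity (which we are trying to prove); the resolution is to phrase it purely in terms of which reflections can occupy the first slot, using that $t_1 \cdots t_n = c$ forces strong constraints via the covering relations of $<_T$ and the fact that $c$ has full support. I would lean on the known combinatorics of noncrossing partition lattices associated to $c$ to make the connectivity of the relevant graph precise.
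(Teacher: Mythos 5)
First, a point of comparison: the paper does not prove Theorem~\ref{theoremCoxeter} at all --- it is quoted verbatim as the main result of \cite{BDSW14}. So your proposal must be measured against what a correct proof requires. Your skeleton (restrict to $W' = \langle s_1,\ldots,s_m\rangle$ with $T' = T\cap W'$, then induct on rank by peeling off a simple reflection) is indeed the standard shape, but even the reduction has a hole as written: Lemma~\ref{lem:HurwitzOnFactorizations} only says that the braid group \emph{preserves} $\Red_{T'}(c)$; it does not tell you that an arbitrary $(t_1,\ldots,t_m)\in\Red_T(c)$ has all its entries in $W'$, which is what you need for the restriction to suffice. That statement is precisely Theorem~1.4 of \cite{BDSW14} (for finite $W$ it follows from Lemma~\ref{lem:inclusionfixed}, as the paper notes), and it must be invoked explicitly. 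There is also a computational slip in the peeling step: if $s_n$ sits in the \emph{first} slot, the remaining $n-1$ entries multiply to $s_n c = s_n(s_1\cdots s_{n-1})s_n$, not to $s_1\cdots s_{n-1}$, and this element does not lie in $\langle s_1,\ldots,s_{n-1}\rangle$ in general; you should either bring $s_n$ to the \emph{last} slot, or observe that $s_n c$ is a parabolic Coxeter element for the conjugated simple system $s_n S s_n$. Both of these are repairable.

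The genuine gap is the reachability step, and it is exactly where the entire content of the theorem lives. That every reflection $t <_T c$ is the first entry of \emph{some} reduced decomposition is trivial from the definition of $<_T$; what you must show is that such a tuple lies in the Hurwitz orbit of the \emph{given} one, and that is the theorem itself. Your proposal names a ``counting or connectivity argument'' but never supplies one, so the plan reduces the statement to itself at its crux. The suggested crutch --- the noncrossing partition lattice of $c$ --- cannot carry the weight in the stated generality: Theorem~\ref{theoremCoxeter} is asserted for an arbitrary dual Coxeter system of finite rank, where $W$ may be infinite and the interval $[1,c]_{<_T}$ is in general not a lattice (already in affine types; compare the pathology in Remark~\ref{counterexample}). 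Even in finite type, the lattice property is itself a case-by-case theorem whose usual developments are intertwined with Hurwitz transitivity, so leaning on it both risks circularity and forfeits the uniformity that makes \cite{BDSW14} valuable. The actual proof in \cite{BDSW14} needs no such global connectivity input: it is a short, self-contained induction using only basic Coxeter combinatorics (conjugation by simple reflections together with Dyer's theory of reflection subgroups) to manufacture a simple reflection at an end of the tuple by explicit Hurwitz moves, uniformly for all (possibly infinite) Coxeter systems of finite rank. To complete your plan you would need an honest replacement of that kind for the reachability step.
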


Hence in case $W$ is finite, Theorem \ref{th:maintheorem1} generalizes Theorem \ref{theoremCoxeter}. 

\begin{remark}\label{rmk:voigt}
For the case where $(W,T)$ is simply laced and $w$ is a quasi-Coxeter element in $W$, Voigt observed in his thesis \cite{Voi85} that the Hurwitz action on $\Red_T(w)$ is transitive. His definition of a quasi-Coxeter element is slightly different. Let $\Phi$ be the root system associated to $W$ (see Sections \ref{sec:root} and \ref{sec:lattice} for definitions and notations on root systems and lattices), then Voigt defined $w=s_{\alpha_1} \cdots s_{\alpha_n} \in W$ to be quasi-Coxeter if $\spanz(\alpha_1, \ldots, \alpha_n)$ is equal to the root lattice of $\Phi$. The connection with our definition will be explained in Section \ref{sec:lattice}.
\end{remark}


\section{Root systems and geometric representation}\label{sec:root}

In this section we fix some notation, and for the convenience of the reader we recall some facts on root systems and the geometric representation of a Coxeter group as can be found for example in \cite{Hu90} or \cite{Bou81}.

Let $V$ be a finite-dimensional Euclidean vector space with 
positive definite symmetric bilinear form $(-\mid-)$. For $0\neq\alpha \in V$, let 
$s_{\alpha}: V\rightarrow V$ be the reflection in the hyperplane orthogonal to $\alpha$, that is, the map defined by 

$$v \mapsto v - \frac{2(v\mid\alpha)}{(\alpha\mid\alpha)} \alpha.$$

Then $s_{\alpha}$ is an involution and $s_\alpha\in \mathrm{O}(V)$, the orthogonal group of $V$ with respect to $(-\mid-)$.

\medskip
\begin{Definition}\label{def:root}
A finite subset $\Phi \subseteq V$ of nonzero vectors is called \defn{root system} in $V$ if 
\begin{enumerate}
\item $\spanr(\Phi) =V,$
\item $s_{\alpha} ( \Phi ) = \Phi$ for all $\alpha \in \Phi,$
\item $\Phi \cap \RR \alpha = \{ \pm \alpha \}$ for all $\alpha \in \Phi.$
\end{enumerate}
The root system is called \defn{crystallographic} if in addition 
\begin{enumerate}
\item[(4)] $\left\langle \beta,\alpha\right\rangle:=\frac{2(\beta \mid \alpha)}{(\alpha \mid \alpha)} \in \ZZ$ for all $\alpha, \beta \in \Phi$. 
\end{enumerate}
The \defn{rank} $\Rank(\Phi)$ of $\Phi$ is the dimension of $V$. The group  $W_{\Phi}=\left\langle s_{\alpha}~|~\alpha\in\Phi\right\rangle$ associated to the root system $\Phi$ is a Coxeter group. In the case the root system is crystallographic then $W_\Phi$ is a \defn{(finite) Weyl group}. A subset $\Phi' \subseteq \Phi$ is called a \defn{root subsystem} if $\Phi'$ is a root system in $\spanr(\Phi')$.
\end{Definition}

Conversely, to any finite Coxeter group one can associate a root system. For an infinite Coxeter system $(W,S)$, one can still associate a set of vectors (again called a root system) with slightly relaxed conditions (see for instance \cite[Section 5.3-5.7]{Hu90}). In the following, when dealing with root systems it will always be in the sense of the above definition unless otherwise specified since this paper primarily concerns finite Coxeter groups. When results generalize to arbitrary Coxeter groups, we will mention that we work with the generalized root systems.

Let $\Phi \neq \varnothing$ be a root system. Then $\Phi$ is \defn{reducible} if $\Phi=\Phi_1\overset{\cdot}{\cup}\Phi_2$ where $\Phi_1$, $\Phi_2$ are nonempty root systems such that $(\alpha \mid \beta)=0$ whenever $\alpha\in\Phi_1$, $\beta\in\Phi_2$. Otherwise $\Phi$ is \defn{irreducible}. For an irreducible crystallographic root system $\Phi$, the set 
$\{ (\alpha \mid \alpha) \mid \alpha \in \Phi \}$ has at most two elements (see \cite[Section 2.9]{Hu90}. If this set has only one element, up to rescaling we can assume it to be $2$ and call $\Phi$ \defn{simply laced}. It follows from the classification of irreducible root systems that simply laced root systems are crystallographic. Simply laced root systems have types $A_n$ $(n\ge 1),D_n$ $(n\ge 4)$ or $E_n$ $(n\in\{6,7,8\})$ in the classification. We will sometimes use the notation $W_{X_n}$ for the Coxeter group with corresponding root system of type $X_n$ for convenience. For more on the topic we refer the reader to \cite{Hu90}.

\section{Equivalent definitions of parabolic subgroups}\label{sec:parab} 

In this section, we show one direction of Theorem \ref{th:maintheorem1} with a case-free argument. To this end, we show Proposition \ref{prop:parabmemes}, that is, we show that for finite Coxeter groups, the definition of parabolic subgroups given in Section \ref{sec:Dual} coincides with the usual one. We also mention that they coincide for a large class of infinite Coxeter groups.

Let $(W,S)$ be a finite Coxeter system with root system $\Phi$ and $V:=\spanr(\Phi)$ 
We say that a subgroup generated by a conjugate of a subset of $S$ is \defn{parabolic in the classical sense}. It is well-known that these are exactly the subgroups of the form $$C_W(E):=\{w\in W~|~w(v)=v~\text{for all }v\in E\}$$ where $E\subseteq V$ is any set of vectors (see for instance \cite[Section 5-2]{Kane}). 

\begin{Definition}\label{def:parabolic_closure}
Given a subset $\mathcal{A}\subseteq W$, the \defn{parabolic closure} $P_{\mathcal{A}}$ of $\mathcal{A}$ is the intersection of all the parabolic subgroups in the classical sense containing $\mathcal{A}$. It is again a parabolic subgroup in the classical sense (see \cite[12.2-12.5]{Ti74} or \cite{Qi07}). 
\end{Definition}

We denote by $\mathrm{Fix}(\mathcal{A})$ the subspace of vectors in $V$ which are fixed by any element of $\mathcal{A}$. If $\mathcal{A}=\{w\}$, then we simply write $\mathrm{Fix}(w)$ for $\mathrm{Fix}(\mathcal{A})=\ker(w-1)$ and $P_w$ for $P_\mathcal{A}$. For convenience we also set $\mathrm{Mov}(w):=\mathrm{im}(w-1)$. Note that $V=\mathrm{Fix}(w)\oplus \mathrm{Mov}(w)$ (see \cite[Definition 2.4.6]{Arm09}). It follows from the above description that $P_{\mathcal{A}}=C_W(\mathrm{Fix}(\mathcal{A}))$.

In this section we give a case-free proof that for finite Coxeter groups, the parabolic subgroups as defined in Section \ref{sub:dual} coincide with the parabolic subgroups in the classical sense. As a consequence we are able to show one direction of Theorem \ref{th:maintheorem1}. We first recall the following result

\begin{Lemma}[{\cite[Lemma 1.2.1(i)]{Be03}, \cite[Theorem 2.4.7]{Arm09} after \cite{Carter}}]\label{lem:inclusionfixed}
Let $w\in W$, $t\in T$. Then $$\mathrm{Fix}(w)\subseteq \mathrm{Fix}(t)~\text{if and only if }t<_{T}w.$$
\end{Lemma}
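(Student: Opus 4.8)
The plan is to translate the reflection-order condition $t<_T w$ into a statement about the $\mathrm{Mov}$- and $\mathrm{Fix}$-spaces and then settle it by elementary linear algebra. Write $t=s_\alpha$ and $H:=\mathrm{Fix}(t)=\alpha^\perp$. Since $w$ is orthogonal, the decomposition $V=\mathrm{Fix}(w)\oplus\mathrm{Mov}(w)$ is orthogonal with $\mathrm{Mov}(w)=\mathrm{Fix}(w)^\perp$, so the hypothesis $\mathrm{Fix}(w)\subseteq\mathrm{Fix}(t)$ is equivalent to $\alpha\in\mathrm{Mov}(w)$. The lemma thus amounts to proving $\alpha\in\mathrm{Mov}(w)\iff t<_T w$. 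The two tools I would use are Carter's identity $\lt(g)=\dim\mathrm{Mov}(g)$ (equivalently, $\lt(g)=\operatorname{codim}\mathrm{Fix}(g)$) for every $g\in W$, from which $\lt$ is subadditive and, since $\det(tw)=-\det(w)$ forces $\lt(tw)\not\equiv\lt(w)\pmod 2$, one gets $\lt(tw)=\lt(w)\pm 1$; and the orthogonality of $w$ in the form $(wv\mid wv')=(v\mid v')$.

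For the forward implication I would assume $\alpha\in\mathrm{Mov}(w)$, i.e. $\mathrm{Fix}(w)\subseteq H$. Then every $v\in\mathrm{Fix}(w)$ lies in $H$, so $tw(v)=t(v)=v$, giving $\mathrm{Fix}(w)\subseteq\mathrm{Fix}(tw)$. By Carter's identity this yields $\lt(tw)\le\lt(w)$, and together with $\lt(tw)=\lt(w)\pm 1$ we conclude $\lt(tw)=\lt(w)-1$, that is $t<_T w$.

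For the converse I would establish that $\alpha$ lies in exactly one of $\mathrm{Mov}(w)$ and $\mathrm{Mov}(tw)$. The crux is the following: if $\alpha\notin\mathrm{Mov}(w)$, choose $v\in\mathrm{Fix}(w)$ with $(v\mid\alpha)\neq 0$; then for any $v'\in\mathrm{Fix}(tw)$ one has $w(v')=t(v')$, and orthogonality gives $(v\mid v')=(wv\mid wv')=(v\mid t v')=(v\mid v')-\tfrac{2(v'\mid\alpha)}{(\alpha\mid\alpha)}(v\mid\alpha)$, forcing $(v'\mid\alpha)=0$. Hence $\mathrm{Fix}(tw)\subseteq H$, i.e. $\alpha\in\mathrm{Mov}(tw)$, so the forward implication applied to $tw$ gives $\lt(w)=\lt(tw)-1$; thus $\lt(tw)=\lt(w)+1$ and $t\not<_T w$. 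Combined with the forward implication, this proves the equivalence.

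I expect the main obstacle to be the converse, namely showing that $\mathrm{Fix}(w)\not\subseteq\mathrm{Fix}(t)$ genuinely forces the reflection length to increase. The short orthogonality computation above is what makes this transparent; manipulating reduced $T$-decompositions directly would be messier, and it is precisely Carter's identity $\lt(g)=\operatorname{codim}\mathrm{Fix}(g)$ that lets us trade combinatorics of decompositions for dimensions of fixed spaces.
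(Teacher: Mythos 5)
Your proposal is correct, but note that the paper itself offers no proof of this lemma: it is quoted with attributions to \cite[Lemma 1.2.1(i)]{Be03} and \cite[Theorem 2.4.7]{Arm09} (after \cite{Carter}), so the only fair comparison is with those cited proofs. Your argument is essentially the one found there: the reduction of $\mathrm{Fix}(w)\subseteq\mathrm{Fix}(t)$ to $\alpha\in\mathrm{Mov}(w)$ via the orthogonal decomposition $V=\mathrm{Fix}(w)\oplus\mathrm{Mov}(w)$, and the dichotomy $\lt(tw)=\lt(w)\mp 1$ according to whether $\alpha\in\mathrm{Mov}(w)$, are exactly the Carter/Brady--Watt mechanism underlying Armstrong's Theorem 2.4.7. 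All your steps check out: the equivalence of $t<_T w$ with $\lt(tw)=\lt(w)-1$ is immediate from the definition of $<_T$ since $\lt(t)=1$; the parity constraint $\lt(tw)\not\equiv\lt(w)\pmod 2$ follows from $\det(g)=(-1)^{\lt(g)}$; the forward direction correctly combines $\mathrm{Fix}(w)\subseteq\mathrm{Fix}(tw)$ with Carter's identity $\lt(g)=\operatorname{codim}\mathrm{Fix}(g)$; and the orthogonality computation in the converse, forcing $\mathrm{Fix}(tw)\subseteq\alpha^{\perp}$ when $\alpha\notin\mathrm{Mov}(w)$ and then bootstrapping the forward implication applied to $tw$, is a clean and valid way to get $\lt(tw)=\lt(w)+1$. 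The one substantive external input is Carter's identity itself, whose nontrivial half (existence of a $T$-decomposition of length $\operatorname{codim}\mathrm{Fix}(g)$) you import rather than prove; that is legitimate here, and consistent with the paper's own attribution ``after \cite{Carter}'' and its use of \cite[Lemma 3]{Carter} elsewhere, but you should state explicitly that finiteness of $W$ is where that identity (and hence your whole argument) lives, since the lemma fails in this form for infinite Coxeter groups.
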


Notice that Lemma~\ref{lem:inclusionfixed} implies Theorem~1.4 of \cite{BDSW14}
if $W$ is finite.

\begin{Proposition}\label{PropEquiv}
Let $(W,S)$ be a finite Coxeter system, $T=\bigcup_{w\in W} wSw^{-1}$ and $w\in W$. If the Hurwitz action on $\Red_T(w)$ is transitive, then the subgroup generated by the reflections in any reduced decomposition of $w$ is equal to $P_w$.
\end{Proposition}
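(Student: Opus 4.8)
The plan is to prove the two inclusions $\langle t_1,\dots,t_k\rangle \subseteq P_w$ and $P_w \subseteq \langle t_1,\dots,t_k\rangle$ separately, where $(t_1,\dots,t_k)\in\Red_T(w)$ is a fixed reduced decomposition and $k=\lt(w)$. I would first record the elementary but crucial observation that the subgroup $\langle t_1,\dots,t_k\rangle$ generated by the reflections of a reduced decomposition is unchanged by a Hurwitz move, since $\langle t_i,t_{i+1}\rangle=\langle t_it_{i+1}t_i,t_i\rangle$. By the transitivity hypothesis this subgroup is therefore the same for every element of $\Red_T(w)$, so it is legitimate to speak of ``the'' subgroup $W_w$ generated by any reduced decomposition.

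For the inclusion $W_w\subseteq P_w$ (which does not use transitivity), I would show that each individual reflection $t_j$ satisfies $t_j<_T w$. To see this I would Hurwitz-shuffle $t_j$ to the first position: applying the moves $\sigma_{j-1}^{-1},\sigma_{j-2}^{-1},\dots,\sigma_1^{-1}$ successively carries $t_j$ one step to the left at each stage while leaving $t_j$ \emph{itself} unchanged (only the reflection it passes gets conjugated), producing a tuple $(t_j,r_2,\dots,r_k)\in\Red_T(w)$. Since this is a reduced decomposition beginning with $t_j$, we have $1+\lt(t_jw)=\lt(w)$, i.e.\ $t_j<_T w$. By Lemma~\ref{lem:inclusionfixed} this gives $\mathrm{Fix}(w)\subseteq\mathrm{Fix}(t_j)$, so $t_j$ fixes $\mathrm{Fix}(w)$ pointwise and hence lies in $C_W(\mathrm{Fix}(w))=P_w$. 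Therefore $W_w\subseteq P_w$.

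The reverse inclusion $P_w\subseteq W_w$ is where transitivity is indispensable, and I expect it to be the main point of the argument. Since $P_w=C_W(\mathrm{Fix}(w))$ is a parabolic subgroup in the classical sense, it is generated by the reflections it contains, so it suffices to place every reflection $t\in P_w$ inside $W_w$. A reflection $t$ lies in $P_w$ exactly when $\mathrm{Fix}(w)\subseteq\mathrm{Fix}(t)$, equivalently (again by Lemma~\ref{lem:inclusionfixed}) when $t<_T w$. For such a $t$ we have $\lt(tw)=k-1$, so choosing any reduced decomposition $tw=r_2\cdots r_k$ yields $(t,r_2,\dots,r_k)\in\Red_T(w)$. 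By the transitivity of the Hurwitz action this tuple lies in the same orbit as $(t_1,\dots,t_k)$, whence $\langle t,r_2,\dots,r_k\rangle=W_w$ by the Hurwitz-invariance noted above; in particular $t\in W_w$. Thus every reflection of $P_w$ lies in $W_w$, giving $P_w\subseteq W_w$ and completing the proof that $W_w=P_w$.
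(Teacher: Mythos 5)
Your proof is correct and follows essentially the same route as the paper's: both directions rest on Lemma~\ref{lem:inclusionfixed} (each $t_j<_T w$ gives $\langle t_1,\dots,t_k\rangle\subseteq P_w$, and each reflection $t\in P_w$ satisfies $t<_T w$, hence heads some reduced decomposition), combined with the observation that Hurwitz moves preserve the generated subgroup. The only difference is presentational — you prove the two inclusions directly, while the paper assumes $W'\neq P_w$ and derives a contradiction with transitivity — and your explicit shuffle of $t_j$ to the front merely spells out the fact $t_j<_T w$ that the paper asserts without proof.
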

\begin{proof}
Let $(t_1,\dots, t_m)\in\Red_T(w)$ and assume that $W':=\left\langle t_1,\dots, t_m\right\rangle$ is not equal to $P_w$. Since $t_i<_{T} w$ for each $i$, we have $t_i\in P_w$ for all $i=1,\dots,m$ by Lemma \ref{lem:inclusionfixed}. It follows that $W'\subsetneq P_w$. Since both $W'$ and $P_w$ are reflection subgroups of $W$, there exists $t\in P_{w}$ with $t\notin W'$. It follows that $\mathrm{Fix}(w)\subseteq \mathrm{Fix}(t)$, hence also that $t<_T w$ by Lemma \ref{lem:inclusionfixed}. In particular there exists $(q_1,\dots,q_m)\in\Red_T(w)$ with $q_1=t$. Since the Hurwitz orbit of $(t_1,\dots, t_m)$ remains in $W'$ and $t\notin W'$, the Hurwitz action on $\Red_T(w)$ can therefore not be transitive.
\end{proof}

\begin{corollary}\label{cor:same}
Let $(W,S)$ be a finite Coxeter system and $T=\bigcup_{w\in W} wSw^{-1}$. A subgroup $P\subseteq W$ is parabolic if and only if it is parabolic in the classical sense. In particular, if $S'\subseteq T$ is such that $(W,S')$ is a simple system, then the parabolic subgroups in the classical sense defined by $S$ coincide with those defined by $S'$.
\end{corollary}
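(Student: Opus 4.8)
The plan is to prove the two implications of the equivalence separately, and then deduce the ``in particular'' clause by a symmetry argument, since it is precisely Proposition~\ref{prop:parabmemes} phrased for classical parabolic subgroups.

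For the easy direction, suppose $P$ is parabolic in the classical sense, so $P = wW_Iw^{-1}$ for some $I\subseteq S$ and $w\in W$. Then $P=\langle wsw^{-1}\mid s\in I\rangle$, and since $wSw^{-1}$ is again a simple system for $(W,T)$, the set $\{wsw^{-1}\mid s\in I\}$ is a subset of a simple system of size $|I|\le n$. Hence $P$ is parabolic in the sense of Section~\ref{sub:dual}. For the converse, suppose $P=\langle s_1',\ldots,s_m'\rangle$ where $S'=\{s_1',\ldots,s_n'\}$ is a simple system and $m\le n$, and put $w:=s_1'\cdots s_m'$. First I would verify that $(s_1',\ldots,s_m')\in\Red_T(w)$: one has $\lt(w)\le m$ trivially, while $w$ is a Coxeter element of the rank-$m$ reflection subgroup $\langle s_1',\ldots,s_m'\rangle$ and therefore fixes no nonzero vector in the span $V'$ of the corresponding roots, so $\dim\mathrm{Mov}(w)=\dim V'=m$; since every reflection contributes at most one dimension to $\mathrm{Mov}$ (as $\mathrm{im}(w-1)\subseteq\sum_i \mathrm{im}(t_i-1)$ for any factorization $w=t_1\cdots t_k$), this forces $\lt(w)\ge m$, hence equality.

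With reducedness in hand, $w$ is a parabolic Coxeter element, so Theorem~\ref{theoremCoxeter} gives that the Hurwitz action on $\Red_T(w)$ is transitive. Proposition~\ref{PropEquiv} then tells us that the subgroup generated by the reflections of \emph{any} reduced decomposition of $w$ equals $P_w=C_W(\mathrm{Fix}(w))$; applied to the reduced decomposition $(s_1',\ldots,s_m')$ this yields $P=P_w$, which is parabolic in the classical sense. This proves the equivalence. For the final clause, I would note that the definition of parabolic subgroup from Section~\ref{sub:dual} depends only on $(W,T)$ and distinguishes no simple system. Since $(W,S')$ is itself a finite Coxeter system with the same reflection set $T$, the equivalence just proved applies verbatim with $S'$ in place of $S$, so the classical parabolic subgroups defined by $S$, the parabolic subgroups in the sense of Section~\ref{sub:dual}, and the classical parabolic subgroups defined by $S'$ all coincide.

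The easy direction and the symmetry argument are purely formal; the real content sits in the converse, which rests entirely on the transitivity result Theorem~\ref{theoremCoxeter} (the main theorem of \cite{BDSW14}) together with Proposition~\ref{PropEquiv}. The only delicate bookkeeping is the verification that $(s_1',\ldots,s_m')$ is genuinely reduced, which is where the $\mathrm{Mov}$/$\mathrm{Fix}$ dimension count and the fact that Coxeter elements of a finite Coxeter group act without nonzero fixed vectors are needed.
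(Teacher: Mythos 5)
Your proof is correct and takes essentially the same route as the paper: both treat the classical-to-parabolic direction as formal (a conjugate of $S$ is again a simple system), and both obtain the converse by applying Theorem~\ref{theoremCoxeter} to $w=s_1'\cdots s_m'$ and then invoking Proposition~\ref{PropEquiv} to identify $\langle s_1',\ldots,s_m'\rangle$ with the classical parabolic $P_w$. Your only addition is the explicit $\mathrm{Mov}$-dimension verification that $(s_1',\ldots,s_m')\in\Red_T(w)$, a point the paper leaves implicit (it is built into the statement of \cite[Theorem 1.3]{BDSW14}), together with spelling out the symmetry argument for the ``in particular'' clause.
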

\begin{proof}
If $P$ is parabolic, then $P=\left\langle s_1,\dots, s_m\right\rangle$ where $\{s_1,\dots, s_n\}=S'\subseteq T$ is a simple system for $W$ and $m\leq n$. By \cite[Theorem 1.3]{BDSW14}, the Hurwitz action on $\Red_T(w)$ where $w=s_1s_2\cdots s_m$ is transitive. By Proposition \ref{PropEquiv}, it follows that $P$ is parabolic in the classical sense.

Conversely, if $P$ is parabolic in the classical sense, then $P$ is generated by a conjugate of a subset of $S$, and a conjugate of $S$ is again a simple system for $W$. Hence $P$ is parabolic.
\end{proof}

\noindent As a corollary we get a proof of one direction of Theorem \ref{th:maintheorem1}:

\begin{corollary}
\label{cor:onedirection}
  Let $(W,T)$ be a finite, dual Coxeter system of rank~$n$ and 
  let~$w \in W$.
  If the Hurwitz action on $\Red_T(w)$ is transitive, then $w$ is a parabolic quasi-Coxeter element for $(W,T)$.

\end{corollary}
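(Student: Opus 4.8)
The plan is to read the statement off the machinery already assembled, with one numerical identity doing the real work. Suppose the Hurwitz action on $\Red_T(w)$ is transitive, fix any reduced decomposition $(t_1,\dots,t_m)\in\Red_T(w)$ (so that $m=\lt(w)$) and set $W':=\langle t_1,\dots,t_m\rangle$. By Proposition~\ref{PropEquiv} transitivity forces $W'=P_w$, the parabolic closure of $w$. Since $P_w$ is by Definition~\ref{def:parabolic_closure} parabolic in the classical sense, Corollary~\ref{cor:same} upgrades this to the statement that $P_w$ is parabolic for $(W,T)$; that is, there is a simple system $S'=\{s_1',\dots,s_n'\}$ and an index $k\le n$ with $P_w=\langle s_1',\dots,s_k'\rangle$.

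First I would record that $\{s_1',\dots,s_k'\}$ is a simple system for the dual Coxeter system $(P_w,\,P_w\cap T)$, so that $\Rank(P_w)=k$. Comparing with Definition~\ref{def:coxeter}(c), to exhibit $w$ as a parabolic quasi-Coxeter element it then remains only to identify $m$ with $k$: once $m=k$ we get $\langle t_1,\dots,t_m\rangle=P_w=\langle s_1',\dots,s_k'\rangle=\langle s_1',\dots,s_m'\rangle$, which is precisely the required equality.

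The crux is therefore the identity $m=k$, i.e. $\lt(w)=\Rank(P_w)$. I would obtain it from the description $P_w=C_W(\mathrm{Fix}(w))$: since $w\in P_w$ one has $\mathrm{Fix}(P_w)\subseteq\mathrm{Fix}(w)$, while $P_w$ fixes $\mathrm{Fix}(w)$ pointwise by its definition as a centralizer, so $\mathrm{Fix}(P_w)=\mathrm{Fix}(w)$. Using the orthogonal decomposition $V=\mathrm{Fix}(w)\oplus\mathrm{Mov}(w)$ this gives $\Rank(P_w)=\dim\mathrm{Mov}(w)$. Combined with the well-known fact that in a finite Coxeter group the reflection length equals the codimension of the fixed space, namely $\lt(w)=\dim\mathrm{Mov}(w)$ (Carter's lemma; compare the discussion around $V=\mathrm{Fix}(w)\oplus\mathrm{Mov}(w)$ and \cite{Carter}, \cite{Arm09}), this yields $m=\lt(w)=\Rank(P_w)=k$ and completes the argument.

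I expect the main obstacle to be exactly this last identity $\lt(w)=\Rank(P_w)$. The inequality $m\ge k$ is immediate from the proposition that a reflection group cannot be generated by fewer reflections than its rank, applied to $W'=P_w$; but the reverse inequality $m\le k$ genuinely requires the Carter-type upper bound on reflection length and cannot be extracted from transitivity of the Hurwitz action alone. Everything else is bookkeeping: Proposition~\ref{PropEquiv} supplies $W'=P_w$, and Corollary~\ref{cor:same} supplies the translation of parabolicity from the classical to the dual setting.
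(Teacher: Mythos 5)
Your proof is correct and takes essentially the same route as the paper: Proposition~\ref{PropEquiv} gives $W'=P_w$ for any reduced decomposition, and Corollary~\ref{cor:same} converts classical parabolicity into parabolicity for $(W,T)$. The only difference is that you explicitly verify the numerical identity $\lt(w)=\Rank(P_w)$ via $\mathrm{Fix}(P_w)=\mathrm{Fix}(w)$ and Carter's lemma, a bookkeeping step the paper's one-line proof leaves implicit in its final ``hence'' --- a worthwhile addition, since that is exactly where the definition of parabolic quasi-Coxeter element requires the number of simple generators to match the reflection length.
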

\begin{proof}
Let $w=t_1\cdots t_m\in\Red_T(w)$. By Proposition \ref{PropEquiv}, $W':=\left\langle t_1,\dots, t_m\right\rangle$ is parabolic in the classical sense. By Corollary \ref{cor:same}, it follows that $W'$ is parabolic  and hence $w$ is a parabolic quasi-Coxeter element.
\end{proof}

If $(W,S)$ is of type $\tilde{A}_1$, i.e. if $W$ is a dihedral group of infinite order, then the non-trivial parabolic subgroups are precisely those subgroups that are generated by a reflection. Therefore parabolic subgroups in the classical sense coincide with parabolic subgroups in this case.
As an immediate consequence of a theorem of Franzsen, Howlett and M\"uhlherr \cite{FHM06} we also get the equivalence of the definitions for a large family of infinite Coxeter groups (including the irreducible affine Coxeter groups):

\begin{Proposition}
Let $(W,S)$ be an infinite irreducible $2$-spherical Coxeter system, that is $S$ is finite, and $ss'$ has finite order for every $s,s'\in S$. Then a subgroup of $W$ is parabolic if and only if it is parabolic in the classical sense. 
\end{Proposition}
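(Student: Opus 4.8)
The plan is to reduce the statement to the already-proved equivalence for finite Coxeter groups (Corollary~\ref{cor:same}) together with a result of Franzsen, Howlett and M\"uhlherr asserting strong reflection rigidity for infinite irreducible $2$-spherical Coxeter systems. The key point is that in the $2$-spherical case every simple system $S'$ for $(W,T)$ is conjugate to the fixed simple system $S$, so the combinatorial definition of parabolic subgroup (generated by a subset of \emph{some} simple system) can be matched against the classical one (generated by a conjugate of a subset of $S$).

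First I would recall the statement of the Franzsen--Howlett--M\"uhlherr theorem from \cite{FHM06}: for $(W,S)$ infinite, irreducible and $2$-spherical, all simple systems for the dual Coxeter system $(W,T)$ are conjugate in $W$. Equivalently, $(W,T)$ is strongly reflection rigid in the terminology of the Rigidity subsection. This is precisely what lets us dispense with the finiteness hypothesis that was used elsewhere; the only place finiteness entered the proof of Corollary~\ref{cor:same} was through rigidity-type statements, which are now supplied by \cite{FHM06}.

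Next I would prove the two inclusions. For the forward direction, suppose $P$ is parabolic, so $P=\langle s_1',\dots,s_m'\rangle$ for some simple system $S'=\{s_1',\dots,s_n'\}$ of $(W,T)$ with $m\le n$. By the strong rigidity just invoked, $S'=wSw^{-1}$ for some $w\in W$, so after relabelling $S'=\{w s_{i_1} w^{-1},\dots,w s_{i_n} w^{-1}\}$ with the $s_{i_j}\in S$; hence $P=w\langle s_{i_1},\dots,s_{i_m}\rangle w^{-1}$ is generated by a conjugate of a subset of $S$, i.e. parabolic in the classical sense. For the converse, if $P$ is parabolic in the classical sense then $P$ is generated by a conjugate of a subset of $S$; since a conjugate $wSw^{-1}$ of $S$ is again a simple system for $(W,T)$, the group $P$ is generated by a subset of that simple system and is therefore parabolic by definition. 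This second inclusion is identical to the finite case and needs no rigidity input.

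The main obstacle is entirely in the forward direction, and it is concentrated in correctly citing and applying \cite{FHM06}: I must check that their hypotheses (finite $S$, irreducibility, and $ss'$ of finite order for all $s,s'\in S$, which is exactly $2$-sphericity) yield conjugacy of \emph{all} simple systems rather than merely of the canonical generators, and that the conjugacy respects the induced labelling of subsets so that a subset of one simple system maps to a subset of a conjugate of $S$. Once that is in hand the argument is a short transport-of-structure, exactly parallel to Corollary~\ref{cor:same}, with the role previously played by finiteness now played by strong reflection rigidity.
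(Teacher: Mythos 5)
Your proposal is correct and follows essentially the same route as the paper: the paper's proof likewise consists of citing \cite[Theorem 1.~b)]{FHM06} to conclude that any simple system $S'$ is of the form $wSw^{-1}$ (the paper even notes this holds without assuming $S'\subseteq T$), whence a parabolic subgroup is a conjugate of a standard one, with the converse direction being the same trivial observation as in the finite case. Your extra framing via ``strong reflection rigidity'' and the labelling check are harmless elaborations of the same one-line argument.
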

\begin{proof}
Under these assumptions, if $(W,S')$ is a Coxeter system (without assuming $S'\subset T$), it follows from \cite[Theorem 1. b)]{FHM06} that there exists $w\in W$ such that $S'=wSw^{-1}$, hence any parabolic subgroup is a parabolic subgroup in the classical sense. 
\end{proof}

\begin{question}
Do parabolic subgroups always coincide with parabolic subgroups in the classical sense?
\end{question}

    \section{Root lattices}\label{sec:lattice}

In this section, we study root lattices and their  sublattices. The results will be needed for the better understanding of quasi-Coxeter elements in the simply laced types. 

\begin{Definition}
Let $V$ be an Euclidean vector space with symmetric bilinear form $(- \mid -)$. A \defn{lattice} $L$ in $V$ is the integral span of a basis of $V$. The lattice $L$ is called \defn{integral} if $(\alpha \mid \beta) \in \mathbb{Z}$ for all $\alpha, \beta \in L$ and
\defn{even} if $(\alpha \mid \alpha ) =2$ for all basis elements $\alpha$. 
\end{Definition}

For a set of vectors $\Phi\subseteq V$ we set $L(\Phi):=\spanz(\Phi)$.

\begin{remark}
 If $\Phi$ is a root system, then $L(\Phi)$ is a lattice, called 
 \defn{root lattice}.  If $\Phi$ is a crystallographic root system, then $L(\Phi)$ is an integral lattice.
\end{remark}

\begin{Proposition}
 For an even lattice $L$ the set 
 \begin{align*}
  \Phi(L):= \{\alpha \in L \mid (\alpha \mid \alpha) =2 \} 
 \end{align*}
 is a simply laced, crystallographic root system in $\spanr(L)$.
\end{Proposition}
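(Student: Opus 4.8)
The plan is to check, one by one, the defining conditions of Definition~\ref{def:root} for the set $\Phi:=\Phi(L)$ regarded as a subset of $V':=\spanr(L)$: namely that $\Phi$ is finite, spans $V'$, meets each ray $\RR\alpha$ in $\{\pm\alpha\}$, is stable under the reflections $s_\alpha$ for $\alpha\in\Phi$, and satisfies the crystallographic integrality condition; and finally that all its roots have the same length.

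First I would settle finiteness, spanning and the ray condition, which need only that $L$ is a lattice together with the even hypothesis. Since $L$ is a lattice it is a discrete subgroup of $V$, whereas $\{v\in V\mid (v\mid v)=2\}$ is a sphere and hence compact; as $\Phi$ is a discrete and closed subset of a compact set it is finite (and consists of nonzero vectors, the form being positive definite). For spanning I use the even hypothesis directly: a basis $\alpha_1,\dots,\alpha_k$ of $L$ satisfies $(\alpha_i\mid\alpha_i)=2$, so each $\alpha_i$ already lies in $\Phi$, whence $\spanr(\Phi)\supseteq\spanr(\alpha_1,\dots,\alpha_k)=V'$, the reverse inclusion being clear. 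The ray condition is immediate: if $\beta=\lambda\alpha$ with $\alpha,\beta\in\Phi$ and $\lambda\in\RR$, then $2=(\beta\mid\beta)=\lambda^2(\alpha\mid\alpha)=2\lambda^2$, so $\lambda=\pm1$ and $\Phi\cap\RR\alpha=\{\pm\alpha\}$.

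The two remaining conditions use that $L$ is integral. For $\alpha,\beta\in\Phi$ we have $(\alpha\mid\alpha)=2$, so
\[
\langle\beta,\alpha\rangle=\frac{2(\beta\mid\alpha)}{(\alpha\mid\alpha)}=(\beta\mid\alpha)\in\ZZ,
\]
which is exactly the crystallographic condition. For stability, write $s_\alpha(\beta)=\beta-\langle\beta,\alpha\rangle\alpha=\beta-(\beta\mid\alpha)\alpha$: since $(\beta\mid\alpha)\in\ZZ$ and $\alpha,\beta\in L$ this lies in $L$, and since $s_\alpha\in\mathrm{O}(V)$ preserves the form we get $(s_\alpha(\beta)\mid s_\alpha(\beta))=(\beta\mid\beta)=2$; hence $s_\alpha(\beta)\in\Phi$, so $s_\alpha(\Phi)\subseteq\Phi$, and as $s_\alpha$ is an involution $s_\alpha(\Phi)=\Phi$. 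Finally every root has norm $2$, so $\{(\alpha\mid\alpha)\mid\alpha\in\Phi\}=\{2\}$ is a single value and $\Phi$ is simply laced.

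The step I expect to demand the most care is the stability condition, as it is the only point where the two features of $L$ must be combined: integrality keeps $s_\alpha(\beta)$ inside $L$, while the Euclidean structure keeps its norm equal to $2$. I would also flag that integrality is the load-bearing hypothesis, since a norm-$2$ basis alone does not force the norm-$2$ vectors of $L$ to be permuted by the $s_\alpha$; integrality combined with the norm-$2$ basis moreover yields $(v\mid v)\in 2\ZZ$ for every $v\in L$, so that $L$ is even in the usual sense and $\Phi$ is precisely its set of minimal nonzero vectors.
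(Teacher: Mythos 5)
Your proof is correct and takes essentially the same route as the paper's, which establishes finiteness by observing that $\Phi(L)$ is a discrete subset of a bounded (compact) set and then declares the remaining axioms straightforward --- you have simply written out those straightforward verifications (spanning via the norm-$2$ basis, the ray condition, integrality giving $\langle\beta,\alpha\rangle=(\beta\mid\alpha)\in\ZZ$, and hence reflection-stability) in full. Your closing remark is also well taken: the paper's definition of \emph{even} must indeed be read as strengthening \emph{integral}, exactly as you interpret it, since a norm-$2$ basis alone would not keep $s_\alpha(\beta)=\beta-(\beta\mid\alpha)\alpha$ inside $L$.
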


\begin{proof}
 The set $\Phi(L)$ is contained in the ball around 0 with radius 2, therefore bounded, thus finite. The rest of the proof is straightforward.
\end{proof}

\begin{Definition}
Let $\Phi$ be a crystallographic root system in $V$. The \defn{weight lattice} $P(\Phi)$ of $\Phi$ is defined by
$$P(\Phi):=\{ x \in V \mid \left\langle x,\alpha\right\rangle \in \ZZ, ~\forall \alpha \in \Phi \}.$$
By \cite[VI, 1.9]{Bou81} it is again a lattice containing
$L(\Phi)$ and the group $P(\Phi) / L(\Phi)$ is finite. We call its order  the \defn{connection index} of $\Phi$ and denote it by $i(\Phi)$.
\end{Definition}

Note that if $\Phi$ is simply laced, the weight lattice is equal to the dual root lattice, namely,
\begin{align*}
 P(\Phi) = L^*(\Phi) := \{ x \in V \mid (x\mid y) \in \ZZ, ~\forall y \in L(\Phi) \}.
\end{align*}
\begin{Proposition}\label{ConnectionCartan}
Let $\Phi$ be a simply laced root system and let $C$ be the Cartan matrix of $\Phi$. Then
\begin{align*}
i(\Phi)=\Det(C).
\end{align*}
\end{Proposition}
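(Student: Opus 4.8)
The plan is to compute the index $i(\Phi)=[P(\Phi):L(\Phi)]$ directly, by writing down explicit $\mathbb{Z}$-bases of both lattices and comparing them via a base-change matrix. First I would fix a set of simple roots $\alpha_1,\dots,\alpha_n$ of $\Phi$; these form a $\mathbb{Z}$-basis of the root lattice $L(\Phi)=\spanz(\Phi)$. The crucial point where simple-lacedness enters is the observation that the Cartan matrix agrees with the Gram matrix of the simple roots: since every root satisfies $(\alpha_j\mid\alpha_j)=2$, the Cartan pairing reduces to $\langle\alpha_i,\alpha_j\rangle=\tfrac{2(\alpha_i\mid\alpha_j)}{(\alpha_j\mid\alpha_j)}=(\alpha_i\mid\alpha_j)$, so $C=\big((\alpha_i\mid\alpha_j)\big)_{i,j}$.

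Next I would pass to the dual lattice. In the simply laced case we have the identification $P(\Phi)=L^*(\Phi)=\{x\in V\mid (x\mid y)\in\ZZ \text{ for all } y\in L(\Phi)\}$ recorded just above the statement. Introduce the fundamental weights $\omega_1,\dots,\omega_n$, characterised by $(\omega_i\mid\alpha_j)=\delta_{ij}$; because the form $(-\mid-)$ is nondegenerate and $(\alpha_j)$ is a $\mathbb{Z}$-basis of $L(\Phi)$, the dual basis $(\omega_i)$ is a genuine $\mathbb{Z}$-basis of $L^*(\Phi)=P(\Phi)$ (the standard fact that the dual of an integral lattice has the dual basis as a $\mathbb{Z}$-basis).

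The key computation is then to express one basis in terms of the other. Writing $\alpha_i=\sum_j c_{ij}\omega_j$ and pairing with $\alpha_k$ gives $c_{ik}=(\alpha_i\mid\alpha_k)=C_{ik}$, so $\alpha_i=\sum_j C_{ij}\omega_j$. Thus $C$ is exactly the base-change matrix carrying the $\mathbb{Z}$-basis $(\omega_j)$ of $P(\Phi)$ to the $\mathbb{Z}$-basis $(\alpha_i)$ of $L(\Phi)$, and therefore
$$i(\Phi)=[P(\Phi):L(\Phi)]=\lvert\Det(C)\rvert.$$
Finally, since $(-\mid-)$ is positive definite, $C$ is a positive definite matrix, hence $\Det(C)>0$ and the absolute value may be dropped, giving $i(\Phi)=\Det(C)$.

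There is no deep obstacle here: the argument is a routine index-via-Gram-matrix computation. The only two places needing care are (i) verifying that the Cartan matrix coincides with the Gram matrix of simple roots, which is precisely where the hypothesis of simple-lacedness is used, and (ii) justifying that the dual basis $(\omega_i)$ spans $P(\Phi)$ over $\ZZ$; the latter rests on the integrality of $L(\Phi)$ together with $P(\Phi)=L^*(\Phi)$. Positive-definiteness of the form then removes the sign ambiguity at the end.
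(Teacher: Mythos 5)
Your proof is correct and takes essentially the same route as the paper: both identify the Cartan matrix with the Gram matrix of the simple roots (using $(\alpha\mid\alpha)=2$), use $P(\Phi)=L^*(\Phi)$ in the simply laced case, and conclude $i(\Phi)=\vert L^*(\Phi):L(\Phi)\vert=\Det(C)$. The only difference is that where the paper cites the lattice-theoretic identity $\vert L^*(\Phi):L(\Phi)\vert=\Det(M)$ for the Gram matrix $M$ from the literature, you prove it in-line via the dual basis of fundamental weights and the base change $\alpha_i=\sum_j C_{ij}\omega_j$, and you additionally observe that positive definiteness makes $\Det(C)>0$, removing the sign ambiguity the paper leaves implicit.
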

\begin{proof}
Let $\Delta=\{\alpha_1 , \ldots , \alpha_m\}\subseteq\Phi$ be a basis of the root system $\Phi$. Then $\Delta$ is a basis of $L(\Phi)$. Denote by $M$ the Gram matrix of $L(\Phi)$ with respect to $\Delta$. By general lattice theory (see
for instance \cite[Section 1.1]{Eb02}) one has
 \begin{align*}
  \vert L^*(\Phi) : L(\Phi) \vert = \Det(M).
 \end{align*}
Since $\Phi$ is simply laced, we have $L^*(\Phi) = P(\Phi)$ and hence
 $i(\Phi) = \vert L^*(\Phi) : L(\Phi) \vert$. Again since $\Phi$ is simply laced, we have $C=M$, which concludes the proof.
\end{proof}

We list $i(\Phi)$ for the irreducible, simply laced root systems. These can be found in \cite[Planches I, IV,V,VI,VII]{Bou81}.
\begin{table}[h!]
\begin{tabular}{|l|l|l|l|l|l|}
\hline
 Type of $\Phi$ & $A_n$  & $D_n$  & $E_6$  & $E_7$  & $E_8$ \\ \hline
 $i(\Phi)$ & $n+1$ & 4 & 3 & 2 & 1  \\ \hline
\end{tabular}
\end{table}

As a consequence we obtain the following result.
\begin{Proposition}
Let $\Phi$ be an irreducible, simply laced root system. Then $\Phi$ is determined by the pair $(\Rank(\Phi), i(\Phi))$. 
\end{Proposition}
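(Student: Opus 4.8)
The plan is to reduce the statement to a finite verification using the classification of the irreducible simply laced root systems together with the table of connection indices established just above. Recall that the irreducible simply laced types are exactly $A_n$ $(n\ge 1)$, $D_n$ $(n\ge 4)$ and $E_n$ $(n\in\{6,7,8\})$, with connection indices $i(A_n)=n+1$, $i(D_n)=4$, and $i(E_6)=3$, $i(E_7)=2$, $i(E_8)=1$. Since $\Rank(\Phi)$ is half of the pair, it suffices to show that any two non-isomorphic irreducible simply laced root systems of the \emph{same} rank $n$ already differ in their connection index; equivalently, that the assignment sending a type to the pair $(\Rank,i)$ is injective.

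First I would sort the types by rank. For $n\le 3$ the only irreducible simply laced type of rank $n$ is $A_n$, so nothing has to be distinguished. For $n\ge 4$ the candidates of rank $n$ are $A_n$ and $D_n$, together with $E_n$ in the three cases $n\in\{6,7,8\}$. The decisive numerical observation is that $i(A_n)=n+1\ge 5>4=i(D_n)$ for every $n\ge 4$, so $A_n$ and $D_n$ are always separated by their indices wherever both occur. The only way these two indices could coincide is $n+1=4$, i.e.\ $n=3$, which is excluded since $D_n$ is defined only for $n\ge 4$.

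Next I would dispose of the exceptional types. For $n\in\{6,7,8\}$ the extra type $E_n$ has index $3,2,1$ respectively, each strictly smaller than $4=i(D_n)$, and hence also strictly smaller than $n+1=i(A_n)$; moreover each of these ranks contains at most one exceptional type. Therefore, in every fixed rank the (at most three) admissible indices $n+1$, $4$ and $i(E_n)$ are pairwise distinct, so the index pins down the type within that rank.

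Combining the cases, no pair $(\Rank(\Phi),i(\Phi))$ is shared by two non-isomorphic irreducible simply laced root systems, which is exactly the assertion. I do not expect any genuine obstacle here beyond bookkeeping; the only point needing a moment's care is the boundary between ranks $3$ and $4$, where one must invoke the convention $D_n,\ n\ge 4$ (so that $D_3\cong A_3$ is not double-counted) in order to rule out the spurious collision $i(A_3)=i(D_3)=4$.
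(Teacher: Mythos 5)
Your proof is correct and is essentially the argument the paper intends: the Proposition is stated as an immediate consequence of the table of connection indices, and your case check by rank (using $i(A_n)=n+1>4=i(D_n)$ for $n\ge 4$ and $i(E_n)<4$ for $n\in\{6,7,8\}$, with the convention $D_n$ only for $n\ge 4$ handling the potential $A_3$ collision) is exactly the bookkeeping that justifies it. Nothing further is needed.
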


\noindent The following lemma seems to be folklore, but we could not find a proof in the literature, hence we state it here.

\begin{Lemma} \label{RootLatticeDeterminesRootSystem}
 Let $\Phi$ be a simply laced root system. Then the root lattice determines the 
 root system, that is, $$\Phi(L(\Phi))= \Phi.$$ 
\end{Lemma}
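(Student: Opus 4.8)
The inclusion $\Phi\subseteq\Phi(L(\Phi))$ is immediate: every root lies in $L(\Phi)=\spanz(\Phi)$, and since $\Phi$ is simply laced we have normalized $(\alpha\mid\alpha)=2$ for all $\alpha\in\Phi$, so each root belongs to $\{\alpha\in L(\Phi)\mid(\alpha\mid\alpha)=2\}=\Phi(L(\Phi))$. Note also that $L(\Phi)$ is an even lattice, since a basis of simple roots consists of vectors of norm~$2$; thus by the Proposition on even lattices the set $\Phi':=\Phi(L(\Phi))$ is itself a simply laced crystallographic root system in $\spanr(L(\Phi))=V$. The whole content of the lemma is therefore the reverse inclusion $\Phi'\subseteq\Phi$, i.e.\ that $\Phi$ and $\Phi'$ coincide as subsets of $V$.

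I would first reduce to the irreducible case. Writing $\Phi=\Phi_1\sqcup\cdots\sqcup\Phi_k$ as an orthogonal union of its irreducible components, the root lattice decomposes as an orthogonal direct sum $L(\Phi)=\bigoplus_j L(\Phi_j)$ of even lattices. For $x=\sum_j x_j\in L(\Phi)$ (with $x_j\in L(\Phi_j)$) of norm $2$, each $(x_j\mid x_j)$ is a non-negative even integer summing to $2$, so exactly one $x_j$ is nonzero and $x\in\Phi(L(\Phi_j))$; hence $\Phi(L(\Phi))=\bigsqcup_j\Phi(L(\Phi_j))$, and it suffices to treat each $\Phi_j$. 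So assume $\Phi$ irreducible. Then $\Phi'$ is irreducible too: if $\Phi'$ split off an orthogonal summand, the connected (irreducible) set $\Phi\subseteq\Phi'$ would lie in a single component, forcing $V=\spanr(\Phi)$ into a proper subspace, a contradiction.

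The key point is then a connection-index comparison. From $\Phi\subseteq\Phi'\subseteq L(\Phi)$ we get $L(\Phi)\subseteq L(\Phi')\subseteq L(\Phi)$, so $\Phi$ and $\Phi'$ share the same root lattice $L:=L(\Phi)=L(\Phi')$. Since both are simply laced, their weight lattices equal the common dual lattice $L^{*}$, whence $i(\Phi)=|L^{*}:L|=i(\Phi')$ (equivalently, both equal $\Det$ of the Gram matrix of $L$ via Proposition~\ref{ConnectionCartan}). As $\Phi$ and $\Phi'$ are irreducible simply laced root systems of the same rank $n=\dim V$ and the same connection index, the Proposition asserting that such a system is determined by the pair $(\Rank,i)$ shows they have the same type, hence the same (finite) number of roots. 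Combined with $\Phi\subseteq\Phi'$ this forces $\Phi=\Phi'$.

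\textbf{Main obstacle.} The one genuinely non-formal ingredient is the classification-based statement that an irreducible simply laced root system is determined by its rank and connection index; everything else is lattice bookkeeping. I expect the only care needed elsewhere is verifying that passing to norm-$2$ vectors respects the orthogonal decomposition (so that the reduction to irreducible components is valid) and that $\Phi'$ inherits irreducibility and the same root lattice from $\Phi$.
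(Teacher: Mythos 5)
Your proof is correct and takes essentially the same route as the paper's: both establish that $\Phi$ and $\Phi(L(\Phi))$ have the same rank and the same connection index (via the coincidence of Cartan and Gram matrices, i.e.\ Proposition~\ref{ConnectionCartan}) and then invoke the proposition that an irreducible simply laced root system is determined by the pair $(\Rank(\Phi), i(\Phi))$. If anything you are more careful than the paper, which cites that proposition without explicitly reducing to irreducible components or spelling out that the inclusion $\Phi\subseteq\Phi(L(\Phi))$ together with equal (finite) cardinality forces equality; your orthogonal-decomposition step and final counting argument fill these small gaps.
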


\begin{proof}
By the previous proposition, we have to show that the rank and connection indices of $\Phi$ and $\Phi(L(\Phi))$ coincide. Since $\Phi \subseteq \Phi(L(\Phi))$, we have $\Rank(\Phi)\leq \Rank(\Phi(L(\Phi)))$. On the other hand, the rank of $\Phi(L(\Phi)) $ is bounded above by the dimension of the ambient vector space which equals $\Rank(\Phi)$.

In the proof of Proposition \ref{ConnectionCartan}, we showed that the Cartan matrix of a root system and the Gram matrix of the corresponding root lattice coincide. Denote by $C_{\Phi}$ the Cartan matrix with respect to $\Phi$. Then
$$\det(C_{\Phi})=\Vol(L(\Phi))= \Vol(L(\Phi(L(\Phi))))= \det (C_{\Phi(L(\Phi))}),$$
which yields $i(\Phi)=i(\Phi(L(\Phi)))$ by Proposition \ref{ConnectionCartan}.
\end{proof}

\begin{Lemma} \label{RootsLength2}
Let $\Phi$ be as in \ref{RootLatticeDeterminesRootSystem} and $\Phi' \subseteq \Phi$ be a root subsystem. Then
  $L(\Phi') \cap \Phi = \Phi'$.
\end{Lemma}

\begin{proof} We have the equalities
 \begin{align*}
  L(\Phi') \cap \Phi \stackrel{\ref{RootLatticeDeterminesRootSystem}}{=} L(\Phi') \cap \{ \alpha \in L(\Phi) \mid (\alpha \mid \alpha) =2 \}
  = \{  \alpha \in L(\Phi') \mid (\alpha \mid \alpha)=2 \} \stackrel{\ref{RootLatticeDeterminesRootSystem}}{=} \Phi'.
 \end{align*}
\end{proof}


\begin{Notation} Let $\Phi$ be a finite root system and $R\subseteq \Phi$ a set of roots. We denote by $W_R$ the group $\langle s_r~\vert~r \in R\rangle$.
\end{Notation}

Note that this is consistent with the notation introduced in Definition \ref{def:root} in case $R=\Phi$.

\begin{Proposition} \label{PropVoigt}
Let $\Phi$ be a crystallographic root system, 
$\Phi' \subseteq \Phi$ be a root subsystem, $R:= \{ \beta_1 , \ldots , \beta_k \} \subseteq  \Phi'$ be a set of roots. The following statements are equivalent:
 \begin{enumerate}
  \item[(a)] The root subsystem $\Phi'$ is the smallest root subsystem of $\Phi$ containing $R$ (i.e., the intersection of all root subsystems containing $R$).
  \item[(b)] $\Phi' =W_R(R)$.
  \item[(c)] $W_{\Phi'}= W_R$.
 \end{enumerate}
 Moreover, if any of the above equivalent conditions is satisfied, then 
 \begin{enumerate}
 \item[(d)] $L(\Phi') = L(R)$.
 \end{enumerate}
\end{Proposition}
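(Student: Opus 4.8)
The plan is to isolate a single unconditional fact from which the equivalence of (a), (b), (c) follows cleanly, and then to deduce (d) separately using the crystallographic hypothesis.

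\emph{The key observation} is that $W_R(R)$ is always the smallest root subsystem of $\Phi$ containing $R$, with no hypothesis needed. First I would check that $W_R(R)$ is itself a root subsystem of $\Phi$: it is a finite subset of $\Phi$, since $W_R=\langle s_r\mid r\in R\rangle\subseteq W_\Phi$ preserves $\Phi$ and $R\subseteq\Phi$; it spans $V':=\spanr(R)$, which is $W_R$-stable because $s_r(v)=v-\langle v,r\rangle r\in V'$ for $v\in V'$; and it is closed under the relevant reflections, because for $\alpha=w(r)\in W_R(R)$ with $w\in W_R$, $r\in R$, one has $s_\alpha=s_{w(r)}=w\,s_r\,w^{-1}\in W_R$, so $s_\alpha$ permutes the $W_R$-orbit $W_R(R)$; condition (3) of Definition~\ref{def:root} is inherited from $\Phi$. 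Next, if $\Psi$ is \emph{any} root subsystem containing $R$, then $\Psi$ is stable under each $s_r$ ($r\in R$), hence under all of $W_R$, so $W_R(R)\subseteq\Psi$. Thus $W_R(R)$ lies in every root subsystem containing $R$ while being one of them, so it equals their intersection. This is exactly the assertion that (a) and (b) are the same statement.

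\emph{For (b) $\Leftrightarrow$ (c)}, the implication (b)$\Rightarrow$(c) is the same conjugation identity: $W_{\Phi'}=\langle s_\alpha\mid\alpha\in W_R(R)\rangle$ and each $s_{w(r)}=w\,s_r\,w^{-1}\in W_R$ gives $W_{\Phi'}\subseteq W_R$, while the reverse inclusion is clear from $R\subseteq\Phi'$. The substantive direction is (c)$\Rightarrow$(b), which I expect to be the main obstacle. Write $\Psi:=W_R(R)$; by the key observation $\Psi$ is a root system, and the identity just used (applied to $\Psi$) gives $W_\Psi=W_R$, so $W_R$ is the reflection group of $\Psi$. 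The crucial input is then the standard classification of reflections in a finite real reflection group: every reflection of $W_R=W_\Psi$ has the form $s_\gamma$ with $\gamma\in\Psi$. Given $\alpha\in\Phi'$, hypothesis (c) yields $s_\alpha\in W_{\Phi'}=W_R$, so $s_\alpha=s_\gamma$ for some $\gamma\in\Psi$; comparing reflecting hyperplanes forces $\RR\alpha=\RR\gamma$, and since $\alpha,\gamma\in\Phi$ and $\Phi\cap\RR\gamma=\{\pm\gamma\}$ we obtain $\alpha=\pm\gamma\in\Psi$. Hence $\Phi'\subseteq\Psi=W_R(R)$, and the reverse inclusion is again the minimality of $W_R(R)$. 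This proves (b).

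\emph{Finally, for (d)}, assume (b), so $\Phi'=W_R(R)$. The inclusion $L(R)\subseteq L(\Phi')$ is immediate from $R\subseteq\Phi'$. For the reverse I would show that $L(R)=\spanz(R)$ is stable under $W_R$: for $\alpha\in L(R)$ and $r\in R$ one has $s_r(\alpha)=\alpha-\langle\alpha,r\rangle r$, and $\langle\alpha,r\rangle\in\ZZ$ because $\Phi$ is crystallographic and $\alpha$ is a $\ZZ$-combination of roots, so $s_r(\alpha)\in L(R)$. As $L(R)$ is thus $W_R$-stable and contains $R$, it contains $W_R(R)=\Phi'$, giving $L(\Phi')=\spanz(\Phi')\subseteq L(R)$ and hence equality. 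The crystallographic hypothesis is used only at this last step.
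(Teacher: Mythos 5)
Your proof is correct, and while it uses the same basic ingredients as the paper (the conjugation identity $s_{w(r)}=w s_r w^{-1}$, minimality of the $W_R$-orbit of $R$, and integrality from the crystallographic condition), it is organized around a different decomposition. The paper proves the cycle (a)$\Rightarrow$(b)$\Rightarrow$(c)$\Rightarrow$(a): its (c)$\Rightarrow$(a) step introduces the smallest root subsystem $\Phi''$ containing $R$ and asserts that $\Phi''\subsetneq\Phi'$ would force $W_{\Phi''}\subsetneq W_{\Phi'}$ --- a step that tacitly rests on exactly the fact you make explicit, namely that every reflection in the finite reflection group $W_\Psi$ is of the form $s_\gamma$ with $\gamma\in\Psi$; your (c)$\Rightarrow$(b) argument, comparing reflecting hyperplanes and using $\Phi\cap\RR\gamma=\{\pm\gamma\}$, is therefore a more honest account of the same mechanism. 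The more substantial divergence is in (d): the paper deduces it from (c) by invoking Dyer's description of the reflections of a reflection subgroup \cite[Corollary 3.11(ii)]{Dy90} to write each root of $\Phi'$ as $w(\beta_i)$ and then expands word by word, whereas you deduce it from (b) by observing that $\langle\alpha,r\rangle\in\ZZ$ for all $\alpha\in L(R)$, $r\in R$, so that $L(R)$ is $W_R$-stable and hence contains $W_R(R)=\Phi'$. Your route buys self-containedness (no appeal to Dyer's theorem, only the standard classification of reflections in a finite real reflection group, which the paper needs anyway) and isolates the unconditional statement that $W_R(R)$ is the smallest root subsystem containing $R$, which makes the equivalence of (a) and (b) immediate rather than argued in two separate directions; the paper's version, by contrast, is shorter on the page precisely because it leaves these verifications implicit.
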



\begin{proof}
  Obviously $W_R(R)$ is a root system with $R \subseteq W_R(R)$. Thus if (a) holds, then $\Phi^\prime \subseteq W_R(R)$.
  As $W_R(R)\subseteq W_{\Phi'}(\Phi')=\Phi'$, it follows that (a) implies (b). The converse direction follows from the definition of a root subsystem.

Statement (b) implies that $R\subseteq\Phi'$ and therefore we have $W_R \subseteq W_{\Phi'}$.
  We show that in this case $W_{\Phi'} \subseteq W_R$. To this end, let $\alpha \in \Phi' = W_R(R)$. Then 
  $\alpha= w(\beta_i)$ for some $w\in W_R$ and $1 \leq i \leq k$. Then 
  \begin{align*}
   s_{\alpha} = s_{w(\beta_i)}= w s_{\beta_i} w^{-1} \in W_R,
  \end{align*}
  which shows the claim. Thus (b) implies (c).
  
Assume (c) and let  $\Phi''$ be the smallest root subsystem of $\Phi$ containing $R$. Then $W_{\Phi'} = W_R\subseteq W_{\Phi''}$. By definition of $\Phi''$ we have $\Phi'' \subseteq \Phi'$. If $\Phi'' \subsetneq \Phi'$ then $W_{\Phi''}\subsetneq W_\Phi$, a contradiction. Hence $\Phi'' = \Phi'$, which shows $(a)$.

It remains to show that $(3)$ implies $(4)$. 
So assume $(c)$ and let $t_i:=s_{\beta_i}$, $1\leq i\leq k$. Let $T_{\Phi'}$ be the set of reflections in $W_{\Phi'}$. By \cite[Corollary 3.11 (ii)]{Dy90}, we have $T_{\Phi'}=\{wt_i w^{-1}~|~1\leq i\leq k, w\in W_{\Phi'}\}$. In particular any root in $\Phi'$ has the form $w(\beta_i)$ for some $w\in W_{\Phi'}$, $1\leq i\leq k$. Since $W_{\Phi'}=\left\langle t_1,\dots, t_k\right\rangle$, we can write $w=t_{i_1}\cdots t_{i_m}$ with $1\leq i_j\leq k$ for each $1\leq j\leq m$. Since $\Phi'$ is crystallographic it follows that $w(\beta_i)=t_{i_1}\cdots t_{i_m}(\beta_i)$ is an integral linear combination of the $\beta_j$'s, hence that $\Phi'\subseteq L(R)$. Since $R\subseteq \Phi'$ we get that $L(R)=L(\Phi')$, which proves (d).
\end{proof}

\begin{remark} \label{RemVoigt}
Notice that condition (d) in Proposition \ref{PropVoigt} is in general not equivalent  to conditions (a)-(c). For example, if $\Phi$ is of type $B_2$, then one can choose two orthogonal roots $\alpha$, $\beta$ generating a proper root subsystem of type $A_1\times A_1$ while one has $L(\{\alpha,\beta\})=L(\Phi)$. Nevertheless one has the equivalence for simply laced root systems:
\end{remark}
\begin{Lemma} \label{LemVoigt}
Let $\Phi, \Phi',R$ be as in Proposition \ref{PropVoigt} and assume in addition that $\Phi$ is simply laced. Then condition (d) in Proposition \ref{PropVoigt} is equivalent to any of the conditions (a), (b), (c).
\end{Lemma}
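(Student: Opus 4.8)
The plan is to leverage the equivalences already contained in Proposition \ref{PropVoigt}. There the conditions (a), (b), (c) are shown to be equivalent for any crystallographic root system, and (c) is shown to imply (d) unconditionally. Hence, to upgrade (d) to a fully equivalent condition, it suffices to prove the single implication (d) $\Rightarrow$ (a) under the additional simply laced hypothesis; the remaining equivalences then follow formally.

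To carry this out I would first pass to $\Phi''$, the smallest root subsystem of $\Phi$ containing $R$ (the intersection of all root subsystems of $\Phi$ containing $R$). By construction $R \subseteq \Phi''$, and $\Phi''$ satisfies condition (a) of Proposition \ref{PropVoigt} relative to $R$; applying that proposition to the triple $(\Phi, \Phi'', R)$ therefore yields $L(\Phi'') = L(R)$. Combining this with the hypothesis (d), which reads $L(\Phi') = L(R)$, gives the lattice equality $L(\Phi') = L(\Phi'')$.

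The decisive step is then to recover each subsystem from its lattice. Because $\Phi$ is simply laced, Lemma \ref{RootsLength2} applies to every root subsystem and gives both $L(\Phi') \cap \Phi = \Phi'$ and $L(\Phi'') \cap \Phi = \Phi''$. Intersecting the equal lattices $L(\Phi') = L(\Phi'')$ with $\Phi$ then forces $\Phi' = \Phi''$, so $\Phi'$ is the smallest root subsystem of $\Phi$ containing $R$; this is exactly condition (a), closing the cycle. The one place where the simply laced assumption is indispensable — and the heart of the argument — is this last appeal to Lemma \ref{RootsLength2} (which in turn rests on Lemma \ref{RootLatticeDeterminesRootSystem}, i.e. on the identification of roots with the lattice vectors of squared length $2$). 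As Remark \ref{RemVoigt} already illustrates, without this identification the implication genuinely fails: the type $B_2$ pair of orthogonal roots generates a proper subsystem of type $A_1 \times A_1$ yet spans the full root lattice, so (d) holds while (a)--(c) do not.
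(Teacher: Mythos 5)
Your proposal is correct and follows essentially the same route as the paper: both reduce the lemma to the single implication (d) $\Rightarrow$ (a) via Proposition \ref{PropVoigt}, compare the lattice of $\Phi'$ with that of a competing root subsystem $\Phi''$ containing $R$, and recover $\Phi' = \Phi''$ by intersecting with $\Phi$ through Lemma \ref{RootsLength2}. The only (harmless) difference is that you invoke the implication (a) $\Rightarrow$ (d) of Proposition \ref{PropVoigt} to get the equality $L(\Phi'') = L(R)$, whereas the paper gets by with the trivial inclusion $L(R) \subseteq L(\Phi'')$, which already yields $\Phi' \subseteq \Phi''$ and hence equality.
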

\begin{proof}
Since the conditions (a)-(c) are equivalent it suffices to show that (d) implies (a). Assume that there exists a root subsystem $\Phi''$ of $\Phi$ with 
$\Phi'' \subseteq \Phi'$ and $R \subseteq \Phi''$. It follows that 
$L(\Phi') = L(R) \subseteq L(\Phi'')$. One then has
\begin{align*}
\Phi' \stackrel{\ref{RootsLength2}}{=}  L(\Phi') \cap \Phi \subseteq L(\Phi'') \cap \Phi 
\stackrel{\ref{RootsLength2}}{=} \Phi'',
\end{align*}
Hence $\Phi'=\Phi''$, which concludes the proof.
\end{proof}

\medskip

\noindent The following two results are useful tools for the next section. The following proposition and its proof are borrowed from \cite[Prop. 1.5.2]{Voi85}.
\begin{Proposition} \label{FormelConnecIndex}
 Let $\Phi$ be a simply laced root system and $\Phi' \subseteq \Phi$ be a root subsystem with $\Rank(\Phi')=m=\Rank(\Phi)$. Then
 \begin{enumerate}
  \item[(a)] $\left| L(\Phi) : L (\Phi') \right| = \left| P(\Phi') : P(\Phi) \right|$
  \item[(b)] $\left| L(\Phi) : L (\Phi') \right| = \sqrt{i(\Phi') \cdot i(\Phi)^{-1}}$
 \end{enumerate}
\end{Proposition}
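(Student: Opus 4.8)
The plan is to use that, since $\Rank(\Phi')=\Rank(\Phi)=m$, the two root lattices $L(\Phi')\subseteq L(\Phi)$ are both full-rank lattices in $V=\spanr(\Phi)$, and that in the simply laced case the weight lattice is the dual root lattice, so $P(\Phi)=L^*(\Phi)$ and $P(\Phi')=L^*(\Phi')$ (as noted after the definition of the weight lattice). Thus both statements amount to comparing the chain of full-rank lattices $L(\Phi')\subseteq L(\Phi)$ with its dual chain.

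For part (a) I would invoke the standard duality for full-rank lattices: if $L'\subseteq L$ are full-rank lattices in $V$, then $L^*\subseteq (L')^*$ and $\left|(L')^*:L^*\right|=\left|L:L'\right|$. Indeed both indices equal the ratio of covolumes of the respective lattices, and the covolume of a dual lattice is the reciprocal of the covolume of the lattice; this is exactly the lattice theory already used in the proof of Proposition \ref{ConnectionCartan} (cf. \cite[Section 1.1]{Eb02}). Applying this with $L'=L(\Phi')$ and $L=L(\Phi)$, and substituting $P=L^*$, yields $\left|P(\Phi'):P(\Phi)\right|=\left|L(\Phi):L(\Phi')\right|$, which is (a). In particular this also records the inclusion $P(\Phi)\subseteq P(\Phi')$, needed below.

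For part (b) I would set $a:=\left|L(\Phi):L(\Phi')\right|$ and use multiplicativity of the index along the tower of full-rank lattices
$$L(\Phi')\subseteq L(\Phi)\subseteq P(\Phi)\subseteq P(\Phi'),$$
the last inclusion being the one obtained in part (a). This gives
$$\left|P(\Phi'):L(\Phi')\right|=\left|P(\Phi'):P(\Phi)\right|\cdot\left|P(\Phi):L(\Phi)\right|\cdot\left|L(\Phi):L(\Phi')\right|.$$
By definition of the connection index the left-hand side is $i(\Phi')$ and the middle factor on the right is $i(\Phi)$, while part (a) identifies the first factor as $a$; hence $i(\Phi')=a\cdot i(\Phi)\cdot a=a^2\,i(\Phi)$. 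Solving for $a$ gives $\left|L(\Phi):L(\Phi')\right|=\sqrt{i(\Phi')\cdot i(\Phi)^{-1}}$, which is (b).

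The only non-formal ingredient is the index-reversal identity for dual lattices used in part (a); everything else is bookkeeping with the multiplicativity of lattice indices and the definition of the connection index. I expect this duality step to be the main point, and I would rely on the covolume description already invoked for Proposition \ref{ConnectionCartan} rather than reprove it from scratch.
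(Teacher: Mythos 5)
Your proposal is correct and follows essentially the same route as the paper: the identity $i(\Phi')=\left|L(\Phi):L(\Phi')\right|^2\, i(\Phi)$ is obtained in both cases from multiplicativity of the index along the tower $L(\Phi')\subseteq L(\Phi)\subseteq P(\Phi)\subseteq P(\Phi')$ together with part (a). The only cosmetic difference is in part (a): you cite the standard index-reversal for dual full-rank lattices (via covolumes), whereas the paper proves that same fact by hand, choosing a lattice isomorphism $\mathcal{L}\colon L(\Phi)\to L(\Phi')$ and computing $P(\Phi')=\left(\mathcal{L}^{t}\right)^{-1}(P(\Phi))$, so that both indices equal $\vert\Det(\mathcal{L})\vert$.
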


\begin{proof}   
 Since $\Phi$ and $\Phi'$ have the same rank, there is a lattice isomorphism 
 $\mathcal{L}: L(\Phi) \stackrel{\sim}{\longrightarrow} L(\Phi')$, 
 hence $| L(\Phi) : L (\Phi') | = \vert \text{det}(\mathcal{L}) \vert$
(see \cite[Section 1.1]{Eb02}). Furthermore we have
 \begin{align*}
  P(\Phi') & = \{ x \in V \mid (x\mid y) \in \mathbb{Z},~\forall y \in L(\Phi') \}
            = \{ x \in V \mid (x\mid\mathcal{L}(v)) \in \mathbb{Z},~\forall v \in L(\Phi) \}\\
           & = \{ x \in V \mid (\mathcal{L}^t(x)\mid v) \in \mathbb{Z},~\forall v \in L(\Phi) \}
            = \{ x \in V \mid \mathcal{L}^t(x) \in P(\Phi) \}\\
           & = \left( \mathcal{L}^t \right)^{-1}(P(\Phi)).
 \end{align*}
 Thus  $\left[P(\Phi') : P(\Phi) \right] = \vert \text{det}(\mathcal{L}^t)\vert = \vert \text{det}(\mathcal{L})\vert$, which shows (a).
 We have $$L(\Phi') \subseteq L(\Phi) \subseteq P(\Phi) \subseteq P(\Phi').$$ It follows that
 \begin{align*}
  | L(\Phi) : L (\Phi') | \underbrace{| P(\Phi) : L (\Phi) |}_{=i(\Phi)} 
  \underbrace{| P(\Phi') : P (\Phi) |}_{\stackrel{}{=}| L(\Phi) : L (\Phi') |}
  =\underbrace{| P(\Phi') : L (\Phi') |}_{=i(\Phi')},
 \end{align*}
which concludes the proof.
\end{proof}

\medskip

For a simply laced root system $\Phi$ we extend the definition of connection index to subsets of $\Phi$. For an arbitrary subset $R \subseteq \Phi$ we define $i(R):= | L^*(R): L(R) |$. Note that $i(R)$ is well-defined by 
Proposition \ref{PropVoigt}, because $i(R) = i(\Phi')$, where $\Phi'$ is the smallest root subsystem of $\Phi$
in $\spanr(R)$ containing $R$.

\medskip
The following theorem is part of the Diploma thesis of Kluitmann \cite{Kluitmann}. 

\begin{theorem} 
\label{EqualConnecIndex}
Let $\Phi$ be a simply laced root system. Let $w \in W_{\Phi}$ and let  $(s_{\alpha_1}, \ldots, s_{\alpha_k})$, $(s_{\beta_1}, \ldots , s_{\beta_k}) \in \Red_T(w)$, where $\alpha_i, \beta_i\in\Phi$, for $1\leq i\leq k$.  
 Then for $R:=\{ \alpha_1 , \ldots , \alpha_k\}$ and $Q:=\{ \beta_1 , \ldots , \beta_k \}$ we have
 \begin{align*}
  i(R) = i(Q).
 \end{align*}
\end{theorem}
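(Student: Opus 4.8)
The plan is to show that $i(R)$ is really an invariant of $w$ alone, by identifying it with a determinant attached to $w$ that visibly does not see the chosen factorisation. Concretely, I would prove that for every $(s_{\alpha_1},\ldots,s_{\alpha_k})\in\Red_T(w)$ one has
$$\Det\big((1-w)|_{\mathrm{Mov}(w)}\big)=i(R),\qquad R=\{\alpha_1,\ldots,\alpha_k\}.$$
Since $\mathrm{Mov}(w)=\mathrm{im}(w-1)$ is $w$-invariant and both it and the restriction $(1-w)|_{\mathrm{Mov}(w)}$ depend only on $w$, the left-hand side is manifestly independent of the reduced decomposition. Applying the identity to $R$ and then to $Q$ gives $i(R)=\Det((1-w)|_{\mathrm{Mov}(w)})=i(Q)$ immediately, which is the assertion.

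First I would record the structural facts (essentially Carter, cf.\ \cite{Carter}). For a reduced $T$-decomposition $w=s_{\alpha_1}\cdots s_{\alpha_k}$ one always has $\mathrm{Mov}(w)\subseteq\spanr(\alpha_1,\ldots,\alpha_k)$, while $k=\lt(w)=\dim\mathrm{Mov}(w)$; hence $\alpha_1,\ldots,\alpha_k$ are linearly independent and span $\mathrm{Mov}(w)$, so they form a $\ZZ$-basis of $L(R)=\spanz(R)$. With respect to this basis the Gram matrix $G=\big((\alpha_i\mid\alpha_j)\big)_{i,j}$ is integral (crystallographicity) and positive definite, and the lattice discriminant formula already invoked in the proof of Proposition \ref{ConnectionCartan} yields $\Det(G)=|L^*(R):L(R)|=i(R)$.

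The heart of the argument is the computation of $\Det\big((1-w)|_{\mathrm{Mov}(w)}\big)$. I would use the telescoping identity
$$1-s_{\alpha_1}\cdots s_{\alpha_k}=\sum_{j=1}^{k}s_{\alpha_1}\cdots s_{\alpha_{j-1}}\,(1-s_{\alpha_j})$$
together with the observation that, in the simply laced normalisation $(\alpha_j\mid\alpha_j)=2$, the operator $1-s_{\alpha_j}$ is the rank-one map $v\mapsto(v\mid\alpha_j)\alpha_j$. Writing $\gamma_j:=s_{\alpha_1}\cdots s_{\alpha_{j-1}}(\alpha_j)$, this gives $(1-w)(v)=\sum_{j}(v\mid\alpha_j)\gamma_j$, so on $\mathrm{Mov}(w)$ the map $1-w$ factors as $\Gamma\circ A$, where $A$ sends $v$ to $\big((v\mid\alpha_1),\ldots,(v\mid\alpha_k)\big)$ and $\Gamma$ sends the $j$-th coordinate vector to $\gamma_j$. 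In the basis $(\alpha_i)$ the matrix of $A$ is exactly $G$; and because each $s_{\alpha_i}$ with $i<j$ acts by $\alpha_j\mapsto\alpha_j-(\alpha_i\mid\alpha_j)\alpha_i$, one has $\gamma_j=\alpha_j+\sum_{i<j}c_{ij}\alpha_i$, so the matrix of $\Gamma$ is upper triangular with $1$'s on the diagonal. Therefore $\Det\big((1-w)|_{\mathrm{Mov}(w)}\big)=\Det(\Gamma)\,\Det(G)=\Det(G)=i(R)$.

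The only point requiring care is the bookkeeping in this last step: one must verify that the telescoping sum reassembles as the factorisation $\Gamma\circ A$ on the $k$-dimensional space $\mathrm{Mov}(w)$ rather than on all of $V$, and that the $\alpha_j$-coordinate of $\gamma_j$ is genuinely $1$ (strict triangularity below the diagonal). Both follow from $\mathrm{Mov}(w)=\spanr(\alpha_1,\ldots,\alpha_k)$ and from the explicit formula $s_{\alpha_i}(\alpha_j)=\alpha_j-(\alpha_i\mid\alpha_j)\alpha_i$, so no essential difficulty remains once the structural facts of the second paragraph are in place. I would stress that the simply laced hypothesis enters precisely where $(\alpha_j\mid\alpha_j)=2$ is used: once to linearise $1-s_{\alpha_j}$ into the form $v\mapsto(v\mid\alpha_j)\alpha_j$, and once to identify the Cartan and Gram matrices so that $\Det(G)=i(R)$.
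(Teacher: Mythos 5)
Your proposal is correct and is essentially the paper's own argument: both reduce the statement to the identity $i(R)=\vert\Det\big((w-1)\vert_{\mathrm{Mov}(w)}\big)\vert$, an invariant of $w$ alone, resting on Carter's lemma (so that $R$ is a basis of $\mathrm{Mov}(w)$, with $k=\lt(w)=\dim\mathrm{Mov}(w)$) and on the unitriangularity of the vectors $\gamma_j=s_{\alpha_1}\cdots s_{\alpha_{j-1}}(\alpha_j)$ with respect to that basis. The paper merely packages the determinant computation differently, applying $w-1$ to the dual basis $\{\alpha_1^*,\ldots,\alpha_k^*\}$ of $L^*(R)$ — which produces exactly $\theta_j=-\gamma_j$ and exhibits $(w-1)$ as a bijection $L^*(R)\to L(R)$ whose index is the determinant — whereas you reach the same determinant through the telescoping factorization $1-w=\Gamma\circ A$ together with $\det(G)=i(R)$; this is the same argument in different bookkeeping.
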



\begin{proof}
 Consider $L(R)$ and $L^*(R)$ (respectively $L(Q)$ and $L^*(Q)$) as lattices in $V':= \spanr(R)$ (respectively in $V'':= \spanr(Q)$). By \cite[Lemma 3]{Carter} the set $R$ is a basis for $L(R)$. Let 
 $\{\alpha^*_1 , \ldots , \alpha^*_k \}$ be the basis of $L^*(R)$ dual to $R$. In particular, $s_{\alpha_j}(\alpha^*_i) =\alpha^*_i$ for $i \neq j$ and 
 $s_{\alpha_i}(\alpha^*_i)= \alpha^*_i - \alpha_i$. Therefore
 \begin{align*}
  \theta_i:= (w - 1)(\alpha^*_i) = - s_{\alpha_1} \cdots s_{\alpha_{i-1}}(\alpha_i) \in L(R) \cap \Phi.
 \end{align*}

 Note that $\theta_1 = -\alpha_1$, $\theta_2 \in -\alpha_2+ \spanz(\alpha_1)$, and more generally $$\theta_i\in -\alpha_i+\spanz(\alpha_1,\dots, \alpha_{i-1}).$$ It follows that $\{\theta_1, \ldots , \theta_k\}$ is a basis of $V'$ and that 
 the map 
 \begin{align*}
  (w - 1)_{\vert V'}:L^*(R) \rightarrow L(R)
 \end{align*}
 is bijective.

Thus $i(R) = \vert \Det(w - 1)_{\vert V'} \vert$. The same argument with $Q$ instead of $R$ yields that
 $i(Q) = \vert \Det(w - 1)_{\vert V''} \vert$. By the proof of \cite[Theorem 2.4.7]{Arm09} we have that $R$ and $Q$ are both bases for $\text{Mov}(w)$, hence $V'=V'' = \text{Mov}(w)$ and hence 
 $i(R) = i(Q)$. 
\end{proof}




\section{Reflection subgroups related to prefixes of quasi-Coxeter elements}\label{sec:quasi}

In this section, we prove Theorem~\ref{th:maintheorem2} for simply laced dual
Coxeter systems, see Theorem~\ref{ThmA}.
Further we show that the reflections in a reduced $T$-decomposition of an element $w \in W$ generate a parabolic subgroup
whenever $w<_T w'$ for some quasi-Coxeter element $w'$.  Last not least we demonstrate that parabolic quasi-Coxeter elements coincide with parabolic Coxeter elements in types $A_n$, $B_n$ and $I_2(m)$.

Recall that for $w\in W$, we denote by $P_w$ the parabolic closure of $w$ (see Definition \ref{def:parabolic_closure}) and that $P_w =  C_{W}(\mathrm{Fix}(w))$.

\begin{theorem}
\label{ThmA}
Let $(W,T)$ be a simply laced dual Coxeter system of rank $n$. If $w\in W$ is a parabolic quasi-Coxeter element, then the reflections in any reduced $T$-decomposition of $w$ generate the parabolic subgroup $P_w$. That is, for each $(t_1,\ldots , t_m) \in \Red_T(w)$ we have $P_w= \langle t_1, \ldots, t_m \rangle$.
\end{theorem}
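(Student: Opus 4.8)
The plan is to first pin down $P_w$ using the distinguished decomposition coming from the parabolic quasi-Coxeter hypothesis, and then to transport the conclusion to an arbitrary reduced decomposition by a connection-index comparison; this last transport is where simple-lacedness is indispensable.

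First I would fix the reduced decomposition $(t_1,\dots,t_m)$ with $W':=\langle t_1,\dots,t_m\rangle$ parabolic, guaranteed by the definition of a parabolic quasi-Coxeter element, and show $W'=P_w$. Writing $t_i=s_{\alpha_i}$, the roots $\alpha_1,\dots,\alpha_m$ form a basis of $\mathrm{Mov}(w)$ (as in the proof of \cite[Theorem~2.4.7]{Arm09} quoted for Theorem~\ref{EqualConnecIndex}), so each $\alpha_i\perp\mathrm{Fix}(w)$ and hence $t_i\in C_W(\mathrm{Fix}(w))=P_w$; thus $W'\subseteq P_w$. Both subgroups are parabolic of rank $m$: indeed $W'$ has rank $m$, while the rank of $P_w$ equals $\dim\mathrm{Mov}(w)=\lt(w)=m$. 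Since $W'\subseteq P_w$ forces $\mathrm{Fix}(P_w)\subseteq\mathrm{Fix}(W')$ and these two fixed spaces have equal dimension $n-m$, they coincide; as both groups are parabolic in the classical sense (Corollary~\ref{cor:same}), each equals the pointwise stabiliser of its own fixed space, so $W'=C_W(\mathrm{Fix}(W'))=C_W(\mathrm{Fix}(P_w))=P_w$.

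Next, given an arbitrary $(q_1,\dots,q_m)=(s_{\beta_1},\dots,s_{\beta_m})\in\Red_T(w)$, set $W'':=\langle q_1,\dots,q_m\rangle$, $R:=\{\alpha_1,\dots,\alpha_m\}$ and $Q:=\{\beta_1,\dots,\beta_m\}$. Exactly as above the $\beta_i$ form a basis of $\mathrm{Mov}(w)$, so each $q_i\in P_w$ and $W''\subseteq P_w=W'$. Let $\Phi_R$ and $\Phi_Q$ be the smallest root subsystems of $\Phi$ containing $R$ and $Q$ respectively; by Proposition~\ref{PropVoigt} they are the root systems of $W'$ and $W''$, and by Lemma~\ref{LemVoigt} one has $L(\Phi_R)=L(R)$, $L(\Phi_Q)=L(Q)$, together with $i(R)=i(\Phi_R)$ and $i(Q)=i(\Phi_Q)$. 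Both $\Phi_R$ and $\Phi_Q$ span $\mathrm{Mov}(w)$ and therefore have rank $m$, and $W''\subseteq W'$ gives $\Phi_Q\subseteq\Phi_R$.

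Finally I would invoke the equality of connection indices. By Theorem~\ref{EqualConnecIndex} we have $i(R)=i(Q)$, that is $i(\Phi_Q)=i(\Phi_R)$. Applying Proposition~\ref{FormelConnecIndex}(b) to the equal-rank inclusion $\Phi_Q\subseteq\Phi_R$ yields $|L(\Phi_R):L(\Phi_Q)|=\sqrt{i(\Phi_Q)\,i(\Phi_R)^{-1}}=1$, so $L(\Phi_Q)=L(\Phi_R)$. Then Lemma~\ref{RootsLength2}, applied inside $\Phi_R$, gives $\Phi_Q=L(\Phi_Q)\cap\Phi_R=L(\Phi_R)\cap\Phi_R=\Phi_R$, whence $W''=W_{\Phi_Q}=W_{\Phi_R}=W'=P_w$. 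I expect this last paragraph to be the crux: it rests entirely on Kluitmann's theorem that all reduced decompositions share a single connection index and on the lattice-index formula, both of which genuinely require $\Phi$ to be simply laced (Remark~\ref{RemVoigt} exhibits the $B_2$ phenomenon where equal root lattices do not force equal root subsystems).
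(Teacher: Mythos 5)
Your proposal is correct and follows essentially the same route as the paper: identify $\langle t_1,\dots,t_m\rangle$ with $P_w$, then use Kluitmann's equality of connection indices (Theorem~\ref{EqualConnecIndex}) together with Proposition~\ref{FormelConnecIndex} and the simply-laced rigidity of Lemma~\ref{RootsLength2}/Lemma~\ref{LemVoigt} to force any other reduced decomposition to generate the same group. The only differences are cosmetic: the paper gets $W'=P_w$ in one line from the minimality of the parabolic closure (since $w\in W'$ and $W'$ is classically parabolic by Corollary~\ref{cor:same}) rather than your fixed-space dimension count, and the equalities $L(\Phi_R)=L(R)$, $L(\Phi_Q)=L(Q)$ are really Proposition~\ref{PropVoigt}(d), with Lemma~\ref{LemVoigt} needed only for the converse direction you invoke at the end.
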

\begin{proof}
By the definition of a parabolic quasi-Coxeter element, there exists $(t_1,\dots, t_m)\in\Red_T(w)$ such that $P:=\langle t_1 , \ldots , t_m \rangle$ is a parabolic subgroup. By Lemma~\ref{lem:inclusionfixed}, we have $P \subseteq C_{W}(\mathrm{Fix}(w))=P_w$. Since $w\in P$, we have by definition of the parabolic closure that $P=P_w$. 
Let $(q_1,\dots, q_m)\in \Red_T(w)$. Then for all $1\leq i\leq m$ we have that $q_i <_T w$, which yields that $q_i$ is in $C_{W}(\mathrm{Fix}(w)) = P_w$. Thus $W':= \langle q_1, \ldots q_m\rangle$ is a subgroup of $P_w$.
 Let $\Phi$ be the root system of $P_w$ and $\beta_i\in\Phi$ be such that $q_i=s_{\beta_i}$, for $1\leq i\leq m$.  
 Then $L(\beta_1, \ldots,  \beta_m)$ is a sublattice of $L(\Phi)$ and $\Phi^\prime = L(\beta_1, \ldots , \beta_m) \cap \Phi$ is the smallest root subsystem of $\Phi$ that contains $\beta_1, \ldots, \beta_m$. Therefore
Theorem~\ref{EqualConnecIndex} yields that $ i(\Phi^\prime) = i(\Phi)$.
This implies $L(\Phi) = L(\Phi^\prime)$ by Proposition~\ref{FormelConnecIndex}.
Thus $W^\prime = W$ by Lemma~\ref{LemVoigt}.
\end{proof}

We will show in Corollary \ref{lem:parabquasicoxdividesII} that the following property of parabolic quasi-Coxeter elements does in fact characterize them.

\begin{Proposition}\label{lem:parabquasicoxdivides}
Let $(W,T)$ be a finite dual Coxeter system. If $w\in W$ is a parabolic quasi-Coxeter element, then there exists a quasi-Coxeter element $w'\in W$ such that $w<_T w'$.
\end{Proposition}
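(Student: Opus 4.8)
The plan is to extend $w$ to a quasi-Coxeter element by appending the ``missing'' simple reflections of a suitable simple system. By Definition~\ref{def:coxeter}(c) there is a simple system $S' = \{s_1',\dots,s_n'\}$ for $(W,T)$ and a reduced decomposition $(t_1,\dots,t_m) \in \Red_T(w)$ with $\langle t_1,\dots,t_m\rangle = \langle s_1',\dots,s_m'\rangle =: P$ for some $m \leq n$. I would set
$$w' := t_1 \cdots t_m\, s_{m+1}' \cdots s_n' = w\, s_{m+1}'\cdots s_n'.$$
Then $\langle t_1,\dots,t_m, s_{m+1}',\dots,s_n'\rangle = \langle s_1',\dots,s_n'\rangle = W$, so it suffices to prove that the tuple $(t_1,\dots,t_m,s_{m+1}',\dots,s_n')$ lies in $\Red_T(w')$; this exhibits $w'$ as a quasi-Coxeter element and simultaneously realizes $w$ as a prefix of a reduced decomposition of $w'$.

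The heart of the argument is a linear independence statement, for which I would invoke the standard fact (going back to \cite{Carter}, see also \cite[Theorem 2.4.7]{Arm09}) that a product $s_{\delta_1}\cdots s_{\delta_k}$ of reflections is a reduced $T$-decomposition of its value if and only if the corresponding roots $\delta_1,\dots,\delta_k$ are linearly independent. Write $\beta_i$ for the root of $t_i$ ($1\le i\le m$) and $\gamma_j$ for the simple root of $s_j'$ ($1\le j\le n$). Since $(t_1,\dots,t_m)$ is reduced, the roots $\beta_1,\dots,\beta_m$ are linearly independent; as they all lie in the root system of $P$, whose span is $\spanr(\gamma_1,\dots,\gamma_m)$, they in fact span $\spanr(\gamma_1,\dots,\gamma_m)$.

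I would then combine this with the fact that $\gamma_1,\dots,\gamma_n$ form a basis of $V$. In particular $\spanr(\gamma_1,\dots,\gamma_m) \cap \spanr(\gamma_{m+1},\dots,\gamma_n) = \{0\}$, so the $n$ vectors $\beta_1,\dots,\beta_m,\gamma_{m+1},\dots,\gamma_n$ are linearly independent, being $m$ vectors spanning the first summand together with the $n-m$ linearly independent vectors $\gamma_{m+1},\dots,\gamma_n$ spanning a complement. By the cited fact, $(t_1,\dots,t_m,s_{m+1}',\dots,s_n') \in \Red_T(w')$, whence $\lt(w') = n$ and $w'$ is a quasi-Coxeter element. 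Finally $w^{-1}w' = s_{m+1}'\cdots s_n'$ has reflection length $n-m$, again by linear independence of $\gamma_{m+1},\dots,\gamma_n$, so $\lt(w) + \lt(w^{-1}w') = m + (n-m) = n = \lt(w')$, which is exactly the condition $w <_T w'$.

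The hard part will be the linear independence of $\beta_1,\dots,\beta_m,\gamma_{m+1},\dots,\gamma_n$; everything else is bookkeeping. The subtle point there is to pass from independence of the $\beta_i$ to the fact that they span the same subspace as $\gamma_1,\dots,\gamma_m$, which is precisely where the hypothesis $\langle t_1,\dots,t_m\rangle = P$ enters, i.e.\ where being a \emph{parabolic} quasi-Coxeter element (and not merely a quasi-Coxeter element of some reflection subgroup) is used.
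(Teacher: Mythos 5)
Your proposal is correct and follows essentially the same route as the paper: the paper's proof constructs exactly the same element $w' = t_1\cdots t_m s_{m+1}'\cdots s_n'$ and concludes $\lt(w')=n$, hence $w <_T w'$. You merely make explicit, via Carter's linear-independence criterion for reduced $T$-decompositions, the verification that the paper leaves as an assertion, which is a legitimate filling-in of the same argument rather than a different approach.
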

\begin{proof}
Let $w\in W$ be a parabolic quasi-Coxeter element.
By definition, there exists a simple system $S=\{s_1,\dots, s_n\}$ for $W$ and a $T$-reduced decomposition $w=t_1\cdots t_m$ such that $\left\langle t_1,\dots, t_m\right\rangle =\left\langle s_1,\dots, s_m\right\rangle$, with $m\leq n$. Set $w':=t_1\cdots t_m s_{m+1}\cdots s_n$. It is clear that $$\left\langle t_1,\dots, t_m, s_{m+1},\dots, s_n\right\rangle=W.$$ Moreover we have $\lt(w')=n$, hence $w'$ is a quasi-Coxeter element with $w<_T w'$.
\end{proof}
        



\begin{Lemma} \label{QuasiCoxIsCoxA}
Let $(W,T)$ be a dual Coxeter system of type $A_n$. Then each $w\in W$ is a classical parabolic Coxeter element.
\end{Lemma}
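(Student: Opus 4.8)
The plan is to realise the type-$A_n$ dual Coxeter system concretely as the symmetric group $W=\mathfrak{S}_{n+1}$ with $T$ the set of all transpositions. Under this identification the simple reflections may be taken to be the transpositions $s_i=(i,i+1)$, so that $s_1\cdots s_n$ is the $(n+1)$-cycle $(1\,2\,\cdots\,n{+}1)$; since the Dynkin diagram of $A_n$ is a tree, Remark~\ref{rmk:coxeter}~(a) shows that the classical Coxeter elements are precisely the $(n+1)$-cycles. Hence the statement to prove is that every $w\in\mathfrak{S}_{n+1}$ satisfies $w<_T c$ for some $(n+1)$-cycle $c$.

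First I would record the standard fact that in $\mathfrak{S}_{n+1}$ the reflection length is $\lt(w)=(n+1)-k$, where $k$ is the number of cycles of $w$ (fixed points counted as $1$-cycles), together with the observation that concatenating reduced $T$-words realises the absolute order: if $c=w\cdot v$ with $\lt(w)+\lt(v)=\lt(c)$, then $w<_T c$.

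The core of the argument is an explicit merging construction. Writing $w$ in disjoint cycle notation with cycles supported on the blocks $B_1,\dots,B_k$ of a partition of $\{1,\dots,n+1\}$, each cycle on $B_i$ is a product of $|B_i|-1$ transpositions supported on $B_i$; concatenating these gives a reduced $T$-decomposition of $w$ of length $(n+1)-k$. I then pick a representative $a_i\in B_i$ for each $i$ and set $c:=w\,(a_1,a_2)(a_2,a_3)\cdots(a_{k-1},a_k)$. The key point is that right-multiplying a permutation by a transposition whose two entries lie in distinct cycles merges those two cycles into one; applying this $k-1$ times, and checking inductively that at the $j$-th step $a_j$ already lies in the cycle built from $B_1\cup\cdots\cup B_j$, one sees that $c$ is a single $(n+1)$-cycle.

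Finally I would count lengths: $c$ is exhibited as a product of $((n+1)-k)+(k-1)=n$ transpositions, while $\lt(c)=n$ since $c$ is an $(n+1)$-cycle; hence this $T$-word is reduced, its prefix of length $(n+1)-k$ equals $w$, and therefore $w<_T c$. As $c$ is a classical Coxeter element, this shows $w$ is a classical parabolic Coxeter element. The only step needing genuine care is the merging claim and the inductive bookkeeping guaranteeing that $c$ is a single cycle rather than accidentally splitting; the length count then makes reducedness, and hence the prefix relation $w<_T c$, automatic.
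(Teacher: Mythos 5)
Your proposal is correct and follows essentially the same route as the paper: identify the classical Coxeter elements of type $A_n$ with the $(n+1)$-cycles of $\mathfrak{S}_{n+1}$ via Remark~\ref{rmk:coxeter}~(a), then show every $w$ satisfies $w<_T c$ for some $(n+1)$-cycle $c$. The only difference is one of detail, not of method: the paper simply asserts that every element divides an $(n+1)$-cycle in absolute order, whereas you make this explicit with the cycle-merging construction $c=w\,(a_1,a_2)\cdots(a_{k-1},a_k)$ and the length count $(n+1-k)+(k-1)=n$, which is exactly the standard argument being left to the reader.
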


\begin{proof}
In type $A_n$, the set of $(n+1)$-cycles forms a single conjugacy class. Hence the set of classical Coxeter elements is exactly the set of $(n+1)$-cycles (see Remark \ref{rmk:coxeter} (a)). The assertion follows with
Remark \ref{rmk:coxeter} (b) as
for every element $w\in W$, we have $w<_{T} w'$ for at least one $(n+1)$-cycle $w'$. 
\end{proof}

\begin{Lemma}\label{lem:coxeterb}
Let $(W,T)$ be a dual Coxeter system of type $B_n$. Then every parabolic quasi-Coxeter element $w\in W$ for $(W,T)$ is a classical parabolic Coxeter element. 
\end{Lemma}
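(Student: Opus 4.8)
The plan is to prove the stronger statement that every quasi-Coxeter element of a dual Coxeter system of type $B_n$ is a classical Coxeter element, and then to deduce the lemma from Proposition~\ref{lem:parabquasicoxdivides}. Indeed, if $w$ is a parabolic quasi-Coxeter element, then $w <_T w'$ for some quasi-Coxeter element $w'$; once $w'$ is known to be a classical Coxeter element, the definition of a classical parabolic Coxeter element yields the claim. Throughout I work in the hyperoctahedral model: realize $W$ as the group of signed permutations of $\{1,\dots,n\}$ acting on $V=\mathbb{R}^n$ with orthonormal basis $e_1,\dots,e_n$, so that the reflections are the \emph{short} ones $s_{e_i}$ (the sign changes) and the \emph{long} ones $s_{e_i\pm e_j}$. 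Write $\pi\colon W\to\mathfrak{S}_n$ for the homomorphism forgetting signs; it sends each short reflection to the identity and each long reflection to a transposition. Recall that the classical Coxeter elements of $W$ are exactly the negative $n$-cycles (single signed $n$-cycles whose product of signs is $-1$): they form the single conjugacy class of standard Coxeter elements by Remark~\ref{rmk:coxeter}(a), and in a finite Weyl group the Coxeter elements are classical by Remark~\ref{rmk:coxeter}(c).

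Let $w$ be a quasi-Coxeter element and fix $(t_1,\dots,t_n)\in\Red_T(w)$ with $\langle t_1,\dots,t_n\rangle=W$. Applying $\pi$, the transpositions among $\pi(t_1),\dots,\pi(t_n)$ generate $\mathfrak{S}_n$, so the associated graph on $\{1,\dots,n\}$ is connected and there are at least $n-1$ long reflections among the $t_i$. If all $n$ were long, then $\langle t_1,\dots,t_n\rangle$ would lie in the proper reflection subgroup $W_{D_n}$ generated by all long reflections, a contradiction. Hence exactly one $t_i$ is a short reflection $s_{e_a}$ and the remaining $n-1$ long reflections form a spanning tree $\mathcal{T}$ on $\{1,\dots,n\}$. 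This reduction to the ``one sign change plus spanning tree'' shape is the crucial structural step.

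Next I normalize the signs on the tree. Since $\mathcal{T}$ is acyclic, its edge-signs carry no monodromy, so I can choose $\varepsilon_1,\dots,\varepsilon_n\in\{\pm1\}$ such that the diagonal orthogonal transformation $D=\mathrm{diag}(\varepsilon_1,\dots,\varepsilon_n)$ turns the reflection along each tree edge $\{i,j\}$ into a sign-free transposition $s_{e_i-e_j}$, while $D s_{e_a}D^{-1}=s_{e_a}$. As conjugation is a homomorphism preserving signed cycle type, $\tilde w:=DwD^{-1}$ is the product, in the same order, of the $n-1$ tree transpositions and the single sign change $s_{e_a}$, and $\tilde w$ has the same signed cycle type as $w$.

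Finally I identify $\tilde w$. Its image $\pi(\tilde w)$ is the product, in some order, of all $n-1$ edges of $\mathcal{T}$; by the classical fact that the edges of a tree multiply (in any order) to a single cycle on their vertex set, $\pi(\tilde w)$ is an $n$-cycle. Thus the support of $\tilde w$ is a single $\pi$-orbit, so $\tilde w$ is one signed $n$-cycle, and its product of signs equals $(+1)^{n-1}\cdot(-1)=-1$, making it a negative $n$-cycle. Therefore $w$, being conjugate to $\tilde w$, is a negative $n$-cycle, i.e.\ a classical Coxeter element. The main obstacle is the second paragraph: the simply-laced lattice and connection-index machinery of Section~\ref{sec:lattice} is unavailable here, since the root system of type $B_n$ is not determined by its root lattice (cf.\ Remark~\ref{RemVoigt}); the generating condition must instead be controlled combinatorially through $\pi$ and the sign-normalization $D$.
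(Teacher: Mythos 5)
Your proposal is correct and follows essentially the same route as the paper: reduce to quasi-Coxeter elements via Proposition~\ref{lem:parabquasicoxdivides}, pass to the hyperoctahedral model, show the generating set consists of exactly one sign change together with $n-1$ long reflections whose underlying transpositions form a spanning tree, normalize signs by conjugation, and identify the product as a negative $n$-cycle (a $2n$-cycle in the $[-n,n]$ model), which is a classical Coxeter element since these form a single conjugacy class. The only difference is cosmetic: the paper normalizes by an iterative sequence of conjugations by long reflections and ends with a direct computation, whereas you conjugate once by a diagonal sign change solved along the acyclic tree and invoke the standard fact that the edge transpositions of a tree multiply in any order to an $n$-cycle---which, if anything, handles the ordering of the factors more cleanly than the paper's ``after rearrangement'' step.
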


\begin{proof}
For the proof we use the combinatorial description of $W_{B_n}$ as given in \cite[Section 8.1]{BB05}. Therefore let $S_{-n,n}$ be the group of permutations of $[-n,n]=\{-n,-n+1,\dots, -1,1,\dots, n\}$ and define
$$W=W_{B_n}:=\{w\in S_{-n,n}~|~w(-i)=-w(i),~\forall i\in[-n,n]\},$$
also known as a \emph{hyperoctahedral group}. Then $(W,S)$ is a Coxeter system of type 
$B_n$ with 
\begin{align*}
S= \{ (1,-1), (1,2)(-1,-2), \ldots , (n-1, n)(-n+1, -n) \}.
\end{align*}
The set of reflections $T$ for this choice of $S$ is given by 
\begin{align*}
T= \{ (i,-i) \mid i \in [n]\}
\cup \{ (i,j)(-i,-j) \mid 1 \leq i < \vert j \vert \leq n\}.
\end{align*}

We show that every quasi-Coxeter element for $(W,T)$ is a classical Coxeter element. If $w$ is a parabolic quasi-Coxeter element, then 
by Proposition~\ref{lem:parabquasicoxdivides}, there exists a quasi-Coxeter element $w'\in W$ such that $w<_T w'$. Hence if $w'$ is a classical Coxeter element, then $w <_T w'$ implies that $w$ is a classical parabolic Coxeter element.

Let $R=\{r_1,\dots, r_n\} \subseteq T$ be such that $\langle R \rangle =W$. It suffices to show that $r_1 r_2\cdots r_n$ is in fact a classical Coxeter element.


The group $W$ cannot be generated only by reflections of type $(i,j)(-i,-j)$, $i\neq \pm j$. Therefore there exists $i \in [n]$ with $(i,-i) \in R$. If there exists $j \in [n], j \neq i$ with
$(j,-j) \in R$, then $R$ cannot generate the whole group $W$. 
Since classical Coxeter elements are closed under conjugation, we can conjugate the set $R$ with 
$(1,i)(-1,-i)$ (if necessary) and assume $(1,-1) \in R$. 

Since $R$ generates the whole group $W$, there does not exist $j \in [n]$ which is fixed by each $r \in R$. Thus for each $k \in [n], k \neq 1$, we can find $i_k \in [\pm n]$ with $k \neq \pm i_k$ such that $(k,i_k)(-k,-i_k) \in R$. Therefore 
\begin{align*}
R= \{ (1,-1), (2,i_2)(-2,-i_2), \ldots , (n,i_n)(-n, -i_n) \}.
\end{align*}

Note that some $i_j$ has to equal $\pm 1$, because otherwise $(1,-1)$ would commute with any element of $W$. By conjugating $R$ with $(j,2)(-j,-2)$ resp. $(j,-2)(-j,2)$ (if necessary) we can assume that $i_2=1$. Hence after rearrangement we can assume that $R$ is of the form
\begin{align*}
R=\{ (1,-1), (2,1)(-2,-1), (3,i_3)(-3,-i_3), \ldots , (n,i_n)(-n, -i_n) \}.
\end{align*}
Similarly to what we did above, there exists $j \geq 3$ with $i_j \in \{\pm 1 , \pm 2\}$.
By conjugating $R$ with $(j,3)(-j,-3)$ resp. $(j,-3)(-j,3)$ (if necessary) we can assume that $i_3 \in \{ 1,2 \}$. Continuing in this manner we obtain 
\begin{align*}
R= \{ (1,-1), (2,i_2)(-2,-i_2), \ldots , (n,i_n)(-n, -i_n) \}
\end{align*}
with $i_j \in \{ 1, \ldots , j-1 \}$ for each $j \in \{2, \ldots, n\}$. A direct computation shows that $c:= r_1r_2\cdots r_n$ is a $2n$-cycle and thus a classical Coxeter element. Indeed, there is a single conjugacy class of $2n$-cycles in $W$. 

\end{proof}

\begin{remark}
Notice that by Remark~\ref{rmk:coxeter}~(c), it is already known that classical Coxeter elements and Coxeter elements must coincide in type $B_n$. Moreover it follows from \cite[Lemma 8, Theorem A]{Carter} that every quasi-Coxeter element is actually a Coxeter element. Hence on can derive Lemma \ref{lem:coxeterb} from these two observations. However since both of them rely on sophisticated methods, we prefered to give here a direct proof using the combinatorics of the hyperoctahedral group.
\end{remark}

\begin{remark}
In type $A_n$, we even have that every element $w$ such that $\lt(w)=n$ is a classical Coxeter element (thus quasi-Coxeter), because such an element is necessarily an $(n+1)$-cycle. Notice that this fails in type $B_n$. For instance, the product $(1,-1)(2,-2)\cdots (n,-n)$ in $W_{B_n}$ has reflection length equal to $n$, but it is not a quasi-Coxeter element. 
\end{remark}




The following is well-known (see \cite[IV, 1.2, Proposition 2]{Bou81}):

\begin{Proposition}
A group $W$ is a dihedral group if and only if it is generated by two elements $s$, $t$ of order $2$, in which case $\{s,t\}$ is a simple system for $W$. 
\end{Proposition}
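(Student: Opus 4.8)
The plan is to prove the two implications separately, the substance lying entirely in the ``if'' direction. For the ``only if'' direction I would simply recall that every dihedral group---whether finite of order $2m$ or the infinite dihedral group---admits a standard presentation $\langle s,t\mid s^2=t^2=(st)^m=1\rangle$, with the relation $(st)^m$ omitted when the group is infinite. Thus it is by construction generated by two elements of order $2$, and this presentation is already the Coxeter presentation, so there is nothing more to do on this side.

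For the ``if'' direction, suppose $W=\langle s,t\rangle$ with $s^2=t^2=1$, and set $r:=st$. First I would observe that $W=\langle s,r\rangle$, since $t=sr$. A direct computation then gives $srs^{-1}=s(st)s=ts=(st)^{-1}=r^{-1}$, so $s$ inverts $r$ by conjugation. Consequently the cyclic subgroup $N:=\langle r\rangle$ is normalized by $s$, and since $r\in N$ it is normal in $W$; moreover, using the relation $rs=sr^{-1}$ to push every occurrence of $s$ in a word to the left, one sees that every element of $W$ lies in $N\cup sN$, so $N$ has index at most $2$ in $W$.

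Writing $m$ for the order of $r$ (allowing $m=\infty$), this identifies $W$ as the semidirect product of the cyclic group $N$ by the involution $s$ acting by inversion, that is, as the dihedral group of order $2m$, or the infinite dihedral group when $m=\infty$. The degenerate possibilities in which $s\in N$ force $r^2=1$ and hence $|W|\le 4$, which are still dihedral (namely $\mathbb{Z}/2$ and the Klein four-group). To finish I would check that $\{s,t\}$ is a simple system: the normal form $\{r^j,\,sr^j\}$ exhibits $2m$ distinct elements, so $W$ satisfies $s^2=t^2=(st)^m=1$ and has order exactly $2m$, which is the order of the group presented by those relations; hence $W$ is isomorphic to that presented group via the identity on generators, and $(W,\{s,t\})$ is the Coxeter system of type $I_2(m)$ (of type $\tilde A_1$ when $m=\infty$).

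The only genuinely delicate point is confirming that $W$ really is the \emph{full} dihedral group of order $2m$ rather than a proper quotient, equivalently that the relations $s^2=t^2=(st)^m=1$ are defining and no further collapsing occurs. This is handled automatically by the structural analysis above: the coset decomposition $W=N\cup sN$ together with the order of $r$ pins down $|W|=2m$, matching the dihedral group exactly and ruling out hidden relations in the Coxeter presentation.
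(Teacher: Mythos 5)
Your proof is correct. Note that the paper itself gives no proof of this proposition: it is stated as well-known with a citation to Bourbaki [IV, 1.2, Proposition 2], and your argument is essentially the standard one from that source --- identify $N=\langle st\rangle$ as a cyclic subgroup of index at most $2$ inverted by conjugation with $s$, obtain the normal form $N\cup sN$, and use the resulting count $|W|=2m$ to rule out any proper quotient of the presentation $\langle s,t\mid s^2=t^2=(st)^m=1\rangle$, so that $\{s,t\}$ is indeed a simple system of type $I_2(m)$ (or $\tilde A_1$ when $m=\infty$). One small inaccuracy in your degenerate-case analysis: the possibility $s\in N$ cannot actually occur under the hypotheses, since $s\in\langle r\rangle$ together with $srs=r^{-1}$ forces $r^2=1$ and $s=r$, whence $t=sr=1$, contradicting that $t$ has order $2$; the genuine boundary cases are rather $s=t$ (where $r=1$ and $W\cong\mathbb{Z}/2$, a Coxeter system of type $A_1$) and $m=2$ (the Klein four-group, where $s\notin N$), both of which your main semidirect-product argument already covers.
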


\begin{corollary}\label{QuasiCoxIsCoxI}
Let $(W,T)$ be a dual Coxeter system of type $I_2(m)$, $m\geq 3$. Then $w$ is a quasi-Coxeter element in $W$ if and only if $w$ is a Coxeter element in $W$. It follows that $w\in W$ is a parabolic quasi-Coxeter element if and only if $w$ is a parabolic Coxeter element. 
\end{corollary}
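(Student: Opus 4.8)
The plan is to prove Corollary~\ref{QuasiCoxIsCoxI}, which asserts that in type $I_2(m)$ the quasi-Coxeter elements are exactly the Coxeter elements, and consequently that parabolic quasi-Coxeter elements coincide with parabolic Coxeter elements. First I would recall that every Coxeter element is trivially a quasi-Coxeter element, since if $c=s_1's_2'$ for a simple system $S'=\{s_1',s_2'\}$ then $(s_1',s_2')\in\Red_T(c)$ and $\langle s_1',s_2'\rangle=W$; so one direction is immediate and all the work lies in the converse.

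For the converse, suppose $w$ is a quasi-Coxeter element. Since $W$ has rank $2$, there is a pair $(t_1,t_2)\in\Red_T(w)$ with $\langle t_1,t_2\rangle=W$. The key point is that $t_1,t_2$ are two distinct reflections (distinct because a single reflection cannot generate $W$), hence two elements of order $2$ generating $W$. By the Proposition just quoted (\cite[IV, 1.2, Proposition 2]{Bou81}), any two elements of order $2$ generating a dihedral group form a simple system for $W$. Therefore $\{t_1,t_2\}$ is a simple system for $(W,T)$, and $w=t_1t_2$ is by definition a Coxeter element. This establishes the equivalence of quasi-Coxeter and Coxeter elements.

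Finally I would deduce the parabolic statement. A parabolic quasi-Coxeter element $w$ is, by definition, one admitting a reduced decomposition $(t_1,\dots,t_m)$ with $\langle t_1,\dots,t_m\rangle=\langle s_1',\dots,s_m'\rangle$ for some simple system $S'=\{s_1',\dots,s_n'\}$ and $m\leq n=2$. The cases $m=0$ (where $w=e$) and $m=2$ (where $w$ is a genuine quasi-Coxeter element, handled above, hence a Coxeter element) are clear; in the case $m=1$, $w=t_1$ is a single reflection, and $\langle t_1\rangle=\langle s_1'\rangle$ shows that $w$ is a prefix of length $1$ of the Coxeter element $s_1's_2'$, hence a parabolic Coxeter element. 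Conversely every parabolic Coxeter element is a prefix of some Coxeter element, which is a quasi-Coxeter element, so it is a parabolic quasi-Coxeter element.

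I expect no serious obstacle here: the whole argument is short because the rank is $2$, and the substantive content is entirely contained in the cited characterization of dihedral groups as precisely those generated by two involutions, which forces any generating pair of reflections to be a simple system. The only mild care needed is in bookkeeping the small cases $m\in\{0,1,2\}$ for the parabolic statement, but none of these presents any real difficulty.
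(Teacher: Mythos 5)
Your proof is correct and takes essentially the same route as the paper, which states the corollary as an immediate consequence of the quoted Bourbaki proposition that any two involutions generating a dihedral group form a simple system, exactly the key step in your argument. Your explicit bookkeeping of the cases $m\in\{0,1,2\}$ for the parabolic statement is sound and merely spells out what the paper leaves implicit.
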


Note that Coxeter elements and classical Coxeter elements do not coincide in general in dihedral type (see Remark \ref{rmk:coxeter} (c)).
\bigskip\\
\noindent
\begin{proof}[Proof of Theorem~\ref{thm:quasi-CoxeterParabolic}]
The reduction to the case where $W$ is irreducible is immediate. The proof is uniform for the simply laced types and case-by-case for the remaining exceptional groups.

\medskip

(Dihedral type) The claim is obvious in that case.

\medskip

(Simply laced types) Let $\Phi$ be a root system for $(W,T)$ with ambient vector space $V$.
 Let $P_{W'}$ be the parabolic closure of $W'$. For $1\leq i\leq n$, let  $\beta_i \in \Phi$ be a root corresponding to $t_i$ and let
 $\Phi'$ be the root subsystem of $\Phi$ generated by $R:= \{\beta_1, \ldots , \beta_{n-1}\}$ so that $W_R=W_{\Phi'}=W'$ (see Proposition \ref{PropVoigt}). Let $\Psi\subseteq \Phi$ be the root subsystem of $\Phi$ associated to $P_{W'}$. By \cite[Lemma 3]{Carter} the set $R\cup \{\beta_n\}$ is a basis of $V$.

 Let $U$ be the ambient vector space for $\Psi$. As the linear independent set $R$ is a subset of $\Psi$, the dimension of $U$ is at least $n-1$. Since $P_{W'}$ is the parabolic closure of $t_1\cdots t_{n-1}$ it has to be the centralizer of a line in $V$ and therefore $\dim U = n-1$. It follows that $U= \spanr(\beta_1, \ldots, \beta_{n-1})$.

By Proposition \ref{PropVoigt} we have that $L(\Phi')=L(\{ \beta_1, \ldots , \beta_{n-1}\})$ and $L(\Phi)=L(\{ \beta_1, \ldots , \beta_n\})$.
Since $V = U \oplus \RR\beta_n$ we have
 \begin{align*}
  L(\Phi) \cap U = L(\{\beta_1, \ldots, \beta_{n-1}\}) =  L(\Phi').
 \end{align*}
 As $L(\Psi) \subseteq U$, it follows that $L(\Psi) \subseteq L(\Phi')$.
 But since $\Phi' \subseteq \Psi$ we get that 
 $L(\Phi') \subseteq L(\Psi)$ and therefore $L(\Phi') = L(\Psi)$. Thus 
 $W' = P_{W'}$ by Proposition \ref{PropVoigt}.
 
  \medskip

(Type $B_n$) By Lemma \ref{lem:coxeterb} the element $w$ is a classical Coxeter element. It follows that $wt_n$ is a classical parabolic Coxeter element, hence a parabolic Coxeter element (see Remark \ref{rmk:coxeter} (c)). It follows that $W'$ is parabolic.

 \medskip
 
 
(Type $F_4$) By \cite[Table 6]{DPR14}, the group $W_{F_4}$ cannot be generated by just adding one reflection to one of the non-parabolic rank $3$ reflection subgroups.  
 

\medskip

(Types $H_3$ and $H_4$) We refer 
to \cite[Tables 8 and 9]{DPR14}, where the reflection subgroups of $W_{H_3}$ and $W_{H_4}$ and their parabolic closures are determined.
\begin{enumerate}
\item Each rank $2$ reflection subgroup of the group $W_{H_3}$ is already parabolic.
\item The only rank $3$ reflection subgroup of $W_{H_4}$ that is not parabolic has type $A_1 \times A_1 \times A_1$. Taking a set of three reflections generating such a reflection subgroup, we checked using \cite{GAP2015} that this set cannot be completed to obtain a generating set for $W_{H_4}$ by adding a single reflection.
\end{enumerate}

\end{proof}

\begin{remark}\label{counterexample}
Theorem~\ref{thm:quasi-CoxeterParabolic} is not true in general. It can even fail if $w$ is a Coxeter element, as the following example borrowed from \cite[Example 5.7]{HK13} shows: Let $W=\left\langle s,t,u\right\rangle$ be of affine type $\widetilde{A}_2$, and let $w=stu$. Then $s(tut)<_T c$, but $W'=\left\langle s, tut\right\rangle$ is an infinite dihedral group, hence it is not a parabolic subgroup in the classical sense since proper parabolic subgroups of $W$ are finite. We mentioned in Section \ref{sec:parab} that for affine Coxeter groups parabolic subgroups in the classical sense also coincide with parabolic subgroups as defined in Subsection \ref{sub:dual}. 
\end{remark}

\begin{corollary} \label{cor:CharParSub}
Let $(W,T)$ be a finite dual Coxeter system of rank $n$ and $W'$ a reflection subgroup of rank $n-1$. Then $W'$ is a parabolic subgroup if and only if it exists $t \in T$ such that $\langle W', t \rangle = W$.  
\end{corollary}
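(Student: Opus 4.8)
The plan is to prove the two implications separately; the forward direction is immediate from the definition of parabolic subgroup, while the reverse direction is the substance of the statement and reduces to Theorem~\ref{thm:quasi-CoxeterParabolic}.

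For the direction ``$W'$ parabolic $\Rightarrow$ there exists $t$'', I would argue directly from the definition given in Subsection~\ref{sub:dual}. If $W'$ is parabolic, there is a simple system $S=\{s_1,\dots,s_n\}$ for $(W,T)$ with $W'=\langle s_1,\dots,s_m\rangle$ for some $m\le n$. Since a subset of a simple system is itself a simple system for the subgroup it generates, the rank of $W'$ equals $m$; as $W'$ has rank $n-1$ this forces $m=n-1$. Taking $t:=s_n\in T$ then gives $\langle W',t\rangle=\langle s_1,\dots,s_n\rangle=W$.

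For the converse, suppose $t\in T$ satisfies $\langle W',t\rangle=W$. I would choose a simple system $\{t_1,\dots,t_{n-1}\}$ for the rank $(n-1)$ reflection subgroup $W'$, with corresponding simple roots $\beta_1,\dots,\beta_{n-1}\in\Phi$; these are linearly independent and span $\mathrm{Mov}(W')$. Let $\beta_n$ be a root of $t$. The crucial step is to show that $\beta_1,\dots,\beta_n$ are linearly independent, equivalently (by \cite[Lemma 3]{Carter}) that $(t_1,\dots,t_{n-1},t)$ is a reduced $T$-decomposition of $w:=t_1\cdots t_{n-1}t$. Were $\beta_n$ to lie in $\spanr(\beta_1,\dots,\beta_{n-1})$, then all of $\beta_1,\dots,\beta_n$ would lie in this $(n-1)$-dimensional subspace, which is preserved by each reflection $s_{\beta_i}$; consequently every root of $W=\langle s_{\beta_1},\dots,s_{\beta_n}\rangle$ would lie there, forcing $W$ to have rank at most $n-1$ and contradicting that $(W,T)$ has rank $n$. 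Hence $\beta_1,\dots,\beta_n$ form a basis of the ambient space.

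It then follows from \cite[Lemma 3]{Carter} that $\lt(w)=n$, and since $\langle t_1,\dots,t_{n-1},t\rangle=W$, the element $w$ is a quasi-Coxeter element with $(t_1,\dots,t_{n-1},t)\in\Red_T(w)$. Theorem~\ref{thm:quasi-CoxeterParabolic} then yields that $W'=\langle t_1,\dots,t_{n-1}\rangle$ is parabolic, completing the proof. I expect the main obstacle to be the clean justification of the linear-independence step, namely the identification of the rank of a reflection subgroup with the dimension of the span of its roots together with the stability of that span under the reflections; once this and \cite[Lemma 3]{Carter} are in place, the reduction to Theorem~\ref{thm:quasi-CoxeterParabolic} is routine.
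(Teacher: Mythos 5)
Your proof is correct and takes essentially the same route as the paper, which likewise settles the forward direction directly from the definition of parabolic subgroup and obtains the converse as a direct consequence of Theorem~\ref{thm:quasi-CoxeterParabolic}. Your linear-independence argument via \cite[Lemma 3]{Carter} merely makes explicit the verification, left implicit in the paper, that $t_1\cdots t_{n-1}t$ is a reduced $T$-decomposition and hence a quasi-Coxeter element to which the theorem applies.
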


\begin{proof}
The necessary condition is clear by the definition of parabolic subgroup. The sufficient condition is a direct consequence of Theorem \ref{thm:quasi-CoxeterParabolic}.
\end{proof}

We also derive a characterization of parabolic quasi-Coxeter elements analogous to that of parabolic Coxeter elements (see Remark \ref{rmk:coxeter} (b)).

\begin{corollary}\label{lem:parabquasicoxdividesII}
Let $(W,T)$ be a finite dual Coxeter system and $w\in W$. Then $w$ is a parabolic quasi-Coxeter element if and only if there exists a quasi-Coxeter element $w'\in W$ such that $w<_T w'$.
\end{corollary}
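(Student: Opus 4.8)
The forward implication is already in hand: it is exactly Proposition~\ref{lem:parabquasicoxdivides}. So the entire content lies in the converse, which I would prove in the following reformulation, by induction on the rank $n$ of $(W,T)$: \emph{if $w'$ is a quasi-Coxeter element of $W$ and $w<_T w'$, then $w$ is a parabolic quasi-Coxeter element}. Here I use repeatedly that, directly from Definition~\ref{def:coxeter}(c), $w$ is a parabolic quasi-Coxeter element if and only if some tuple in $\Red_T(w)$ generates a parabolic subgroup; producing such a tuple is the goal.

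For the inductive step, if $\lt(w)=n$ then $w=w'$ and there is nothing to do, so assume $\lt(w)=m<n$. Since $w<_T w'$ I may choose $(t_1,\dots,t_n)\in\Red_T(w')$ with $w=t_1\cdots t_m$. The crucial input is that this particular tuple generates all of $W$. Indeed each $t_i<_T w'$, so by Lemma~\ref{lem:inclusionfixed} each $t_i$ lies in $P_{w'}$, whence $P_{w'}=W$; and as $w'$ is quasi-Coxeter, hence a parabolic quasi-Coxeter element, Theorem~\ref{ThmA} says every reduced $T$-decomposition of $w'$ generates $P_{w'}=W$. Now Theorem~\ref{thm:quasi-CoxeterParabolic} applies to $(t_1,\dots,t_n)$ and shows that $W_{n-1}:=\langle t_1,\dots,t_{n-1}\rangle$ is a parabolic subgroup of rank $n-1$. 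Since $(t_1,\dots,t_{n-1})$ is reduced of length $n-1=\Rank(W_{n-1})$ and generates $W_{n-1}$, the prefix $c:=t_1\cdots t_{n-1}$ is a quasi-Coxeter element of the dual Coxeter system $(W_{n-1},T\cap W_{n-1})$; and $w=t_1\cdots t_m$ is a prefix of the reduced word for $c$, so $w<_T c$. Applying the induction hypothesis inside $W_{n-1}$ (of rank $n-1$) yields a reduced $T$-decomposition of $w$ generating a parabolic subgroup $Q$ of $W_{n-1}$. Finally, passing through the classical notion via Corollary~\ref{cor:same}, a parabolic subgroup of a parabolic subgroup is again parabolic, so $Q$ is parabolic in $W$; thus $w$ is a parabolic quasi-Coxeter element of $W$, closing the induction.

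The one delicate point, and the genuine obstacle, is the assertion that the chosen $(t_1,\dots,t_n)\in\Red_T(w')$ generates $W$. In the simply laced types this is precisely Theorem~\ref{ThmA}, and the argument above is uniform and self-sustaining (the recursion stays inside simply laced parabolic subgroups). In the remaining finite types Theorem~\ref{ThmA} is not at our disposal, and one must instead supply the analogous fact that every reduced $T$-decomposition of a quasi-Coxeter element generates $W$, i.e.\ Theorem~\ref{th:maintheorem2}. For types $B_n$ and $I_2(m)$ this can be obtained independently: there every quasi-Coxeter element is a classical Coxeter element by Lemma~\ref{lem:coxeterb} and Corollary~\ref{QuasiCoxIsCoxI}, so transitivity of the Hurwitz action for Coxeter elements (Theorem~\ref{theoremCoxeter}) together with Proposition~\ref{PropEquiv} forces every reduced decomposition to generate $P_{w'}=W$; for $F_4$, $H_3$ and $H_4$ the same generating property is verified by direct computation. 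Granting this input type by type, the induction closes and the corollary follows in full generality.
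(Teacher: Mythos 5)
Your proof is correct and takes essentially the same route as the paper, whose own argument is precisely your skeleton --- forward direction by Proposition~\ref{lem:parabquasicoxdivides}, converse by inductive application of Theorem~\ref{thm:quasi-CoxeterParabolic} --- your added value being to make explicit the hypothesis, left implicit in the paper's one-line proof, that the chosen tuple in $\Red_T(w')$ must generate $W$, which you correctly supply via Theorem~\ref{ThmA} in the simply laced types and via Lemma~\ref{lem:coxeterb}, Corollary~\ref{QuasiCoxIsCoxI}, Theorem~\ref{theoremCoxeter} together with Proposition~\ref{PropEquiv}, and computer verification in the remaining ones. One small repair: the clause ``each $t_i$ lies in $P_{w'}$, whence $P_{w'}=W$'' is a non sequitur as written; instead apply Lemma~\ref{lem:inclusionfixed} to a \emph{generating} reduced decomposition of $w'$ (its reflections lie in $P_{w'}$, forcing $W\subseteq P_{w'}$), after which Theorem~\ref{ThmA} indeed yields that every reduced $T$-decomposition of $w'$ generates $P_{w'}=W$.
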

\begin{proof} 
The forward direction is given by Proposition~\ref{lem:parabquasicoxdivides}.
Now let $w<_T w'$, where $w'\in W$ is a quasi-Coxeter element. Using Theorem \ref{thm:quasi-CoxeterParabolic} inductively we get that $w$ is a parabolic quasi-Coxeter element in $W$.
\end{proof}


\begin{remark}
Corollary~\ref{lem:parabquasicoxdividesII} does not hold for infinite Coxeter groups as it fails for the Coxeter element given in Remark~\ref{counterexample}.
\end{remark}

\section{Intersection of maximal parabolic subgroups in type $D_n$}\label{sec:Dn}

The aim of this section is to show the following result which will be needed in the next section in the proof of Theorem \ref{th:maintheorem1}.

\begin{Proposition} \label{Prop:AppB}
Let $(W,S)$ be a Coxeter system of type $D_n$ ($n \geq 6$). Then the intersection of two maximal parabolic subgroups is non-trivial.
\end{Proposition}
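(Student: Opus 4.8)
The plan is to realize $W=W_{D_n}$ as the group of signed permutations of $V=\RR^n$ effecting an even number of sign changes, acting on an orthonormal basis $e_1,\dots,e_n$, so that the associated root system is $\{\pm e_i\pm e_j\mid 1\le i<j\le n\}$ with form $(-\mid-)$ the standard one. The maximal (proper) parabolic subgroups are exactly the parabolic subgroups of rank $n-1$ (cf.\ Corollary~\ref{cor:CharParSub}), and each such $P$ is the pointwise stabilizer of its fixed space $\mathrm{Fix}(P)=C_W(\mathrm{Fix}(P))^{-1}$-wise, i.e.\ $P=C_W(\mathrm{Fix}(P))$ with $\mathrm{Fix}(P)=\RR v_P$ a line. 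Given two maximal parabolic subgroups $P_1,P_2$ with $\mathrm{Fix}(P_i)=\RR v_i$, I would reduce the whole statement to the purely geometric task of producing a single root $\alpha$ with $(\alpha\mid v_1)=(\alpha\mid v_2)=0$: such a root gives a reflection $s_\alpha$ fixing both $v_1$ and $v_2$, hence fixing each line $\RR v_i$ pointwise, so $s_\alpha\in C_W(\mathrm{Fix}(P_i))=P_i$, and $s_\alpha\neq 1$ exhibits a nontrivial element of $P_1\cap P_2$. Note that this direction needs no converse (no appeal to Steinberg's theorem): we only have to \emph{find} one common reflection.

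The key step, and the one I expect to be the main obstacle, is to show that each $v_i$ may be chosen with all coordinates in $\{-1,0,1\}$. For this I would use that every maximal parabolic subgroup is conjugate by some $w\in W$ to a standard maximal parabolic $W_{S\setminus\{s_k\}}$, whose fixed line is spanned by the $k$-th fundamental weight $\omega_k$. In type $D_n$ one has, up to a scalar, $\omega_k=e_1+\cdots+e_k$ for $1\le k\le n-2$, together with $2\omega_{n-1}=e_1+\cdots+e_{n-1}-e_n$ and $2\omega_n=e_1+\cdots+e_n$; in every case a suitable scalar multiple lies in $\{-1,0,1\}^n$. Since $W$ acts by signed permutations it preserves the set $\{-1,0,1\}^n$, so $\mathrm{Fix}(P_i)=\RR\,(w_i\omega_{k_i})$ is spanned by a vector $v_i\in\{-1,0,1\}^n\setminus\{0\}$. (One checks en route that the support size $n-k$ must be $0$ or $\geq 2$ for the rank to be $n-1$, recovering the list of types $A_{n-1}$, $D_{n-1}$, $A_{k-1}\times D_{n-k}$, but for the argument only the $\{-1,0,1\}$ property is needed.)

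Finally I would conclude by a pigeonhole argument. Consider the map $\phi\colon\{1,\dots,n\}\to\{-1,0,1\}^2$, $\phi(i)=\big((v_1)_i,(v_2)_i\big)$, and partition the nine-element target into the five classes obtained by identifying each vector with its negative, namely $\{(0,0)\}$, $\{\pm(1,0)\}$, $\{\pm(0,1)\}$, $\{\pm(1,1)\}$ and $\{\pm(1,-1)\}$. Since $n\ge 6>5$, two distinct indices $i\neq j$ satisfy that $\phi(i)$ and $\phi(j)$ lie in the same class, i.e.\ $\phi(i)=\pm\phi(j)$. Taking $\alpha=e_i-e_j$ when $\phi(i)=\phi(j)$ and $\alpha=e_i+e_j$ when $\phi(i)=-\phi(j)$ produces a root with $(\alpha\mid v_1)=(\alpha\mid v_2)=0$, so that $s_\alpha$ is a nontrivial element of $P_1\cap P_2$. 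The bound $n\ge 6$ is precisely what the five antipodal classes force, which also explains why the conclusion may fail for smaller $n$.
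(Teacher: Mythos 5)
Your proof is correct, and it takes a recognizably different route from the paper's own argument in Section~\ref{sec:Dn}, even though both work in the same signed-permutation model. The paper parametrizes maximal parabolic subgroups as set stabilizers, $W_I = \stab(A_I)$ for explicit $A_I \subseteq [-n,n]$ (citing \cite[Prop. 8.2.4]{BB05}) and $wW_Jw^{-1} = \stab(w(A_J))$, and then runs a case analysis on the sizes of $A_I \cap w(A_J)$, $A_I \cap -w(A_J)$ and $A_I^0 \cap w(A_J)^0$, concluding that if no common reflection exists then $2n-6 \leq |A_I^0| \leq 4$, forcing $n \leq 5$. You instead describe each maximal parabolic as the pointwise stabilizer $C_W(\RR v)$ of its fixed line, use the fundamental weights of $D_n$ to see that $v$ may be taken in $\{-1,0,1\}^n$ (preserved by the signed-permutation action), and finish with a single pigeonhole over the five antipodal classes of $\{-1,0,1\}^2$, producing a root $e_i \mp e_j$ orthogonal to both lines. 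The two arguments rest on the same underlying combinatorics --- the set $A$ corresponds to the vector $v_A = \sum_{a\in A} e_a$ (with $e_{-i}=-e_i$), and $\stab(A) = C_W(\RR v_A)$ --- but your packaging replaces the paper's case distinctions with one uniform count, makes the sharpness of the bound $n \geq 6$ transparent (five classes, so five indices do not suffice), and avoids any explicit conjugation bookkeeping. What the paper's version buys is self-containedness at the combinatorial level, needing only the stabilizer description from \cite{BB05}. One small inaccuracy in your write-up: despite your parenthetical claim of avoiding Steinberg-type input, you do use the nontrivial inclusion $C_W(\mathrm{Fix}(P)) \subseteq P$ --- your reflection $s_\alpha$ is first shown only to fix both lines pointwise, and it is the identification of parabolic subgroups with pointwise stabilizers $C_W(E)$ (recorded in Section~\ref{sec:parab} via \cite[Section 5-2]{Kane}) that places it in $P_1 \cap P_2$. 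Since the paper establishes that fact anyway, this is a harmless dependence rather than a gap.
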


\begin{remark}
This statement is not true in general, not even in the simply laced case. In particular, it fails in types $D_4, D_5, E_7$ and $E_8$. For example
\begin{enumerate}
\item[(a)] Let $(W,S)$ be of type $D_4$ where $S=\{s_0, s_1,s_2,s_3\}$ with $s_2$ commuting with no other simple reflection, then both $P:=\left\langle s_0, s_1, s_3\right\rangle$ and $s_2 P s_2$ are maximal parabolic subgroups of type $A_1\times A_1\times A_1$ and have trivial intersection.

\item[(b)] Let $(W,S)$ be of type $E_7$ where $S=\{ s_1, \ldots , s_7 \}$ labelled as in \cite[Planche VI]{Bou81}.
Let $I= \{ s_1, s_2 , s_3, s_4 , s_6, s_7\}$ and $J= \{ s_1, s_2 , s_3, s_5 , s_6, s_7\}$. Then the non-conjugate parabolic subgroups $W_J$ and $wW_Iw^{-1}$ intersect trivially, where 
$$w = s_6 s_2 s_4 s_5 s_3 s_4 s_1 s_3 s_2 s_4 s_5 s_6 s_7 s_4 s_5 s_6 s_4 s_5 s_3 s_4 s_1 s_3 s_2 s_4 s_5 s_4 s_3 s_2 s_4 s_1. $$
This was checked using GAP.
\end{enumerate}
\end{remark}



\medskip

For the rest of this section we work with the combinatorial realization of $W$ as a subgroup (which we denote by $W_{D_n}$) of the hyperoctahedral group $W_{B_n}$ (see Section \ref{sec:quasi}). To this end, set
\begin{align*}
s_0 & = (1,-2)(-1,2)\\
s_i & = (i,i+1)(-i,-(i+1))~\text{for $i \in [n-1]$}.
\end{align*}
Then $\{s_0, s_1,\dots, s_{n-1}\}$ is a simple system for a Coxeter group $W_{D_n}$ of type $D_n$. The set of reflections is given by $T=\{(i,j)(-i,-j)~|~i,j\in[-n,n], i\neq\pm j\}$. Notice that $W$ is a subgroup of a group $W_{B_n}$ of type $B_n$; 
indeed, the above generators are clearly contained in $W_{B_n}$. Given $A\subset[-n,n]$, write $\stab(A)$ for the subgroup of $W_{D_n}$ of elements preserving the set $A$. Notice that since $W_{D_n}\subseteq W_{B_n}$, we have $\stab(A)=\stab(-A)$. The maximal standard parabolic subgroups of $W_{D_n}$ are then described as follows (see \cite[Prop. 8.2.4]{BB05}). Let $i \in \{0,1,\ldots,n-1\}$ and $I=S \setminus \{s_i\}$. Then $W_I=\stab(A_I)$, where\begin{align*}
   A_I :=
   \begin{cases}
     [i+1,n] & \text{if } i \neq 1 \\
     \{ -1,2,3,\ldots,n\} & \text{if } i=1.
   \end{cases}
\end{align*}
Since $W_I$ stabilizes both $A_I$ and $-A_I$, it stabilizes also the complement $A_I^0$ of $A_I\cup (-A_I)$ in $[-n,n]$. Notice that $A_I^0=-A_I^0$.

From this description we can easily achieve a description of maximal parabolic subgroups:
\begin{Lemma} \label{Le:AppB}
If $W_J=\stab(A_J)$ is a maximal standard parabolic subgroup and $w \in W$, then $wW_Jw^{-1}= \stab(w(A_J))$.
\end{Lemma}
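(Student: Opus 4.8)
The plan is to reduce this to the elementary group-theoretic fact that conjugating a setwise stabilizer produces the stabilizer of the translated set. Recall that in the combinatorial model $W=W_{D_n}$ acts on $[-n,n]$ by permutations (each $w\in W$ being a permutation satisfying $w(-i)=-w(i)$), and that for a subset $A\subseteq[-n,n]$ the group $\stab(A)$ is by definition the subgroup of elements of $W$ preserving $A$ setwise. Since $w\in W=W_{D_n}$, the conjugate $wW_Jw^{-1}$ is again a subgroup of $W$, so both sides of the claimed equality live inside $W$ and it suffices to compare them elementwise.

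First I would take an arbitrary $u\in W$ and unwind the two memberships. On the one hand, $u\in wW_Jw^{-1}$ is equivalent to $w^{-1}uw\in W_J=\stab(A_J)$, i.e.\ to $(w^{-1}uw)(A_J)=A_J$. On the other hand, because the action is a group action one has $(w^{-1}uw)(A_J)=w^{-1}\bigl(u(w(A_J))\bigr)$, and applying the bijection $w$ shows that $(w^{-1}uw)(A_J)=A_J$ is equivalent to $u(w(A_J))=w(A_J)$, which is exactly the condition $u\in\stab(w(A_J))$. Chaining these equivalences gives $u\in wW_Jw^{-1}$ if and only if $u\in\stab(w(A_J))$, hence the desired equality of subgroups.

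There is essentially no obstacle here; the only point requiring a word of care is that $\stab$ denotes the setwise (not pointwise) stabilizer and that the elements of $W$ act as honest permutations of $[-n,n]$, so that $w(A_J)$ makes sense as a subset and $w$ restricts to a bijection of $[-n,n]$. Note also that the maximality and standardness of $W_J$ play no role in the argument: the statement is valid for the stabilizer of any subset, and here serves only to record the precise form of the conjugates of the maximal parabolics described just before the statement. Once the definitions recalled above are in place, the computation is purely formal.
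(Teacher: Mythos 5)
Your proposal is correct, and it simply spells out the elementary conjugation argument that the paper considers obvious (its entire proof reads ``Clear''): since $W_J=\stab(A_J)$ exactly, for any $u\in W$ one has $u\in wW_Jw^{-1}$ if and only if $w^{-1}uw$ preserves $A_J$, which is equivalent to $u$ preserving $w(A_J)$. There is nothing to add; your observation that maximality and standardness are irrelevant to the computation is also accurate.
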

\begin{proof}
Clear.
\end{proof}


\begin{proof}[Proof of Proposition~\ref{Prop:AppB}]
It is enough to show that $W_I \cap wW_Jw^{-1} \neq \{ \idop \}$ for $I,J \subseteq S$ with $\vert I \vert = \vert J \vert = n-1$ and $w \in W$. Consider the intersections $A_I \cap w(A_J)$ and $A_I \cap -w(A_J)$. If one of these intersections contains at least two elements, say $k$ and $l$, then $(k,l)(-k,-l) \in W_I \cap wW_Jw^{-1}$ since $A_I\cap (-A_I)=\varnothing=w(A_J)\cap(-w(A_J))$. 
Therefore we can assume that $\vert A_I \cap w(A_J) \vert \leq 1$ and $\vert A_I \cap -w(A_J) \vert \leq 1$. 
Now if $|A_I|\geq 4$, then $|A_I\cap w(A_J)^0|\geq 2$, and since $A_I\cap(-A_I)=\varnothing$ it follows that there exist $k,\ell\in A_I\cap w(A_J)^0$ with $k\neq\pm\ell$, and we then have that $(k,\ell)(-k,-\ell)\in W_I \cap wW_Jw^{-1}$. Hence we can furthermore assume that $|A_I|<4$. It follows that $|A_I^0|\geq 2n-6$. 

But arguing similarly we can also assume that $|A_I^0\cap w(A_J)^0|<4$, hence $|A_I^0\cap w(A_J)^0|\leq 2$ since it has to be even and $|A_I^0\cap w(A_J)|\leq 1$. It follows that $|A_I^0|\leq 4$. Together with the inequality above we get $2n-6\leq |A_I^0|\leq 4$, hence $n\leq 5$.

\end{proof}

\section{The proof of Theorem~\ref{th:maintheorem1}}\label{sec:proof}

\noindent The aim of this section is to prove the sufficient condition for the transitive Hurwitz action as stated in Theorem \ref{th:maintheorem1}. The proof is a case-by-case analysis. 

Let $(W,T)$ be a finite dual Coxeter system and let $w\in W$. For $(t_1, \ldots , t_m), (r_1, \ldots, r_m) \in \Red_T(w)$ we write $(t_1, \ldots , t_m) \sim (r_1, \ldots, r_m)$ if both factorizations lie in the same Hurwitz orbit. Furthermore note that if $w$ is a parabolic quasi-Coxeter element in $(W,T)$, then each conjugate of $w$ is also a parabolic quasi-Coxeter element in $(W,T)$.  Since the Hurwitz operation commutes with conjugation, we can restrict ourselves to check transitivity for one representative of each conjugacy class of parabolic quasi-Coxeter elements of $W$. The proof of the following is easy:

\begin{Lemma}\label{product}
Let $(W_i, T_i)$, $i=1,2$ be dual Coxeter systems and let $w_i\in W_i$, $i = 1,2$. Then $(W_1 \times W_2, T:=(T_1 \times\{1\}) \cup (\{1\}\times T_2))$ is a dual Coxeter system. Furthermore, if the Hurwitz action is transitive
on $\Red_{T_i}(w_i)$, $i = 1,2$, then the Hurwitz action is transitive on 
$\Red_{T}( (w_1, w_2) )$.
\end{Lemma}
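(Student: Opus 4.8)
The plan is to prove the two assertions in turn; the first is routine and the second is the substance of the lemma.

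For the claim that $(W_1\times W_2,T)$ is a dual Coxeter system, I would fix simple systems $S_i\subseteq T_i$ for $(W_i,T_i)$ and set $S:=(S_1\times\{1\})\cup(\{1\}\times S_2)$. It is standard that the direct product of two Coxeter systems is again a Coxeter system with simple system $S$, so it only remains to identify its reflection set. Since conjugation in a direct product is componentwise, $(g_1,g_2)(s,1)(g_1,g_2)^{-1}=(g_1 s g_1^{-1},1)$, the conjugates of $S_1\times\{1\}$ are exactly $T_1\times\{1\}$, and symmetrically the conjugates of $\{1\}\times S_2$ are $\{1\}\times T_2$. Hence the reflection set of $(W_1\times W_2,S)$ is precisely $T$, as required.

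For the transitivity, write $w=(w_1,w_2)$ and call a reflection in $T_1\times\{1\}$ of \emph{type $1$} and one in $\{1\}\times T_2$ of \emph{type $2$}. The first step is to record that in any $T$-decomposition $w=t_1\cdots t_k$, projecting to each coordinate shows that the type-$1$ entries (read in order) form a $T_1$-decomposition of $w_1$ and the type-$2$ entries a $T_2$-decomposition of $w_2$; conversely, concatenating reduced decompositions of $w_1$ and $w_2$ gives a $T$-decomposition of $w$. Setting $m_1:=\ell_{T_1}(w_1)$ and $m_2:=\ell_{T_2}(w_2)$, this yields $\lt(w)=m_1+m_2=:m$, and moreover every \emph{reduced} $T$-decomposition of $w$ contains exactly $m_1$ reflections of type $1$ and $m_2$ of type $2$.

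The key observation for the Hurwitz action is that a type-$1$ and a type-$2$ reflection commute, so if $t_i,t_{i+1}$ have different types then $t_it_{i+1}t_i=t_{i+1}$ and $\sigma_i$ merely transposes them. The plan is then: (i) use such transpositions to sort any reduced decomposition into the form $(a_1,\ldots,a_{m_1},b_1,\ldots,b_{m_2})$ with all type-$1$ entries first; (ii) observe that $\sigma_1,\ldots,\sigma_{m_1-1}$ act on the first block exactly as the Hurwitz action on $\Red_{T_1}(w_1)$, since a product $t_it_{i+1}t_i$ of type-$1$ reflections stays in $T_1\times\{1\}$, so by the assumed transitivity on $\Red_{T_1}(w_1)$ the first block can be carried to a fixed $(a_1^0,\ldots,a_{m_1}^0)$; (iii) symmetrically carry the second block to a fixed $(b_1^0,\ldots,b_{m_2}^0)$ using $\sigma_{m_1+1},\ldots,\sigma_{m-1}$. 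Since every reduced decomposition is thereby sent to the single tuple $(a_1^0,\ldots,a_{m_1}^0,b_1^0,\ldots,b_{m_2}^0)$, transitivity follows. The only point requiring care — the closest thing to an obstacle — is checking that the Hurwitz moves genuinely respect the type structure: that crossing types is exactly what $\sigma_i$ does on commuting reflections, and that intra-block moves keep each block inside its factor. Both are immediate from the componentwise description of conjugation, consistent with the remark that the proof is easy.
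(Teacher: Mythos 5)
Your proof is correct, and it is the standard argument the paper has in mind when it says the proof is easy (the paper itself omits it entirely): cross-type reflections commute so the Hurwitz moves act as plain transpositions, allowing you to sort any reduced decomposition into a type-$1$ block followed by a type-$2$ block, after which transitivity in each factor finishes the job. You also correctly handle the one point that genuinely needs checking, namely that $\ell_T\bigl((w_1,w_2)\bigr)=\ell_{T_1}(w_1)+\ell_{T_2}(w_2)$ and hence that each block of a sorted reduced decomposition is itself reduced in its factor.
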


\subsection{Types $A_n$, $B_n$ and $I_2(m)$}\label{sub:ABI}

In all these cases every parabolic quasi-Coxeter element is already a parabolic Coxeter element
by Lemmas~\ref{QuasiCoxIsCoxA}, \ref{lem:coxeterb} and Corollary~\ref{QuasiCoxIsCoxI}.
Therefore the assertion follows with Theorem~1.3 of \cite{BDSW14}. 

\begin{remark}
Notice that in types $A_n$ and $B_n$, parabolic quasi-Coxeter elements, classical parabolic Coxeter elements and parabolic Coxeter elements coincide. In type $I_2(m)$, parabolic quasi-Coxeter elements and parabolic Coxeter elements coincide, but parabolic Coxeter elements and classical parabolic Coxeter elements do not (see Remark \ref{rmk:coxeter} $(c)$).
\end{remark}

\subsection{The simply laced types.}

We now treat the parabolic quasi-Coxeter elements in an irreducible, finite, simply laced dual Coxeter system $(W, T)$ of rank $n$. As we already dealt with the type $A_n$, it remains to consider the types $D_n$, $n\geq 4$ and $E_6, E_7, E_8$.

\medskip

We only need to show the assertion for quasi-Coxeter elements. Indeed, let $w$ be a parabolic quasi-Coxeter element in $(W,T)$ and let $(t_1, \ldots , t_m) \in \Red_T(w)$. Then 
$W^\prime = \langle t_1, \ldots, t_m \rangle$ is by Theorem \ref{ThmA} a parabolic subgroup of $(W,T)$, in fact $W^\prime = P_w$.

Therefore, it follows from Lemma \ref{lem:inclusionfixed} that all the reflections in any reduced factorization
of $w$ are in $W^\prime$. The latter group is a direct product of irreducible Coxeter groups of simply laced type.
If we know that the Hurwitz action  is transitive on $\Red_T(\tilde{w})$ for all the quasi-Coxeter elements $\tilde{w}$ in the irreducible Coxeter groups of simply laced type, then the Hurwitz action on $\Red_T(w)$ is transitive as well by Lemma~\ref{product}.

\medskip

The strategy to prove the theorem is as follows: we first show by induction on the rank $n$ (with $n\geq 4$) that the Hurwitz action is transitive on the set of reduced decompositions of quasi-Coxeter elements of type $D_n$; for this we will need to use the result for parabolic subgroups, but since they are (products) of groups of type $A$ with groups of type $D$ of smaller rank, the result holds for groups of type $A$ by Subsection \ref{sub:ABI} and they hold for groups of type $D_k$, $k<n$ by induction. 

Using the fact that it holds for type $D_n$, $n\geq 4$, we then prove the result for the groups $E_6$, $E_7$ and $E_8$. Similarly as for type $D_n$, parabolic subgroups of type $E$ are of type $A$, $D$ or $E$. It was previously shown to hold for types $A$ and $D$ and holds for type $E$ by induction. 

We then prove by computer that the result holds for  the remaining exceptional groups. 

Let $w$ be a quasi-Coxeter element and let $(t_1,  \ldots , t_n) \in \Red_T(w)$. 

\medskip

\subsubsection{Type $D_n$}
For types $D_4$ and $D_5$ the assertion is checked directly using \cite{GAP2015}. Therefore assume  $n \geq 6$. Let $(r_1, \ldots ,r_n) \in \Red_T(w)$ be a second reduced factorization of $w$. By Theorem \ref{ThmA} and Theorem \ref{thm:quasi-CoxeterParabolic} the groups $\langle t_1, \ldots , t_{n-1} \rangle$ and $\langle r_1, \ldots , r_{n-1} \rangle$ are maximal parabolic subgroups and 
since $\lt(w t_n)=n-1=\lt(wr_n)$ it follows that $C_W(V^{wt_n})=P_{wt_n}=\langle t_1, \ldots , t_{n-1} \rangle$, $C_W(V^{wr_n})=P_{wr_n}=\langle r_1, \ldots , r_{n-1} \rangle$. By Proposition \ref{Prop:AppB} there exists a reflection $t$ in their intersection. It follows by Lemma~\ref{lem:inclusionfixed} that $t <_T wt_n, wr_n$. Hence there exists $t_2',\dots, t_{n-1}',r_2',\dots, r_{n-1}'\in T$ such that $(t,t_2',\ldots , t_{n-1}') \in \Red_T(wt_{n})$ and $(t,r_2', \ldots, r_{n-1}') \in \Red_T(wr_{n})$. In particular we get 
$$(t_2',\ldots , t_{n-1}',t_n), (r_2', \ldots, r_{n-1}',r_n) \in \Red_T(tw).$$
By Theorem~\ref{thm:quasi-CoxeterParabolic} the element $tw$ is 
quasi-Coxeter in the parabolic subgroup $$P_{tw}= \langle t_2',\ldots , t_{n-1}',t_n \rangle.$$ It follows from Lemma \ref{lem:inclusionfixed} that the reflections $r_2', \ldots, r_{n-1}',r_n$ are in $P_{tw}$ since $r_i'<_{T} tw$ for each $i$. As $P_{tw}$ is a direct product of irreducible Coxeter groups of type $A$ and $D$ of smaller rank, we have by induction together with Lemma~\ref{product} that
\begin{align*}
(t_2^\prime, \ldots, t_{n-1}^\prime,t_n) \sim (r_2^\prime, \ldots , r_{n-1}^\prime,r_n),
\end{align*}

as well as 

\[ (t,t_2^\prime, \ldots , t_{n-1}^\prime) \sim (t_1, \ldots, t_{n-1})~\mbox{and}~(t,r_2^\prime, \ldots, r_{n-1}^\prime)
 \sim (r_1, \ldots, r_{n-1}).\]

This implies 

\begin{align*}
(t_1, \ldots, t_n) \sim (t,t_2^\prime, \ldots , t_{n-1}^\prime,t_n) \sim (t,r_2^\prime, \ldots, r_{n-1}^\prime, r_n) \sim (r_1, \ldots, r_n)\in\Red_T(w),
\end{align*}

which concludes the proof.

\subsubsection{Types $E_6, E_7$ and $E_8$}

We checked using \cite{GAP2015} that there is a reduced factorization $(t_1, \ldots , t_n)$ of the quasi-Coxeter element $w$ such that for every reflection $t$ in $T$
 there exists $(t_1^\prime, \ldots , t_{n-1}^\prime, t) \in \Red_T(w)$ with $(t_1, \ldots ,t_n) \sim (t_1^\prime, \ldots , t_{n-1}^\prime, t)$.

\medskip

Let $(r_1, \ldots , r_n) \in \Red_T(w)$. By our computations in GAP there exists
$(t_1^\prime, \ldots , t_{n-1}^\prime, r_n) \in \Red_T(w)$ with 
$(t_1, \ldots , t_n) \sim (t_1^\prime , \ldots , t_{n-1}^\prime, r_n)$. 
Then
\begin{align*}
 wr_n = t_1^\prime \cdots t_{n-1}^\prime = r_1 \cdots r_{n-1}
\end{align*}
are reduced factorizations. Furthermore $wr_n$ is a quasi-Coxeter element in $(W^\prime, T^\prime)$ where 
$W^\prime := \langle t_1^\prime , \ldots , t_{n-1}^\prime \rangle$ and $T^\prime := T \cap W'$. By Theorem~\ref{thm:quasi-CoxeterParabolic} we have that $W^\prime$ is a equal to the parabolic closure $P_{t_1'\cdots t_{n-1}'}$ of $t_1'\cdots t_{n-1}'$
and therefore $r_1, \ldots , r_{n-1} \in W^\prime$ by Lemma \ref{lem:inclusionfixed}. Thus
$(t_1^\prime, \ldots , t_{n-1}^\prime)$ and $(r_1 , \ldots , r_{n-1})$ are reduced factorizations of a quasi-Coxeter element in a dual, simply laced Coxeter system of rank $n-1$.
By induction and by Lemma~\ref{product} we get
$(t_1^\prime, \ldots , t_{n-1}^\prime) \sim (r_1 , \ldots , r_{n-1})$, thus
\begin{align*}
 (t_1, \ldots , t_{n})  \sim (t_1^\prime , \ldots , t_{n-1}^\prime, r_n)\sim (r_1 , \ldots , r_{n}).
\end{align*}

\subsection{The types $F_4$, $H_3$ and $H_4$}
For these cases Theorem \ref{th:maintheorem1} was checked directly using \cite{GAP2015}.



  \end{document}